\documentclass{amsart}

\usepackage{etex}
\usepackage{amsmath,amssymb,amsthm,amsfonts,mathrsfs}
\usepackage[frame,cmtip,arrow,matrix,line,graph,curve]{xy}
\usepackage{graphpap,color,paralist,pstricks}
\usepackage[mathscr]{eucal}
\usepackage{mathabx}
\usepackage[pdftex,colorlinks,backref=page,citecolor=blue]{hyperref}
\usepackage{tikz}
\usetikzlibrary{calc,decorations.markings}
\usepackage{epic,eepic}
\usepackage{yfonts}
\usepackage{enumitem}
\usepackage{bbm}
\usepackage{tikz-cd}

\newtheorem{theorem}{Theorem}[section]
\newtheorem{lemma}[theorem]{Lemma}

\allowdisplaybreaks[4]

\theoremstyle{definition}

\newtheorem{proposition}[theorem]{Proposition}

\theoremstyle{remark}
\newtheorem{remark}[theorem]{Remark}

\numberwithin{equation}{section}

\begin{document}
\UseRawInputEncoding

\title[\hfilneg \hfil
Blowup dynamics for equivariant critical LL flow]{Blowup dynamics for equivariant critical Landau--Lifshitz flow}
\author{Fangyu Han}
\address{School of Mathematical Sciences, Xiamen University, Xiamen 361005, People's Republic of China}
\email{fangyuh88@163.com}
\thanks{Corresponding author: Z. Tan (tan85@xmu.edu.cn)}
\author{Zhong Tan}
\address{School of Mathematical Sciences, Xiamen University, Xiamen 361005, People's Republic of China}
\address{Shenzhen Research Institute of Xiamen University, Shenzhen 518057, People's Republic of China}
\email{tan85@xmu.edu.cn}
\subjclass[2010]{ Primary 35Q55, 35Q60, 35B44; Secondary 35K45, 82D40, 58J35}
\keywords{Landau--Lifshitz flow; equivariant solution; critical energy; blowup dynamics}

\begin{abstract}
The existence of finite time blowup solutions for the two-dimensional Landau--Lifshitz equation is a long-standing problem, which exists in the literature at least since 2001 (E, Mathematics Unlimited--2001 and Beyond, Springer, Berlin, P.410, 2001). A more refined description in the equivariant class is given in (van den Berg and Williams, European J. Appl. Math., 24(6), 912--948, 2013). In this paper, we consider the blowup dynamics of the Landau--Lifshitz equation
$$
\partial_tu=\mathfrak{a}_1u\times\Delta u-\mathfrak{a}_2u\times(u\times\Delta u),\quad x\in\mathbb{R}^2,
$$
where $u\in\mathbb{S}^2$, $\mathfrak{a}_1+i\mathfrak{a}_2\in\mathbb{C}$ with $\mathfrak{a}_2\geq0$ and $\mathfrak{a}_1+\mathfrak{a}_2=1$. We prove the existence of 1-equivariant Krieger--Schlag--Tataru type blowup solutions near the lowest energy steady state. More precisely, we prove that for any $\nu>1$, there exists a 1-equivariant finite-time blowup solution of the form
$$
u(x,t)=\phi(\lambda(t)x)+\zeta(x,t),\quad \lambda(t)=t^{-1/2-\nu},
$$
where $\phi$ is a lowest energy steady state and $\zeta(t)$ is arbitrary small in $\dot{H}^1\cap\dot{H}^2$. The proof is accomplished by renormalizing the blowup profile and a perturbative analysis in the spirit of (Krieger, Schlag and Tataru, Invent. Math., 171(3), 543--615, 2008), (Perelman, Comm. Math. Phys., 330(1), 69--105, 2014) and (Ortoleva and Perelman, Algebra i Analiz, 25(2), 271--294, 2013).
\end{abstract}

\maketitle

\section{Introduction and main result}\label{Sec1}
\subsection{Introduction}
The Landau--Lifshitz flow from the $m$-dimensional Riemannian manifold $(\mathcal{M},g)$ to the two-sphere $\mathbb{S}^2$ is given by
\begin{equation}\label{IM1}
\begin{cases}
\partial_t u=\mathfrak{a}_1u\times\Delta_{\mathcal{M}} u-\mathfrak{a}_2u\times(u\times\Delta_{\mathcal{M}} u), \quad x\in\mathcal{M},\,\, t\in\mathbb{R},\\
u|_{t=0}=u_0\in\mathbb{S}^2, \quad x\in\mathcal{M},
\end{cases}
\end{equation}
where $\mathfrak{a}_1+i\mathfrak{a}_2\in\mathbb{C}$ with $\mathfrak{a}_2\geq0$ and  $\mathfrak{a}_1+\mathfrak{a}_2=1$, $u=(u_1,u_2,u_3)$ is a three-dimensional vector with normalized length that satisfies $u(x,t):\mathcal{M}\times\mathbb{R}\rightarrow\mathbb{S}^2$, $g=|\det(g_{ij})|$ is the Riemann metric, $\Delta_{\mathcal{M}}$ is the Laplace--Beltrami operator defined by $\Delta_{\mathcal{M}}u=\frac{1}{\sqrt{g}} \partial_{x_i}(g^{ij}\sqrt{g}\partial_{x_j}u)$, where $(g^{ij})$ is the inverse of $(g_{ij})$. This is an important model first developed by Landau and Lifshitz \cite{LL35} to model the effects of magnetic fields on ferromagnetic materials and to describe the evolution of continuous spin fields in ferromagnets.

In fact, the Landau--Lifshitz flow is closely related to some other important geometric flows, for instance, the harmonic map heat flow and the Schr\"odinger map flow.

\subsubsection{Harmonic heat flow}
When $\mathfrak{a}_1=0$ and $\mathfrak{a}_2=1$, \eqref{IM1} becomes a parabolic harmonic heat flow:
\begin{equation}
\text{(Harmonic heat flow)}\quad
\begin{cases}
\partial_tu=\Delta_{\mathcal{M}} u+|\nabla u|^2u,\quad x\in\mathcal{M},\, t\in\mathbb{R},\\
u|_{t=0}=u_0,\quad x\in\mathcal{M},
\end{cases}
\end{equation}
where $u(x,t)\in \mathbb{S}^2$ and $|\nabla u|^2=\sum_{i,j} \sum_kg^{ij} \partial_{x_i}u_k \partial_{x_j} u_k$. This is an important model in liquid crystal flow and ferromagnetism (see, e.g.,  \cite{BBCH92}\cite{BHK03}). In addition, it is also related to the harmonic map. The harmonic map $u$ satisfies the Euler--Lagrange equation: $\Delta_{\mathcal{M}} u+|\nabla u|^2u=0$, the theory of which was first established in 1964 by Eells and Sampson \cite{ES64}, who proved that any map can be deformed into a harmonic map in a certain geometric context.

When $\mathcal{M}$ is a Riemann surface, Struwe \cite{Struwe85} proved the existence and uniqueness of weak solutions with at most finitely many singularities. For a further extension of this conclusion and for the higher dimensional case, see \cite{Freire95}\cite{CS89}\cite{Struwe88}. Chang, Ding and Ye \cite{CDY92} constructed the first example of finite-time blowup solutions for the harmonic heat flow.  For the case where the initial value is defined on $\mathbb{D}^2 \subset \mathbb{R}^2$ and the target manifold is $\mathbb{S}^2$, van den Berg, Hulshof and King \cite{BHK03} used formal asymptotic analysis to predict the existence of blowup solutions with quantifiable blowup rate
$$
\lambda_L(t)\approx C\frac{|T-t|^L}{|\ln (T-t)|^{\frac{2L}{2L-1}}},\quad L\in\mathbb{N}^*.
$$
Since the heat flow in two dimensions is energy critical, the formation of singularity by energy concentration is possible. It is well known that concentration implies non-trivial harmonic map of bubbles at a finite number of blowup points, see for instance  \cite{CG89}\cite{CD90}\cite{DT95}\cite{LW98}\cite{Qing95}\cite{QT97} \cite{Struwe85}\cite{Topping04}\cite{Wang96} for more details. For the case of $\mathcal{M}=\mathbb{R}^2$, Gustafson, Nakanish and Tsai \cite{GNT10} proved that the asymptotical stability of the $k$-equivariant harmonic map for $k\geq3$ and gave a class of infinite-time equivariant blowup solutions near the $2$-equivariant harmonic map. Rapha\"el and Schweyer \cite{RS13}\cite{RS14} have selected a family of initial values which are arbitrarily close to the lowest energy harmonic map under the energy critical topology, and proved that the corresponding solutions blowup in finite time with rate  $\lambda_L(t)$, where $L\geq1$ is arbitrary. The case of $L=1$ corresponds to a stable regime. When there is no assumption of symmetry, D\'avila, del Pino and Wei \cite{DPW20} construct a solution in a bounded region in $\mathbb{R}^2$, which blowup exactly at pre-given finite number of points, at each of which the blowup profile is close to the asymptotic singularity expansion of the 1-corotational harmonic map and the blowup rate is $\lambda_L(t)$ with $L=1$. This rate is similar to that expected in 1-corrotational heat flow, see \cite{BHK03}. For the existence and uniqueness results, please refer to \cite{Chang89}\cite{CD90}\cite{ES64}\cite{LW98}\cite{LW08} and the references therein.

\subsubsection{Schr\"odinger map flow}
When $\mathfrak{a}_1=1$ and $\mathfrak{a}_2=0$, \eqref{IM1} becomes the Schr\"odinger map flow, which is a fundamental content in differential geometry, see\cite{CSU00}\cite{Ding02}\cite{DW98}\cite{TU06}. By the action of the complex structure $u\times$, the Schr\"odinger map can be written as
\begin{equation}
\text{(Schr\"odinger map)}\quad
\begin{cases}
u\times\partial_t u=-\Delta u -|\nabla u|^2u,\quad x\in\mathcal{M},\, t\in\mathbb{R},\\
u|_{t=0}=u_0,\quad x\in\mathcal{M},
\end{cases}
\end{equation}
where $u(x,t)\in \mathbb{S}^2$.

The local well-posedness of Schr\"odinger map can be found in \cite{DW01}\cite{McGahagan07}\cite{SSB86}. When the target manifold is $\mathbb{S}^2$, Bejenaru et al. \cite{BIKT11} proved the global well-posedness with small data in the critical space. Their results were generalized by Li \cite{Li21}\cite{Li22} to the case of K\"ahler manifold target. The static solution of the Schr\"odinger flow is a harmonic map. When the energy is less than $4\pi$, the 1-equivariant solutions are global in time and scattering (see \cite{BIKT13}). Gastafson et al. \cite{GKT08}\cite{GKT07}\cite{GNT10} proved that the harmonic map is asymptotically stable with respect to the Schr\"odinger map in the $k$-equivariant class for $k\geq3$, which shows that such solutions do not blowup near the harmonic map. The case of $k=2$ is still an important open problem. However, in the 1-equivariant class, Bejenaru and Tataru \cite{BT14} proved that the harmonic map is stable under a smooth well-localized perturbation, but unstable in the $\dot{H}^1$ topology. Merle, Rapha\"el and Rodnianski \cite{MRR13} proved the existence of a codimension one set of smooth well localized initial data arbitrarily close to the ground state harmonic map, which generates finite time type II blowup solutions. They also gave a sharp description of the corresponding singularity formation. Perelman \cite{Perelman14} proved the existence of another class type II blowup solutions with a different blowup behavior. For more results on the global well-posedness of solutions near the ground state, see \cite{Bejenaru08}\cite{BIK07}\cite{BIKT11}\cite{BT14}\cite{IK07} and the references therein.

\subsubsection{Landau--Lifshitz flow}
Landau--Lifshitz flow \eqref{IM1} was first proposed in the study of classically continuous isotropic Heisenberg ferromagnetic chains. It describes the evolution of magnetic moments in classical ferromagnetic and anti-ferromagnetic chains, which is an important basis for understanding the non-stationary magnetism (see, e.g., \cite{LL35}\cite{Wijn66}).

For the global existence and partial regularity of weak solutions, see for instance \cite{AS92}\cite{CF01}\cite{GH93}\cite{Ko05}\cite{Melcher05}\cite{Wang06}. In particular, when $\mathcal{M}$ is a Riemannian surface, Guo and Hong \cite{GH93} proved uniqueness of weak solutions and regularity except for at most finitely many points. When $\mathcal{M}=\mathbb{R}^2$, Ko \cite{Ko05} constructed a smooth solution away from a two-dimensional locally finite Hausdorff measure set by using the discretization approximation method. In general, for the high-dimensional weak solutions, we expect a better partial regularity results (i.e., no further assumptions of regularity or minimal energy), for example, there is a well-known example constructed by Rivi\`{e}re \cite{Riviere95}: There exists a weak harmonic map from the ball $B^3\subset\mathbb{R}^3$ to $\mathbb{S}^2$, whose singular set is the closure of the ball $\overline{B^3}$, and this conclusion also holds in the higher dimensions. Following the idea of this example, Chen and Struwe \cite{CS89} proved the existence of partially regular solutions for high-dimensional harmonic heat flows, and Melcher \cite{Melcher05} proved that the existence of global weak solutions for the Landau--Lifshitz flow in $\mathbb{R}^3$, where the singular set has finite three-dimensional parabolic Hausdorff measure. Wang \cite{Wang06} generalized Melcher's result to the case of $\mathcal{M}=\mathbb{R}^m$, $m\leq4$. If an assumption on the stability of weak solutions is attached, Moser \cite{Moser02} obtained a better estimate for the singular set.

Although the Landau--Lifshitz flow has been studied extensively, there are few studies on its dynamical behavior. In the $m$-equivariant class ($m\geq3$), Gustafson, Nakanishi and Tsai \cite{GNT10} proved the stability of the harmonic map for Landau--Lifshitz flow. In the 1-equivariant class, Li and Zhao \cite{LZ17} proved that the solutions with energy less than $4\pi$ converge to a constant map in the energy space. van den Berg and Williams \cite{BW13} obtained equivariant blowup solutions by formal expansion and verified them experimentally, but as the author stated in \cite{BW13}: ``mathematically rigourous justification is required''. The blowup dynamics of the 1-equivariant Landau--Lifshitz equation near the equivariant harmonic map is an important open problem. In 2020, Xu and Zhao \cite{XZ20} proved the existence of a codimension one set of smooth well localized initial data arbitrarily close to the ground state harmonic map, which generates finite time type II blowup 1-equivariant solutions. They also gave a sharp description of the corresponding singularity formation. Recently, based on the inner-outer gluing method and the distorted Fourier transform, Wei, Zhang and Zhou \cite{WZZ} constructed a finite-time blowup solution in $\mathbb{R}^2$ without any symmetry.

The purpose of this paper is to consider the Landau--Lifshitz flow \eqref{IM1} with $\mathcal{M}=\mathbb{R}^2$, and to prove the existence of type II blowup solutions in the 1-equivariant class. This blowup solution has a continuous blowup rate, therefore, it different from that constructed in \cite{XZ20} and \cite{WZZ}.

\subsection{Model and main result}
\subsubsection{Setting of the problem}
In this paper, we consider the initial value problem of the Landau--Lifshitz flow from $\mathbb{R}^2$ to $\mathbb{S}^2$:
\begin{equation}\label{SM}
\begin{cases}
\partial_tu=\mathfrak{a}_1u\times\Delta u-\mathfrak{a}_2u\times(u\times\Delta u),\quad x=(x_1,x_2)\in\mathbb{R}^2,\,\, t\in\mathbb{R},\\
u|_{t=0}=u_0,
\end{cases}
\end{equation}
where $u(t,x)=(u_1(x,t),u_2(x,t),u_3(x,t))\in\mathbb{S}^2\subset\mathbb{R}^3$ and  $\mathfrak{a}_1+i\mathfrak{a}_1\in\mathbb{C}$ with $\mathfrak{a}_2\geq0$ and $\mathfrak{a}_1+\mathfrak{a}_2=1$.

Equation \eqref{SM} conserves the energy
\begin{equation}\label{energy}
E(u)=\frac{1}{2}\int_{\mathbb{R}^2}|\nabla u(x,t)|^2dx.
\end{equation}
The two-dimensional problem \eqref{SM} is critical in the sense that \eqref{energy} is invariant with respect to the scaling $u(x,t)\rightarrow u(\lambda x,\lambda^2t)$, where $\lambda\in\mathbb{R}_+=\{x\in\mathbb{R}|x>0\}$.

For a finite energy map $u: \mathbb{R}^2\rightarrow\mathbb{S}^2$, we can define its topological degree as
$$
\deg(u)=\frac{1}{4\pi}\int_{\mathbb{R}^2}u_{x_1}\cdot J_uu_{x_2}dx,
$$
where $J_u$ is a complex structure on $\mathbb{S}^2$ defined by
$$
J_uv=u\times v,\quad v\in\mathbb{R}^3.
$$
According to \eqref{energy}, we get
\begin{equation}\label{Edeg}
E(u)\geq 4\pi|\deg(u)|,
\end{equation}
where the equality is achieved at the harmonic map $\phi_m$ (see \cite{GNT10}):
\begin{equation}\label{HHH13}
\begin{split}
&\phi_m(x)=e^{m\theta R}Q^m(r),\quad Q^m=\left(h_1^m,0,h_3^m\right)\in\mathbb{S}^2,\\
&h_1^m(r)=\frac{2r^m}{r^{2m}+1},\quad h_3^m(r)=\frac{r^{2m}-1}{r^{2m}+1}.
\end{split}
\end{equation}
Here $m\in\mathbb{Z}^+$, $(r,\theta)$ is the polar coordinate in the plane $\mathbb{R}^2: x_1+ix_2=e^{i\theta}r$ and $R$ is the generator of horizontal rotations:
$$
R=\left(
    \begin{array}{ccc}
      0 & -1 & 0 \\
      1 & 0 & 0 \\
      0 & 0 & 0 \\
    \end{array}
  \right),
$$
which can also be equivalently written as
$$
Ru=\mathbf{k}\times u,\quad \mathbf{k}=(0,0,1).
$$
A direct calculation gives
$$
\deg(\phi_m)=m,\quad E(\phi_m)=4\pi m.
$$
Up to the symmetries, $\phi_m$ are the only energy minimizer in their homotogy class.

Since $\phi_1$ is crucial in the rest of this paper, we write $\phi=\phi_1$, $Q=Q_1$, $h_1=h_1^1$ and $h_3=h_3^1$.

\subsubsection{Main result}
Based on reformatting blowup profile and perturbation method, Krieger, Schlag and Tataru \cite{KST08} proved that the energy of solutions for the equivariant critical wave map concentrates in the cuspidal region:
$$
0\leq t\lesssim \frac{1}{\lambda_0(t)},\quad\lambda_0(t)=\frac{1}{t^{1+\nu_0}},\,\,\nu_0>\frac{1}{2},
$$
thus they obtained a class of solutions that blowup at $r=t=0$. We call this type of blowup solutions as Krieger--Schlag--Tataru type blowup solutions. The aim of this paper is to prove that \eqref{SM} also exists 1-equivariant Krieger--Schlag--Tataru type blowup solutions, where the initial data of the form
$$
u_0=\phi+\zeta_0,
$$
where $\zeta_0$ is 1-equivariant and is arbitrarily small in $\dot{H}^1\cap \dot{H}^3$.

The main result of this paper is as follows.
\begin{theorem}[Existence of the Krieger--Schlag--Tataru type blowup solution]\label{MT}
For any $\nu>1$ and $\alpha_0\in\mathbb{R}$, let $\delta>0$ be sufficiently small, then there exists $t_0>0$ such that \eqref{SM} exists a 1-equivariant solution $u\in C((0,t_0],\dot{H}^1\cap\dot{H}^3)$ of the form:
\begin{equation}\label{T1}
u(x,t)=e^{\alpha(t)R}\phi\left(\lambda(t)x\right)+\zeta(x,t),
\end{equation}
where
\begin{equation}\label{T2}
\lambda(t)=t^{-\frac{1}{2}-\nu},\quad \alpha(t)=\alpha_0\ln t,
\end{equation}
\begin{equation}\label{T3}
\|\zeta(t)\|_{\dot{H}^1\cap\dot{H}^2}\leq\delta,\quad \|\zeta(t)\|_{\dot{H}^3}\leq C_{\nu,\alpha_0}\frac{1}{t},\quad \forall t\in(0,t_0].
\end{equation}
Furthermore, $\zeta(t)\rightarrow\zeta^*$ in $\dot{H}^1\cap\dot{H}^2$ as $t\rightarrow0$, where $\zeta^*\in H^{1+2\nu-}$, $\nu-$ means any positive number less than $\nu$.
\end{theorem}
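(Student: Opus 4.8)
The plan is to follow the renormalization-plus-perturbation philosophy of Krieger--Schlag--Tataru \cite{KST08}, Perelman \cite{Perelman14} and Ortoleva--Perelman, adapted to the mixed dispersive--dissipative operator $\mathfrak a_1u\times\Delta(\cdot)-\mathfrak a_2u\times(u\times\Delta(\cdot))$, the new wrinkle being to run the whole scheme uniformly in the parameter $\mathfrak a_1+i\mathfrak a_2$ with $\operatorname{Re}(\mathfrak a_2-i\mathfrak a_1)=\mathfrak a_2\ge0$. First I would pass to the $1$-equivariant reduction: writing $x=re^{i\theta}$ and factoring out the rotation $e^{(\theta+\alpha(t))R}$, the system \eqref{SM} reduces to a scalar equation for a complex radial profile in which $\phi$ becomes the stationary profile $Q$ and scaling acts by $r\mapsto\lambda r$. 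I would insert the prescribed laws $\lambda(t)=t^{-1/2-\nu}$, $\alpha(t)=\alpha_0\ln t$ by hand and build an approximate solution
$$
u_{\mathrm{app}}(x,t)=e^{\alpha(t)R}\big[\phi+v_1+\cdots+v_N\big](\lambda(t)x)
$$
by an iterative scheme: in the self-similar variable $y=\lambda(t)x$ one solves, order by order in the small parameter $b(t):=-\dot\lambda/\lambda^3=(\tfrac12+\nu)t^{2\nu}$, a hierarchy of linear inhomogeneous problems $\mathcal L v_k=F_k(v_1,\dots,v_{k-1})$, where $\mathcal L$ is the linearization of the Landau--Lifshitz operator at $\phi$; elliptic-type corrections in the inner zone $|y|\lesssim1$ are matched to parabolic/Schr\"odinger-type corrections in the remote zone $|x|\lesssim1$. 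The hypothesis $\nu>1$ makes the iteration converge and renders the final defect $e_N$ small in the weighted norms used below.

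Next I would derive the equation for the remainder. Setting $u=u_{\mathrm{app}}+\zeta$ (after the standard renormalization keeping $u$ valued in $\mathbb S^2$), conjugating out the rotation and rescaling, one obtains an equation of the schematic form
$$
\partial_t\zeta=-(\mathfrak a_2-i\mathfrak a_1)\mathcal H(t)\zeta+\mathcal N(\zeta)+e_N,
$$
where $\mathcal H(t)$ is a time-dependent Schr\"odinger operator $-\Delta+V_{\lambda(t)}$ built from the rescaled potential of $Q$ --- carrying an inverse-square singularity at the origin (the equivariant centrifugal term) and a bounded, fast-decaying part --- $\mathcal N$ is at least quadratic in $\zeta$, and $e_N$ is the defect of $u_{\mathrm{app}}$. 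The structural feature that governs everything is that the $1$-equivariant linearized operator $H=-\Delta+V$ annihilates the dilation generator $\Lambda Q$, which fails to lie in $L^2$ by a logarithm; this threshold resonance is the reason the scaling law must be \emph{prescribed} rather than dynamically modulated, and the burden is shifted onto arranging, in the construction of $u_{\mathrm{app}}$, that the component of $e_N$ along this obstruction be summably small.

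The last step is a bootstrap/contraction in a space controlling $\|\zeta(t)\|_{\dot H^1\cap\dot H^2}\le\delta$ and $\|\zeta(t)\|_{\dot H^3}\lesssim t^{-1}$, with time weights encoding the behavior as $t\to0$. Since $\operatorname{Re}(\mathfrak a_2-i\mathfrak a_1)=\mathfrak a_2\ge0$, the propagator of $-(\mathfrak a_2-i\mathfrak a_1)\mathcal H$ is at worst an isometry (the Schr\"odinger-map case $\mathfrak a_2=0$) and otherwise analytic-smoothing, so one can run $\dot H^1$, $\dot H^2$ and $\dot H^3$ energy identities, commuting derivatives past $\mathcal H$, using the coercivity of $H$ modulo the resonance together with Hardy's inequality and the decay of $V$ to absorb the potential and the time-dependent terms $\dot\lambda/\lambda\sim t^{-1}$, and estimating $\mathcal N$ and $e_N$ by the bootstrap hypotheses. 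Closing these estimates, a contraction (or a compactness argument: solve on $[t_n,t_0]$ with data imposed at $t=t_n$ and let $t_n\to0$) produces $\zeta$ on $(0,t_0]$. Finally the resulting $\dot H^{-1}$-bound on $\partial_t\zeta$ gives convergence $\zeta(t)\to\zeta^*$ in $\dot H^1\cap\dot H^2$, and reading off the leading behavior of $e_N$ (using the smoothing when $\mathfrak a_2>0$, or the oscillatory gain when $\mathfrak a_2=0$) upgrades this to $\zeta^*\in H^{1+2\nu-}$.

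I expect the main obstacle to be exactly the interplay between the first and last steps: constructing $u_{\mathrm{app}}$ to high enough order while respecting the threshold resonance of $H$, and then closing the energy estimates for $\zeta$ in the energy-critical topology --- where the inverse-square potential tail and the zero-energy resonance produce logarithmic losses --- \emph{uniformly} in $\mathfrak a_1+i\mathfrak a_2$, so that a single argument covers both the Schr\"odinger-map limit $\mathfrak a_2=0$ and the harmonic-map-heat-flow limit $\mathfrak a_1=0$.
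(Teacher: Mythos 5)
Your proposal follows essentially the same route as the paper: construct an arbitrarily accurate approximate solution of the prescribed renormalized form by matched asymptotic expansions around the rescaled harmonic map (the paper uses three matched zones --- inner, self-similar, remote --- rather than the two you sketch, the intermediate self-similar zone being where the $\mathfrak{a}_1,\mathfrak{a}_2$-dependence is tamed for the matching), and then control the remainder through $H^1$--$H^3$ energy identities exploiting $\mathfrak{a}_2\geq0$, closing by exactly the compactness scheme you mention, i.e.\ solving forward from data $u^{(N)}(t_n)$ with $t_n\to0$. The only notable discrepancy is minor: the paper obtains $\zeta^*\in H^{1+2\nu-}$ directly from the explicit time-independent remote profile $f_0$ built into the approximate solution, rather than by a smoothing/oscillation upgrade applied to the defect.
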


Here are some comments on the result.
\begin{remark}
Singularity formation of finite energy solutions of two-dimensional Landau--Lifshitz flow \eqref{SM} is an open problem, which was proposed by E (see Subsection 2.1 in \cite{E01}), Ding and Wang (see Remark 1.6 in \cite{DW07}), Guo and Ding (see Preface in \cite{GD08}) and Gustafson, Nakanishi and Tsai (see Section 1 in \cite{GNT10}), etc. For a more refined description of this problem in the equivariant class see \cite{BW13}. In this paper, we prove the existence of a continuous blowup solution for the two-dimensional Landau--Lifshitz flow \eqref{SM} in 1-equivariant class. The idea of proof is based on the reformatting blowup profile and perturbation method, which is very similar to the studies of Krieger, Schlag and Tataru \cite{KST08}, Perelman \cite{Perelman14} and Ortoleva and Perelman \cite{OP13}.
\end{remark}

\begin{remark}
Note that Zhao and Xu \cite{XZ20} proved the existence of finite time 1-equivariant type II blowup solutions with codimension one, and Wei, Zhang and Zhou \cite{WZZ} constructed a finite time type II blowup solution without symmetric assumption. Compared with the results of \cite{XZ20} and \cite{WZZ}, we give a class of 1-equivariant type II blowup solutions with continuous blowup rate. Therefore, our solution has a different singularity regime from theirs.
\end{remark}

\begin{remark}
In fact, similar to the discussion in this paper, the result also holds when $\dot{H}^3$ is replaced by $\dot{H}^{1+2s}$ in Theorem \ref{MT}, where $1\leq s<\nu$.
\end{remark}

\begin{remark}
When $\mathfrak{a}_1=1$ and $\mathfrak{a}_2=0$, \eqref{SM} becomes the Schr\"odinger map flow, in this case Theorem \ref{MT} also holds, see \cite{Perelman14} for more details. However, due to the appearance of $\mathfrak{a}_2\neq0$ and $\mathfrak{a}_1$, the equation behaved as parabolic heat flow property, which is characterized particularly by the corresponding complex coefficients in the profiles of the self-similar and remote regions. This makes it difficult to match the self-similar and remote regions with the inner region. Fortunately, in the self-similar region, we found that the coefficients consisting of $\mathfrak{a}_1$ and $\mathfrak{a}_2$ have some elimination regime. Indeed, we observe that there exists a basis $\{f_j^1,f_j^2\}$ of solutions for equation $(\mathcal{L}-\tilde{\mu}_j)f=0$, and $f_j^1$ and $f_j^2$ have asymptotic expansions at infinity, which do not contain $\mathfrak{a}_1$ and $\mathfrak{a}_2$ in the power of $y$ (see Lemma \ref{lem2.6} in Subsection \ref{subsec2.3}). This allows us to construct solutions in the remote region that match the asymptotic expansion of the solutions in the inner region at the origin (see Subsection \ref{subsec2.4}).
\end{remark}

\subsection{Strategy of the proof}
The proof of Theorem \ref{MT} consists of two steps, which are in Sections \ref{Sec2} and \ref{Sec3}, respectively.

In Section \ref{Sec2}, we construct approximate solutions $u^{(N)}$ that have the form \eqref{T1}, \eqref{T2} and \eqref{T3}, and satisfy \eqref{SM} up to an arbitrary high order error $O(t^N)$.

In Section \ref{Sec3}, by solving a time-forward problem with zero initial value at $t=0$ with respect to the remainder (see Proposition \ref{pro3.1}), we solve the equation \eqref{SM} exactly. The control of the remainder is obtained by energy estimates (see Section \ref{Sec3} for details), where the assumption $\nu>1$ ensures that the approximate solutions we construct belong to $\dot{H}^1\cap\dot{H}^3$, so that we can work in the framework of $H^3$ well-posedness theory.

\section{Approximate solutions}\label{Sec2}

\subsection{Preliminaries and main result of present section}
We consider the 1-equivariant solutions of \eqref{SM}, i.e.,
\begin{equation}\label{E2.1}
u(x,t)=e^{\theta R}v(r,t),\quad v=(v_1,v_2,v_3)\in\mathbb{S}^2\subset\mathbb{R}^3.
\end{equation}
Thus, \eqref{SM} restricted to the 1-equivariant class yields
\begin{equation}\label{E2.2}
v_t=\mathfrak{a}_1v\times\left(\Delta v+\frac{R^2}{r^2}v\right)-\mathfrak{a}_2v\times\left[v\times\left(\Delta v+\frac{R^2}{r^2}v\right)\right],
\end{equation}
and the corresponding energy is
$$
E(u)=\pi\int_0^\infty \left(\left|v_r\right|^2+\frac{v_1^2+v_2^2}{r^2}\right)r dr.
$$
Note that $Q=(h_1,0,h_3)$ is a static solution of \eqref{E2.2} satisfying the following identities:
\begin{equation}\label{E2.3}
\begin{split}
&\partial_rh_1=-\frac{h_1h_3}{r},\quad \partial_r h_3=\frac{h_1^2}{r},\\
&\Delta Q+\frac{R^2}{r^2}Q=\kappa(r)Q,\quad \kappa(r)=-\frac{2h_1^2}{r^2}.
\end{split}
\end{equation}

The main purpose of this section is to prove the following proposition.
\begin{proposition}\label{pro2.1}
For any $\delta>0$ sufficiently small and any $N$ sufficiently large, there is a approximate solution $u^{(N)}:\mathbb{R}^2\times\mathbb{R}^*_+ \rightarrow\mathbb{S}^2$ of \eqref{SM}, where $\mathbb{R}^*_+=\{x\in\mathbb{R}|x\geq0\}$. Moreover, $u^{(N)}$ satisfies the following estimates:
\begin{description}
  \item[(i)] $u^{N}$ is a $C^\infty$ 1-equivariant profile of the form
   \begin{equation}
   u^{N}=e^{\alpha(t)R}\left[\phi(\lambda(t)x)+\chi^{N}(\lambda(t)x,t)\right],
   \end{equation}
   where $\chi^{(N)}(y,t)=e^{\theta R}Z^{(N)}(\rho,t)$, $\rho=|y|$, and $Z^{(N)}$ satisfies that for any $0<t\leq T(N,\delta)$ with some $T(N,\delta)>0$,
      \begin{align}
      &\left\|\partial_\rho Z^{(N)}(t)\right\|_{L^2(\rho d\rho)},\, \left\|\rho^{-1} Z^{(N)}(t)\right\|_{L^2(\rho d\rho)},\, \left\|\rho\partial_\rho Z^{(N)}(t)\right\|_{L^\infty}\leq C\delta^{2\nu},\label{E2.5}\\
      &\left\|\rho^{-l}\partial_\rho^k Z^{(N)}(t)\right\|_{L^2(\rho d\rho)}\leq C\delta^{2\nu-1}t^{\frac{1}{2}+\nu},\quad k+l=2,\label{E2.6}\\
      &\left\|\rho^{-l}\partial_\rho^k Z^{(N)}(t)\right\|_{L^2(\rho d\rho)}\leq Ct^{2\nu},\quad k+l=3,\label{E2.7}\\
      &\left\|\rho\partial_\rho Z^{(N)}(t)\right\|_{L^\infty},\, \left\|\rho^{-1} Z^{(N)}(t)\right\|_{L^\infty}\leq C\delta^{2\nu-1}t^\nu,\label{E2.8}\\
      &\left\|\rho^{-l}\partial_\rho^k Z^{(N)}(t)\right\|_{L^\infty} \leq Ct^{2\nu},\, 2\leq k+l\leq 3.\label{E2.9}
      \end{align}
      The constants $C$ here and next do not depend on $N$ and $\delta$.

      In addition, there holds
      \begin{align}
      \left\|\chi^{(N)}(t)\right\|_{\dot{W}^{4,\infty}} + \left\|\langle y\rangle^{-1}\chi^{(N)}(t)\right\|_{\dot{W}^{5,\infty}} \leq Ct^{2\nu},\label{E2.10}\\
      \langle x\rangle^{2(\nu-1)}\nabla^4u^{(N)}(t), \langle x\rangle^{2(\nu-1)}\nabla^2u_t^{(N)}(t) \in L^\infty(\mathbb{R}^2),
      \end{align}
      where $\langle x\rangle=\sqrt{1+x^2}$.

      Furthermore, there exists $\zeta_N^*\in \dot{H}^1\cap\dot{H}^{1+2\nu-}$ such that $e^{\alpha(t)R}\chi^{(N)}(\lambda(t)\cdot,t)\rightarrow \zeta_N^*$ in $\dot{H}^1\cap\dot{H}^2$ as $t\rightarrow0$.
  \item[(ii)] The corresponding error
  \begin{equation}
  r^{(N)}=-u_t^{(N)} +\mathfrak{a}_1u^{(N)}\times\Delta u^{(N)} -\mathfrak{a}_2u^{(N)}\times(u^{(N)} \times\Delta u^{(N)})
  \end{equation}
  satisfies the estimate:
      \begin{equation}\label{E2.11}
      \left\|r^{(N)}(t)\right\|_{H^3} + \left\|\partial_tr^{(N)}(t)\right\|_{H^3} +\left\|\langle x\rangle r^{(N)}(t)\right\|_{L^2} \leq t^N,\quad 0<t\leq T(\delta,N).
      \end{equation}
\end{description}
\end{proposition}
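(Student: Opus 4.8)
The plan is to build $u^{(N)}$ by the now-standard renormalization/matched-asymptotics scheme of Krieger--Schlag--Tataru, Perelman, and Ortoleva--Perelman, adapted to the dissipative coefficients $\mathfrak{a}_1,\mathfrak{a}_2$. First I would pass to the self-similar variable $y=\lambda(t)x$, $\rho=|y|$, and write the sought 1-equivariant profile as $v(r,t)=e^{\alpha(t)R}\bigl(Q(\rho)+Z(\rho,t)\bigr)$ with $\lambda(t)=t^{-1/2-\nu}$, $\alpha(t)=\alpha_0\ln t$. Plugging into \eqref{E2.2} and using the static identities \eqref{E2.3} produces an evolution equation for $Z$ of the schematic form $\partial_t Z = (\mathfrak{a}_1 J_Q - \mathfrak{a}_2\Pi_Q)\bigl(\mathcal{L}Z\bigr) + (\text{scaling/rotation transport terms}) + (\text{nonlinearity})$, where $\mathcal{L} = -\Delta + \tfrac{1}{\rho^2}(\cdots)$ is the linearized operator around $Q$ acting in the tangent bundle, $J_Q = Q\times\cdot$, and $\Pi_Q$ projects onto $T_Q\mathbb{S}^2$. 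The scaling term contributes $-\tfrac{\lambda_t}{\lambda}\rho\partial_\rho Z \sim (\tfrac12+\nu)t^{-1}\rho\partial_\rho Z$ and the rotation term $-\alpha_t R(Q+Z)\sim \alpha_0 t^{-1} R(\cdots)$, so the natural ansatz is a formal expansion $Z = \sum_{k\ge 1} t^{k}\,(\log t)^{?}\, Z_k(\rho)$ — or more precisely, because of the quasi-self-similar structure, an expansion in powers of $\rho/t^{?}$ matched across three regions: the \emph{inner region} $\rho\lesssim 1$ (where $Q$ lives and $\mathcal{L}$ is the honest linearization), the \emph{self-similar region} $1\lesssim \rho \lesssim t^{-\nu}$ (where the generalized eigenvalue problem $(\mathcal{L}-\tilde\mu_j)f=0$ governs the profiles), and the \emph{remote/free region} $\rho\gtrsim t^{-\nu}$ (where $Q\to \mathbf{k}$ and the equation linearizes around the constant map).

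The heart of the construction is the iterative solution of the cascade of ODEs in the self-similar region. Here I would diagonalize the relevant operator, using the basis $\{f_j^1,f_j^2\}$ of solutions to $(\mathcal{L}-\tilde\mu_j)f=0$ furnished by Lemma \ref{lem2.6}: the key point flagged in the fourth remark is that these solutions have asymptotic expansions at $\rho=\infty$ whose \emph{powers of $\rho$ carry no $\mathfrak{a}_1,\mathfrak{a}_2$ dependence}, only the coefficients do. That is exactly what makes the matching work — the inner-region expansion of $Q+Z$ near $\rho=\infty$ is real and $\mathfrak{a}$-independent in its $\rho$-powers, so one can choose the free constants in the self-similar solutions $f_j^1,f_j^2$ to match those powers, absorbing all the $\mathfrak{a}$-dependence into coefficients that remain bounded. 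Concretely, at each order $k$ one solves an inhomogeneous linear ODE $(\mathcal{L}-\tilde\mu_j)Z_{k,j} = (\text{source built from }Z_{<k})$ by variation of parameters against $\{f_j^1,f_j^2\}$, fixes the homogeneous solution by the matching condition at $\rho\to 0^+$ (to glue with the inner region) and decay/no-faster-growth at $\rho\to\infty$ (to glue with the remote region, cf. Subsections \ref{subsec2.3}--\ref{subsec2.4}), and records the resulting $\rho$- and $t$-growth. One iterates $N$ times; truncating after order $N$ (with a smooth cutoff interpolating the three regional profiles) leaves an error $r^{(N)}$ that is $O(t^N)$ in the stated norms. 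The bounds \eqref{E2.5}--\eqref{E2.11} are then read off from the regional expansions: \eqref{E2.5}--\eqref{E2.9} from direct estimation of $Z_1,\dots$ in $L^2(\rho\,d\rho)$ and $L^\infty$ using the known $\rho$-asymptotics, the $\dot W^{k,\infty}$ bounds \eqref{E2.10} from differentiating the profile and using that each $\rho$-derivative gains a power of $\lambda$, and the weighted bounds and the $t\to 0$ convergence to $\zeta_N^*\in\dot H^1\cap\dot H^{1+2\nu-}$ from the fact that the leading profile scales like $\lambda^{2\nu}$ in $\dot H^{1+2\nu}$, which is exactly integrable up to $t=0$ when $\nu>0$; the regularity threshold $1+2\nu-$ comes from the logarithmic corrections in the expansion.

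The main obstacle I expect is precisely the \emph{matching of the three regions in the presence of the complex (parabolic) coefficients}. For the pure Schrödinger map ($\mathfrak{a}_1=1$, $\mathfrak{a}_2=0$) this is Perelman's construction; the dissipative term $\mathfrak{a}_2 u\times(u\times\Delta u)$ breaks the skew-symmetry, turning the profile ODEs into ones whose fundamental solutions genuinely involve $\mathfrak{a}_1+i\mathfrak{a}_2$, and a priori the $\mathfrak{a}$-dependent terms could enter at powers of $\rho$ that fail to match the $\mathfrak{a}$-independent inner expansion, destroying the whole scheme. The resolution — which is the technical core of Section \ref{Sec2} — is Lemma \ref{lem2.6}: one must exhibit the special basis $\{f_j^1,f_j^2\}$ whose $\rho$-power asymptotics are $\mathfrak{a}$-free, so that the "elimination regime" noted in the remark actually occurs and the matching closes with bounded (complex) coefficients. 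A secondary difficulty is bookkeeping the logarithmic factors that accumulate through the iteration — they must be controlled so that the final $\dot H^3$ bound in \eqref{T3}/\eqref{E2.7} is clean $Ct^{-1}$ and the limiting regularity is $1+2\nu-$ rather than worse — and ensuring the regional cutoffs are placed so that the transition errors are themselves $O(t^N)$, which forces the cutoff scales to be chosen as suitable powers of $t$ compatible with $1\lesssim\rho\lesssim t^{-\nu}$.
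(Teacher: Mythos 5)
Your outline reproduces the paper's overall strategy --- a three-scale matched construction (inner, self-similar, remote) in the spirit of Krieger--Schlag--Tataru and Perelman, with the key structural observation that the $y\to\infty$ powers of the basis $\{f_j^1,f_j^2\}$ for $(\mathcal{L}-\tilde\mu_j)f=0$ carry no $\mathfrak{a}_1,\mathfrak{a}_2$ dependence --- but as written it is a plan rather than a proof: every step that actually yields the estimates \eqref{E2.5}--\eqref{E2.11} is deferred, including the one you yourself flag as the crux (the analogue of Lemma \ref{lem2.6}), so nothing in Proposition \ref{pro2.1} is established. Concretely, the paper's proof consists of (a) the inner hierarchy \eqref{E2.19}, obtained by writing $V=Q+Z$ in the frame $\{f_1,f_2,Q\}$, reducing to the complex scalar equation \eqref{E2.17} and expanding in powers of $t^{2\nu k}$ (no logarithms at this stage --- logs appear only upon re-expansion in $y=rt^{-1/2}$, cf. \eqref{E2.26}), solved by variation of parameters against the explicit kernel $h_1,h_2$ of $L$ (Lemma \ref{lem2.2}); (b) the self-similar system \eqref{E2.44}--\eqref{E2.45} in the stereographic variable with ansatz $\sum t^{\nu(2j+1)}(\ln y-\nu\ln t)^l W_{j,l}(y)$, whose solvability, matching at $y\to0$ (Lemmas \ref{lem2.4}--\ref{lem2.5}) and asymptotics at $y\to\infty$ (Lemma \ref{lem2.6}) must be proved; (c) the remote-region construction around the time-independent profile $f_0$ with the oscillatory ansatz \eqref{E2.88} (Lemma \ref{lem2.9}); and (d) the gluing and verification of \eqref{E2.5}--\eqref{E2.11} via Lemmas \ref{lem2.3}, \ref{lem2.7}, \ref{lem2.10}. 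None of this is carried out in your proposal.

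Beyond incompleteness, two structural points in your sketch are off and would derail an attempted execution. First, you conflate the two linearized operators: the operator whose basis is $\{f_j^1,f_j^2\}$ is not ``the linearized operator around $Q$'' but $\mathcal{L}=-\Delta+\frac{1}{y^2}+\frac{i}{2(\mathfrak{a}_1-i\mathfrak{a}_2)}y\partial_y$, i.e. the linearization about the constant map $\mathbf{k}$ after stereographic projection in the self-similar variable $y=rt^{-1/2}$; the linearization around $Q$, namely $L=-\Delta+\frac{1-2h_1^2}{\rho^2}$, governs only the inner region and is handled by the explicit pair $h_1,h_2$. Second, you attach the $\mathfrak{a}$-free powers of $\{f_j^1,f_j^2\}$ to the inner/self-similar matching, whereas that interface is resolved at $y\to0$ through a different basis ($e_j^1,e_j^2$ and Lemma \ref{lem2.5}), with coefficients dictated by the re-expanded inner solution \eqref{E2.42}; the $\mathfrak{a}$-independence of the powers at $y\to\infty$ is what makes the remote region work, and that region is not simply ``linearization around the constant map'' but a perturbation of the nontrivial time-independent profile $f_0(r)=\theta(r/\delta)\sum\beta_0(j,l)(\ln r)^l r^{2i\alpha_0+2\nu(2j+1)}$ with phases $e^{-im\Phi}$, $\Phi$ containing $\frac{r^2}{4(\mathfrak{a}_1-i\mathfrak{a}_2)t}$. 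Until these identifications are straightened out and the constituent lemmas actually proved, the proposal does not constitute a proof of Proposition \ref{pro2.1}.
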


Here are some comments on the proposition.
\begin{remark}
Note that \eqref{E2.5} and \eqref{E2.6} imply
\begin{equation}\label{E2.12}
\left\|u^{(N)}(t)-e^{\alpha(t)R}\phi(\lambda(t)\cdot)\right\|_{\dot{H}^1\cap \dot{H}^2} \leq \delta^{2\nu-1},\quad \forall t\in(0,T(N,\delta)].
\end{equation}
\end{remark}

\begin{remark}
According to the construction, for any $s<\nu$, $\chi^{(N)}(t)\in\dot{H}^{1+2s}$ satisfies the following estimate:
\begin{equation}
\left\|\chi^{(N)}(t)\right\|_{\dot{H}^{1+2s}(\mathbb{R}^2)} \leq C\left(t^{2\nu}+t^{s(1+2\nu)}\delta^{2\nu-2s}\right).
\end{equation}
\end{remark}

\begin{remark}
In fact, the remainder $r^{(N)}$ satisfies that for any $m,l,k$, if $N\geq C_{l,m,k}$, then
\begin{equation}
\left\|\langle x\rangle^l\partial_t^mr^{(N)}(t)\right\|_{H^k} \leq C_{l,m,k}t^{N-C_{l,m,k}}.
\end{equation}
\end{remark}

Next, we prove Proposition \ref{pro2.1}. For convenience, we consider only the case when $\nu$ is an irrational number, and it is natural to extend it to the case when $\nu$ is a rational number.

To construct an arbitrarily good approximate solution, we analyze three regions corresponding to three different spatial scales: the inner region with the scale $r\lambda(t)\lesssim 1$, the self-similar region with scale $r=O(t^{1/2})$ and the remote region with scale $r=O(1)$. The inner region is the region where the blowup concentrates, in which we construct the solutions by perturbing the profile $e^{\alpha(t)R}Q(\lambda(t)r)$. In the self-similar and remote regions, we construct solutions that are close to $\mathbf{k}$. These solutions are essentially described by their corresponding linearized equations. More precisely, in the self-similar region, the profile of the solutions is uniquely determined by the matching conditions in the inner region, while in the remote region, the profile remains essentially a free parameter of the structure and can be matched only by the limit behavior at the origin, see Subsections \ref{subsec2.3} and \ref{subsec2.4} for more details. There are some closely related similar studies for other equations, such as the critical harmonic map heat flow \cite{AH05}\cite{BHK03} and the critical Schr\"odinger map flow \cite{Perelman14} and the critical Schr\"odinger equation \cite{OP13}.

\subsection{Inner region $r\lambda(t)\lesssim1$}
First, we consider the inner region $0\leq r\lambda(t)\leq10t^{-\nu+\varepsilon_1}$, where $0<\varepsilon_1<\nu$ is to be determined. Writing $v(r,t)$ as
$$
v(r,t)=e^{\alpha(t)R}V(\lambda(t)r,t),\quad V=(V_1,V_2,V_3),
$$
and using \eqref{E2.2}, we get
\begin{equation}\label{E2.13}
\begin{split}
&t^{1+2\nu}V_t+\alpha_0t^{2\nu}RV -t^{2\nu}\left(\nu+\frac{1}{2}\right)\rho V_\rho\\
& \quad =\mathfrak{a}_1V\times\left(\Delta V+\frac{R^2}{\rho^2}V\right) -\mathfrak{a}_2V\times\left[V\times\left(\Delta V+\frac{R^2}{\rho^2}V\right)\right],\quad \rho=\lambda(t)r.
\end{split}
\end{equation}
We construct the solution of \eqref{E2.13}, which is a perturbation of the harmonic map $Q(\rho)$:
$$
V=Q+Z.
$$
We further decompose $Z$ as
$$
Z(\rho,t)=z_1(\rho,t)f_1(\rho)+z_2(\rho,t)f_2+\gamma(\rho,t)Q(\rho),
$$
where $\{f_1,f_2\}$ is an orthogonal frame on the tangent space $T_Q\mathbb{S}^2$:
$$
f_1(\rho)=\left(
            \begin{array}{c}
              h_3(\rho) \\
              0 \\
              -h_1(\rho) \\
            \end{array}
          \right),\quad
f_2=\left(
            \begin{array}{c}
              0 \\
              1 \\
              0 \\
            \end{array}
          \right).
$$
Therefore, we obtain that
$$
\gamma=\sqrt{1-|z|^2}-1=O(|z|^2),\quad z_1+iz_2,
$$
and the identities:
\begin{equation}
\begin{split}
&\partial_\rho Q=-\frac{h_1}{\rho}f_1,\quad \partial_\rho f_1=\frac{h_1}{\rho}Q,\quad f_2=Q\times f_1,\\
&\Delta f_1+\frac{R^2}{\rho^2}f_1=-\frac{1}{\rho^2}f_1-\frac{2h_3h_1}{\rho^2}Q.
\end{split}
\end{equation}
Now we write \eqref{E2.13} as an equation with respect to $z$. A direct calculation shows that
\begin{equation}\label{E2.14}
\begin{split}
RV&=-h_3z_2f_1+[h_3z_1+h_1(1+\gamma)]f_2-h_1z_2Q,\\
\rho\partial_\rho V&=[\rho\partial_\rho z_1-h_1(1+\gamma)]f_1+\rho\partial_\rho z_2f_2 +(h_1z_1+\rho\partial_\rho\gamma)Q.
\end{split}
\end{equation}
Next, we calculate the nonlinear term
$$
\mathfrak{a}_1V\times\left(\Delta V+\frac{R^2}{\rho^2}V\right) -\mathfrak{a}_2V\times\left[V\times\left(\Delta V+\frac{R^2}{\rho^2}V\right)\right].
$$
In the basis $\{f_1,f_2,Q\}$, $\Delta V+\frac{R^2}{\rho^2}V$ can be expressed as
\begin{align*}
\Delta V+\frac{R^2}{\rho^2}V= &\left(\Delta z_1-\frac{z_1}{\rho^2}-2\frac{h_1}{\rho}\gamma_\rho\right)f_1 +\left(\Delta z_2-\frac{z_2}{\rho^2}\right)f_2\\
&+\left[\Delta\gamma+\kappa(\rho)(1+\gamma) +2\frac{h_1}{\rho}\partial_\rho z_1 -2\frac{h_1h_3}{\rho^2}z_1\right]Q,
\end{align*}
Thus, we obtain that
\begin{align*}
V\times\left(\Delta V+\frac{R^2}{\rho^2}V\right) =\left[(1+\gamma)Lz_2+F_1(z)\right]f_1-\left[(1+\gamma)Lz_1+F_2(z)\right]f_2+F_3(z)Q,
\end{align*}
and
\begin{align*}
&V\times\left[V\times\left(\Delta V+\frac{R^2}{\rho^2}V\right)\right]\\ =&\left\{z_2F_3(z)+(1+\gamma)\left[(1+\gamma)Lz_1+F_2(z)\right]\right\}f_1\\
&+\left\{(1+\gamma)\left[(1+\gamma)Lz_2+F_1(z)\right]-z_1F_3(z)\right\}f_2\\
&-\left\{z_1\left[(1+\gamma)Lz_1+F_2(z)\right] +z_2\left[(1+\gamma)Lz_2+F_1(z)\right]\right\}Q.
\end{align*}
Therefore,
\begin{equation}\small\label{E2.15}
\begin{split}
&\mathfrak{a}_1V\times\left(\Delta V+\frac{R^2}{\rho^2}V\right) -\mathfrak{a}_2V\times\left[V\times\left(\Delta V+\frac{R^2}{\rho^2}V\right)\right]\\
=&\left(\mathfrak{a}_1\left[(1+\gamma)Lz_2+F_1(z)\right] -\mathfrak{a}_2\big\{z_2F_3(z)+(1+\gamma)\big[(1+\gamma)Lz_1+F_2(z)\big]\big\}\right)f_1\\
&+\left(-\mathfrak{a}_1\left[(1+\gamma)Lz_1+F_2(z)\right] -\mathfrak{a}_2\big\{(1+\gamma)\big[(1+\gamma)Lz_2+F_1(z)\big]-z_1F_3(z)\big\}\right)f_2\\
&+\left(\mathfrak{a}_1F_3(z)+\mathfrak{a}_2\big\{z_1\big[(1+\gamma)Lz_1+F_2(z)\big] +z_2\left[(1+\gamma)Lz_2+F_1(z)\right]\big\}\right)Q,
\end{split}
\end{equation}
where
\begin{equation}\label{E2.16}
\begin{split}
L&=-\Delta+\frac{1-2h_1^2}{\rho^2},\\
F_1(z)&=z_2\left(\Delta\gamma-2\frac{h_1h_3}{\rho^2}z_1+2\frac{h_1}{\rho}\partial_\rho z_1\right),\\
F_2(z)&=z_1\left(\Delta\gamma-2\frac{h_1h_3}{\rho^2}z_1+2\frac{h_1}{\rho}\partial_\rho z_1\right) +\frac{2h_1}{\rho}(1+\gamma)\gamma_\rho,\\
F_3(z)&=z_1\Delta z_2-z_2\Delta z_1+\frac{2h_1}{\rho}z_2\gamma_\rho.
\end{split}
\end{equation}
By projecting \eqref{E2.13} onto the plane $\text{span}\{f_1,f_2\}$ and using \eqref{E2.14}, \eqref{E2.15} and \eqref{E2.16}, we can rewrite \eqref{E2.13} as
\begin{align}\label{E2.17}
it^{1+2\nu}z_t-\alpha_0 t^{2\nu}h_3z &-i\left(\frac{1}{2}+\nu\right)t^{2\nu}\rho z_\rho\\
&=(\mathfrak{a}_1-i\mathfrak{a}_2)Lz +\mathfrak{a}_1F(z)+\mathfrak{a}_1dt^{2\nu}h_1-\mathfrak{a}_2\widetilde{F}(z),\notag
\end{align}
where
\begin{align*}
&d=\alpha_0-i\left(\frac{1}{2}+\nu\right),\\
&F(z)=\gamma Lz+z\left(\Delta\gamma+\frac{2h_1}{\rho}\partial_\rho z_1-\frac{2h_1h_3z_1}{\rho^2}\right) +\frac{2h_1}{\rho}(1+\gamma)\gamma_\rho +dt^{2\nu}\gamma h_1,\\
&\widetilde{F}(z)=zF_3(z) -i|z|^2 Lz+ i(1+\gamma)\left[z\left(\Delta\gamma+\frac{2h_1}{\rho}\partial_\rho z_1-\frac{2h_1h_3z_1}{\rho^2}\right) +\frac{2h_1}{\rho}(1+\gamma)\gamma_\rho\right].
\end{align*}
Note that $F$ and $\widetilde{F}$ are functions at least quadratic with respect to $z$.

Expand the solutions of \eqref{E2.17} as a power series of $t^{2\nu}$:
\begin{equation}\label{E2.18}
z(\rho,t)=\sum_{k\geq1}t^{2\nu k}z^k(\rho).
\end{equation}
Substituting \eqref{E2.18} into \eqref{E2.17}, we obtain a system with respect to $z^k$, $k\geq1$:
\begin{equation}\label{E2.19}
\begin{cases}
Lz^1=&-\frac{\mathfrak{a}_1d}{\mathfrak{a}_1-i\mathfrak{a}_2}h_1,\\
Lz^k=&\mathcal{F}_k,\quad k\geq2,
\end{cases}
\end{equation}
where $\mathcal{F}_k$ depends only on $z^j$, $j=1,2,\cdots,k-1$, and \eqref{E2.19} satisfies the following conditions at $\rho=0$:
\begin{equation}\label{E2.21}
z^k(0)=\partial_\rho z^k(0)=0.
\end{equation}
\begin{lemma}\label{lem2.2}
There exists a unique solution $(z^k)_{k\geq1}$ to the problem \eqref{E2.19}, \eqref{E2.21}, where $z^k\in C^\infty(\mathbb{R}_+)$,  $\forall k\geq1$. Furthermore,
\begin{description}
  \item[(i)] $z^k$ has an odd Taylor expansion at $\rho=0$, and the leading term is of order $2k+1$;
  \item[(ii)] As $\rho\rightarrow\infty$, $z^k$ has the following expansion:
\begin{equation}\label{E2.22}
z^k(\rho)=\sum_{l=0}^{2k}\sum_{j\leq k-\frac{l-1}{2}}c_{j,l}^k\rho^{2j-1}(\ln\rho)^l,
\end{equation}
where $c_{j,l}^k=c_{j,l}^k(d,\mathfrak{a}_1,\mathfrak{a}_2)$ is constant, and the asymptotic expansion \eqref{E2.22} can be differentiated with respect to $\rho$ any number of times.
\end{description}
\end{lemma}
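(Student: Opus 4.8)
The plan is to view \eqref{E2.19}--\eqref{E2.21} as a sequence, triangular in $k$, of linear inhomogeneous second-order ODEs driven by the scalar Schr\"odinger operator $L=-\partial_\rho^2-\tfrac1\rho\partial_\rho+\tfrac{1-2h_1^2}{\rho^2}$, and to induct on $k$ using one fixed fundamental system of $Lf=0$. First I record the homogeneous solutions. Since $\rho\partial_\rho Q=-h_1f_1$ and $L$ is exactly the linearization of the harmonic map equation in the $f_1$-direction, a direct computation from \eqref{E2.3} gives $Lh_1=0$; so one takes $\phi_0:=h_1=\tfrac{2\rho}{\rho^2+1}$, which is smooth and odd, with $\phi_0\sim2\rho$ at $\rho=0$ and $\phi_0\sim2\rho^{-1}$ at $\rho=\infty$. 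A second, linearly independent solution is produced by reduction of order, $\psi_0(\rho)=\phi_0(\rho)\int^\rho\frac{ds}{s\,\phi_0(s)^2}$ (the quantity $W:=\rho(\phi_0\psi_0'-\phi_0'\psi_0)$ being a nonzero constant), and a short computation gives $\psi_0\sim-\tfrac14\rho^{-1}$ near $0$ and $\psi_0=\tfrac14\rho+c\,\rho^{-1}\ln\rho+\cdots$ near $\infty$. Thus the fundamental system already displays the $\ln\rho$ at infinity responsible for the logarithms in \eqref{E2.22}, while at the origin the only singular behaviour is the $\rho^{-1}$ of $\psi_0$.

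Given this, unique solvability is essentially forced. Assume $z^1,\dots,z^{k-1}$ have been constructed with all the asserted properties. Then $\mathcal{F}_k$ is a smooth function on $[0,\infty)$: it is built polynomially from the $z^j$, from $\gamma=\sqrt{1-|z|^2}-1$, from $h_1,h_3$, and from their $\rho$-derivatives, divided by powers of $\rho$ that are compensated by the vanishing orders at $0$; tracking parities ($h_1,\rho,z^j$ odd, $h_3,\gamma$ even, $\partial_\rho$ reversing parity, as one reads off \eqref{E2.16}) shows $\mathcal{F}_k$ has an odd Taylor expansion at $0$, of leading order $2k-1$ (for $k=1$ this is visible from $Lz^1\propto h_1\sim2\rho$). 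Near $\rho=0$ the general solution of $Lz^k=\mathcal{F}_k$ is an odd particular solution (of leading order $2k+1$, since $L\rho^m=(1-m^2)\rho^{m-2}$ with $1-(2k+1)^2\neq0$) plus $A\psi_0+B\phi_0$; finiteness at $0$ forces $A=0$, and then $\partial_\rho z^k(0)=0$ forces $B=0$. Hence there is a unique candidate, which is extended from a neighbourhood of $0$ to all of $(0,\infty)$ by the standard existence theory for linear ODEs with coefficients smooth on $(0,\infty)$. Smoothness and the odd expansion at $\rho=0$ with leading order $2k+1$ --- claim (i) --- follow from Frobenius: substituting a formal series $\sum_{m\ge 2k+1,\ m\ \mathrm{odd}}a_m\rho^m$ into $Lz^k=\mathcal{F}_k$ yields a recursion for the $a_m$ solvable because $1-m^2\neq0$ for $m\ge3$, and the resulting convergent series coincides with $z^k$ by uniqueness.

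The substantive point is claim (ii), the expansion \eqref{E2.22} at $\rho=\infty$, again by induction on $k$. I would use the variation-of-parameters representation
\[
z^k(\rho)=\frac1W\Bigl(\phi_0(\rho)\int \rho\,\mathcal{F}_k(\rho)\,\psi_0(\rho)\,d\rho-\psi_0(\rho)\int \rho\,\mathcal{F}_k(\rho)\,\phi_0(\rho)\,d\rho\Bigr),
\]
with the integration constants chosen so as to enforce the origin conditions. Feeding in the inductively known expansions of $z^1,\dots,z^{k-1}$ together with $h_1=2\rho^{-1}(1-\rho^{-2}+\cdots)$ and $h_3=1-2\rho^{-2}+\cdots$ shows that $\mathcal{F}_k$ admits at infinity an expansion in the monomials $\rho^{2j-1}(\ln\rho)^l$; applying $L^{-1}$ through the two integrals raises the power of $\rho$ by $2$ and raises the power of $\ln\rho$ by at most one, the increase occurring exactly when the integration passes through the resonance at the $\rho^{1}$-mode of $L$ --- precisely the mechanism already visible in $\psi_0$. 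Balancing this "one extra $\ln\rho$ per factor $\rho^2$" against the budget carried by $\mathcal{F}_k$ yields the stated constraints $0\le l\le 2k$ and $j\le k-\tfrac{l-1}{2}$, with constants $c^k_{j,l}$ depending only on $d,\mathfrak{a}_1,\mathfrak{a}_2$. Finally, term-by-term differentiability of \eqref{E2.22} is obtained by subtracting a sufficiently long partial sum, observing that the remainder $w$ solves $Lw=(\text{remainder of }\mathcal{F}_k)$ with right-hand side decaying faster than any fixed power of $\rho$, and reading off the decay of $w,\partial_\rho w,\partial_\rho^2 w,\dots$ from the same integral formula and from the equation itself.

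I expect the main obstacle to be exactly this logarithmic bookkeeping in claim (ii): one must verify that the nonlinear recursion producing $\mathcal{F}_k$ does not generate more logarithms than the budget $l\le 2k$, $j\le k-\tfrac{l-1}2$ permits, which requires a careful analysis of how products and $\rho$-derivatives of the lower-order expansions interact with the resonant behaviour of $L^{-1}$ at the $\rho$-mode. By contrast, the existence, uniqueness, smoothness and the behaviour at the origin are routine regular-singular-point arguments.
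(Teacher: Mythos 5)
Your proposal is correct and follows essentially the same route as the paper: the paper also works with the fundamental system $\{h_1,h_2\}$ of $Lf=0$ (your reduction-of-order solution $\psi_0$ is exactly $h_2/4$), solves each $Lz^k=\mathcal{F}_k$ by the variation-of-parameters integral from $\rho=0$, and inducts on $k$, tracking the odd expansion of order $2k+1$ at the origin and the $\rho^{2j-1}(\ln\rho)^l$ budget at infinity coming from the resonant modes. The only differences are presentational (explicit $h_2$ versus reduction of order, and uniqueness read off from the integral formula rather than from the indicial roots), so no further comment is needed.
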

\begin{proof}
Note that the equation $Lf=0$ has the following two explicit exact solutions:
\begin{equation}
h_1(\rho),\quad h_2(\rho)=\frac{\rho^4+4\rho^2\ln\rho-1}{\rho(\rho^2+1)}.
\end{equation}

For the case of $k=1$:
\begin{equation*}
\begin{cases}
&Lz^1=-\frac{\mathfrak{a}_1d}{\mathfrak{a}_1-i\mathfrak{a}_2}h_1,\\
&z^1(0)=\partial_\rho z^1(0)=0.
\end{cases}
\end{equation*}
We get
\begin{align}\label{E2.23}
z^1(\rho)=&-\frac{\mathfrak{a}_1d}{4(\mathfrak{a}_1-i\mathfrak{a}_2)}\int_0^\rho \big[h_1(\rho)h_2(s)-h_1(s)h_2(\rho)\big]h_1(s)sds\notag\\
=&-\frac{\mathfrak{a}_1d\rho}{(\mathfrak{a}_1-i\mathfrak{a}_2)\left(1+\rho^2\right)}\int_0^\rho \frac{s\left(s^4+4s^2\ln s-1\right)}{\left(1+s^2\right)^2}ds\\ &+\frac{\mathfrak{a}_1d\left(\rho^4+4\rho^2\ln\rho-1\right)}{(\mathfrak{a}_1-i\mathfrak{a}_2)\rho(\rho^2+1)} \int_0^\rho\frac{s^3}{\left(1+s^2\right)^2}ds.\notag
\end{align}
Note that $h_1$ is a $C^\infty$ function, it has an odd Taylor expansion at $\rho=0$ and the leading term of the expansion is linear, so we can expand $z^1$ as an odd Taylor expansion with a cubic leading term. This proves $\mathbf{(i)}$ for $k=1$.

The asymptotic behavior of $z^1$ at infinity can be obtained directly from \eqref{E2.23}:
$$
z^1(\rho)=c_{1,0}^1\rho+c_{1,1}^1\rho\ln\rho+\sum_{j\leq0}\sum_{l=0,1,2}c_{j,l}^1\rho^{2j-1}(\ln\rho)^l,
$$
where $c_{1,0}^1=-c_{1,1}^1=-\frac{\mathfrak{a}_1d}{\mathfrak{a}_1-i\mathfrak{a}_2}$. Thus, $\mathbf{(ii)}$ holds for $k=1$.

For the case of $k>1$, we prove it by induction. Suppose that $z^j$, $j\leq k-1$, satisfy $\mathbf{(i)}$ and $\mathbf{(ii)}$. According to \eqref{E2.17}, we get that $\mathcal{F}_k$ is an odd $C^\infty$ function, moreover, its asymptotic expansion at $\rho=0$ is zero at order $2k-1$, and the asymptotic expansion as $\rho\rightarrow\infty$ is
\begin{align*}
\mathcal{F}_k=&\sum_{j=1}^{k-1}\sum_{l=0}^{2k-2j-1}\alpha_{j,l}^k\rho^{2j-1}(\ln\rho)^l +\sum_{l=0}^{2k-2}\alpha_{0,l}^k\rho^{-1}(\ln\rho)^l\\
&+\sum_{l=0}^{2k-1}\alpha_{-1,l}^k\rho^{-3}(\ln\rho)^l +\sum_{j\leq-2}\sum_{l=0}^{2k}\alpha_{j,l}^k\rho^{2j-1}(\ln\rho)^l.
\end{align*}
Thus, we obtain that
$$
z^k(\rho)=\frac{1}{4}\int_0^\rho\left[h_1(\rho)h_2(s)-h_1(s)h_2(\rho)\right]\mathcal{F}_k(s)sds
$$
is a $C^\infty$ function, meanwhile, it can be expanded to an odd Taylor series at $\rho=0$ with leading term of order $2k+1$, and has the following asymptotic expansion as $\rho\rightarrow\infty$:
$$
z^k(\rho)=\sum_{l=0}^{2k}\sum_{j\leq k-\frac{l-1}{2}}c_{j,l}^k(\ln\rho)^l\rho^{2j-1}.
$$
This proves Lemma \ref{lem2.2}.
\end{proof}

By \eqref{E2.18}, we obtain a formal solution of \eqref{E2.2}:
\begin{equation}\label{E2.24}
v(r,t)=e^{\alpha(t)R}V\left(\lambda(t)r,t\right),\quad V(\rho,t)=Q+\sum_{k\geq1}t^{2\nu k}Z^k(\rho),
\end{equation}
where $Z^k=(Z_1^k,Z_2^k,Z_3^k)$. Here $Z_i^k$, $i=1,2$, are smooth odd functions with respect to $\rho$, and their asymptotic expansion at $\rho=0$ is zero at order $2k+1$, meanwhile, $Z_3^k$ is an even function, and its asymptotic expansion at $\rho=0$ is zero at order $2k+2$. As $\rho\rightarrow\infty$, we have
\begin{equation}\label{E2.25}
\begin{split}
Z_i^k(\rho)=&\sum_{l=0}^{2k}\sum_{j\leq k-\frac{l-1}{2}} c_{j,l}^{k,i}(\ln\rho)^l\rho^{2j-1},\quad i=1,2,\\
Z_3^k(\rho)=&\sum_{l=0}^{2k}\sum_{j\leq k+1-\frac{l}{2}} c_{j,l}^{k,3}(\ln\rho)^l\rho^{2j-2},
\end{split}
\end{equation}
where the coefficients $c_{j,l}^{k,i}=c_{j,l}^{k,i}(d,\mathfrak{a}_1,\mathfrak{a}_2)$ satisfy $c_{k+1,0}^{k,3}=0$, $\forall k\geq1$. The asymptotic expansion \eqref{E2.25} can be differentiated with respect to $\rho$ any number of times.

According to \eqref{E2.24} and \eqref{E2.25}, as $\rho\rightarrow\infty$, each component of $V(\rho,t)$ (i.e., $V_i(\rho,t)$, $i=1,2,3$) has the following asymptotic expansion:
\begin{align*}
V_i(\rho,t)=&\sum_{j\leq0}c_{j,0}^{0,1}\rho^{2j-1} +\sum_{k\geq1}t^{2\nu k}\sum_{l=0}^{2k}\sum_{j\leq k-\frac{l-1}{2}} c_{j,l}^{k,i}(\ln\rho)^l\rho^{2j-1},\quad i=1,2\\
V_3(\rho,t)=&1+\sum_{j\leq0}c_{j,0}^{0,3}\rho^{2j-2}+ \sum_{k\geq1}t^{2\nu k}\sum_{l=0}^{2k}\sum_{j\leq k+1-\frac{l}{2}} c_{j,l}^{k,3}(\ln\rho)^l\rho^{2j-2}.
\end{align*}
Taking $\rho\rightarrow\infty$ and $y\equiv rt^{-\frac{1}{2}}\rightarrow 0$, the above expansions can be formally rewritten as the new expansions with respect to $y$:
\begin{equation}\label{E2.26}
\begin{split}
V_i(\lambda(t)r,t)=&\sum_{j\geq0}t^{\nu(2j+1)}\sum_{l=0}^{2j+1}\left(\ln y-\nu\ln t\right)^l V_i^{j,l}(y),\quad i=1,2,\\
V_3(\lambda(t)r,t)=&1+\sum_{j\geq1}t^{2\nu j}\sum_{l=0}^{2j}\left(\ln y-\nu\ln t\right)^lV_3^{j,l}(y),\\
V_{i}^{j,l}(y)=&\sum_{k\geq-j+\frac{l}{2}}c_{k,l}^{k+j,i}y^{2k-1},\quad i=1,2,\\
V_3^{j,l}(y)=&\sum_{k\geq-j+\frac{l}{2}}c_{k+1,l}^{k+j,3}y^{2k},
\end{split}
\end{equation}
where $c_{j,l}^{k,i}$ defined by \eqref{E2.25} for $k\neq0$, and $c_{j,0}^{0,i}$ is derived from the expansion of $Q$ as $\rho\rightarrow\infty$:
$$
h_1(\rho)=\sum_{j\leq0}c_{j,0}^{0,1}\rho^{2j-1},\quad c_{j,0}^{0,2}=0,\quad h_3(\rho)=1+\sum_{j\leq0}c_{j,0}^{0,3}\rho^{2j-2}.
$$
The expression \eqref{E2.26} expanded with respect to $y$ is crucial in the matching between the self-similar region and the inner region below.

For $N\geq2$, we define
$$
z_{\text{in}}^{(N)}=\sum_{k=1}^Nt^{2\nu k}z^k,\quad z_{\text{in}}^{(N)}=z_{\text{in},1}^{(N)}+iz_{\text{in},2}^{(N)}.
$$
Substituting $z_{\text{in}}^{(N)}$ into \eqref{E2.17}, we get the error
\begin{align*}
X_N=-it^{1+2\nu}\partial_t z_{\text{in}}^{(N)}&+\alpha_0 t^{2\nu}h_3z_{\text{in}}^{(N)} +i\left(\frac{1}{2}+\nu\right)t^{2\nu}\rho \partial_\rho z_{\text{in}}^{(N)}\\ &+(\mathfrak{a}_1-i\mathfrak{a}_2)Lz_{\text{in}}^{(N)} +\mathfrak{a}_1F\left(z_{\text{in}}^{(N)}\right) +\mathfrak{a}_1dt^{2\nu}h_1 -\mathfrak{a}_2\widetilde{F}\left(z_{\text{in}}^{(N)}\right).
\end{align*}
According to the definition of $z^k$, $\rho<\langle\rho\rangle$ and $\ln\rho<\ln(2+\rho)$, it is easy to verify that the error $X_N$ satisfies the following estimate:
There exists $T(N)>0$, for any $k,m\in\mathbb{N}$, $0\leq l\leq(2N+1-k)_+$, $0\leq\rho\leq 10 t^{-\nu+\varepsilon_1}$ and $0<t\leq T(N)$, we have
\begin{equation}\label{E2.27}
\left|\rho^{-l}\partial_\rho^k\partial_t^m X_N\right|\leq C_{k,l,m}t^{2\nu N-m}\langle \rho\rangle^{2(N+1)-1-l-k}\ln(2+\rho),
\end{equation}
where $N_+=\max\{N,0\}$.

Let
\begin{align*}
\gamma_{\text{in}}^{(N)}=&\sqrt{1-\left|z_{\text{in}}^{(N)}\right|^2}-1,\\
Z_{\text{in}}^{(N)}=&z_{\text{in},1}^{(N)}f_1 +z_{\text{in},2}^{(N)}f_2 +\gamma_{\text{in}}^{(N)}Q,\\
V_{\text{in}}^{(N)}=&Q+Z_{\text{in}}^{(N)}\in \mathbb{S}^2.
\end{align*}
Then $V_{\text{in}}^{(N)}$ satisfies
\begin{equation}\label{E2.28}
\begin{split}
&t^{1+2\nu}\partial_t V_{\text{in}}^{(N)} +\alpha_0t^{2\nu}t^{2\nu}RV_{\text{in}}^{(N)} -t^{2\nu}\left(\nu+\frac{1}{2}\right)\rho \partial_\rho V_{\text{in}}^{(N)}\\
=&\mathfrak{a}_1V_{\text{in}}^{(N)}\times\left(\Delta V_{\text{in}}^{(N)}+\frac{R^2}{\rho^2}V_{\text{in}}^{(N)}\right)\\ &-\mathfrak{a}_2V_{\text{in}}^{(N)}\times\left[V_{\text{in}}^{(N)}\times\left(\Delta V_{\text{in}}^{(N)}+\frac{R^2}{\rho^2}V_{\text{in}}^{(N)}\right)\right] +\mathcal{R}_{\text{in}}^{(N)},
\end{split}
\end{equation}
where
$$
\mathcal{R}_{\text{in}}^{(N)} =\operatorname{Im}\left(X_{N}\right)f_1 -\operatorname{Re}\left(X_N\right) f_2 +\frac{\operatorname{Im}\left(\bar{X}_N z^{(N)}\right)}{1+\gamma^{(N)}}Q
$$
has the same estimate as the error $X_N$. According to the analysis, for any $0\leq\rho\leq10 t^{-\nu+\varepsilon_1}$ and $0<t\leq T(N)$, we have
\begin{equation}\label{E2.29}
\left|\rho^{-l}\partial_\rho^k Z_{\text{in}}^{(N)}\right|\leq C_{k,l}t^{2\nu} \langle\rho\rangle^{1-l-k}\ln(2+\rho),\quad k\in\mathbb{N},\quad l\leq(3-k)_+.
\end{equation}
Thus, we obtain the following estimates.
\begin{lemma}\label{lem2.3}
There exists $T(N)>0$ such that for any $0<t\leq T(N)$, the following holds.
\begin{description}
  \item[(i)] $Z_{\text{in}}^{(N)}(\rho,t)$ satisfies
\begin{align}
&\left\|\partial_\rho Z_{\text{in}}^{(N)}(t)\right\|_{L^2\left(\rho d\rho, 0\leq\rho\leq10t^{-\nu+\varepsilon_1}\right)} \leq Ct^\nu,\label{E2.30}\\
&\left\|\rho^{-1}\partial_\rho Z_{\text{in}}^{(N)}(t)\right\|_{L^2\left(\rho d\rho, 0\leq\rho\leq10t^{-\nu+\varepsilon_1}\right)} \leq Ct^\nu,\label{E2.31}\\
&\left\|Z_{\text{in}}^{(N)}(t)\right\|_{L^\infty\left(0\leq\rho\leq10t^{-\nu+\varepsilon_1}\right)} +\left\|\rho\partial_\rho Z_{\text{in}}^{(N)}(t)\right\|_{L^\infty\left( 0\leq\rho\leq10t^{-\nu+\varepsilon_1}\right)} \leq Ct^\nu,\label{E2.32}\\
&\left\|\rho^{-l}\partial_\rho^k Z_{\text{in}}^{(N)}(t)\right\|_{L^2\left(\rho d\rho, 0\leq\rho\leq10t^{-\nu+\varepsilon_1}\right)} \leq Ct^{2\nu}\left(1+|\ln t|\right), \quad k+l=2,\label{E2.33}\\
&\left\|\rho^{-l}\partial_\rho^k Z_{\text{in}}^{(N)}(t)\right\|_{L^2\left(\rho d\rho, 0\leq\rho\leq10t^{-\nu+\varepsilon_1}\right)} \leq Ct^{2\nu}, \quad k+l\geq3,\, l\leq(3-k)_+,\label{E2.34}\\
&\left\|\partial_\rho Z_{\text{in}}^{(N)}(t)\right\|_{L^\infty\left(0\leq\rho\leq10t^{-\nu+\varepsilon_1}\right)} \label{E2.35}\\
&\quad\quad\quad\quad\quad +\left\|\rho^{-1}Z_{\text{in}}^{(N)}(t)\right\|_{L^\infty\left( 0\leq\rho\leq10t^{-\nu+\varepsilon_1}\right)} \leq Ct^{2\nu}\left(1+|\ln t|\right),\notag\\
&\left\|\rho^{-l}\partial_\rho^k Z_{\text{in}}^{(N)}(t)\right\|_{L^\infty\left(0\leq\rho\leq10t^{-\nu+\varepsilon_1}\right)} \leq Ct^{2\nu},\quad 2\leq l+k,\, l\leq(3-k)_+. \label{E2.36}
\end{align}
  \item[(ii)] The error $\mathcal{R}_{\text{in}}^{(N)}$ has the estimate: If $N>\varepsilon_1^{-1}$, then
\begin{equation}\label{E2.37}
\begin{split}
\left\|\rho^{-l}\partial_\rho^k\mathcal{R}_{\text{in}}^{(N)}(t)\right\|_{L^2\left(\rho d\rho, 0\leq\rho\leq10t^{-\nu+\varepsilon_1}\right)}&\leq t^{N\varepsilon_1},\quad 0\leq l+k\leq3,\\
\left\|\rho^{-l}\partial_\rho^k\partial_t\mathcal{R}_{\text{in}}^{(N)}(t)\right\|_{L^2\left(\rho d\rho, 0\leq\rho\leq10t^{-\nu+\varepsilon_1}\right)}&\leq t^{N\varepsilon_1},\quad 0\leq l+k\leq1.
\end{split}
\end{equation}
\end{description}
\end{lemma}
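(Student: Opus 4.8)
The plan is to derive both parts of Lemma~\ref{lem2.3} by inserting the pointwise bounds already established for the inner profile and the inner error into the $L^2(\rho\,d\rho)$ and $L^\infty$ norms of the statement and integrating over the truncated interval $0\le\rho\le R(t):=10\,t^{-\nu+\varepsilon_1}$. Everything reduces to the elementary integral $I_a(R):=\int_0^R\langle\rho\rangle^{a}\bigl(\ln(2+\rho)\bigr)^2\rho\,d\rho$, which is $\lesssim 1$ when $a<-2$, $\lesssim(\ln R)^3$ when $a=-2$, and $\lesssim R^{a+2}(\ln R)^2$ when $a>-2$; since $\ln R(t)\sim(\nu-\varepsilon_1)|\ln t|$, every $\ln R(t)$ is $\lesssim|\ln t|$ and every $R(t)^{b}$ with $b>0$ is $\lesssim t^{(-\nu+\varepsilon_1)b}$.

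For part~(i) I would start from \eqref{E2.29}, i.e. $|\rho^{-l}\partial_\rho^k Z_{\text{in}}^{(N)}(t)|\le C_{k,l}\,t^{2\nu}\langle\rho\rangle^{1-l-k}\ln(2+\rho)$ with $l\le(3-k)_+$; squaring and multiplying by the weight $\rho$ gives the integrand $\langle\rho\rangle^{2-2(l+k)}(\ln(2+\rho))^2\rho$, so the exponent to feed into $I_a$ is $a=2-2(l+k)$. When $l+k\ge3$ one has $a\le-4<-2$, the integral converges, and $\|\rho^{-l}\partial_\rho^k Z_{\text{in}}^{(N)}(t)\|_{L^2(\rho d\rho)}\lesssim t^{2\nu}$, i.e. \eqref{E2.34}; when $l+k=2$ one has $a=-2$ and the cutoff contributes a logarithmic factor, which is the source of the $(1+|\ln t|)$ in \eqref{E2.33} (and a fortiori of the weaker \eqref{E2.31}); and in the remaining $L^2$ case $l+k=1$ one has $a=0>-2$, the integral is dominated by its endpoint, and $\|\partial_\rho Z_{\text{in}}^{(N)}(t)\|_{L^2(\rho d\rho)}\lesssim t^{2\nu}R(t)|\ln t|=t^{\nu+\varepsilon_1}|\ln t|\le Ct^{\nu}$ for $t$ small, i.e. \eqref{E2.30}. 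The $L^\infty$ bounds \eqref{E2.32}, \eqref{E2.35}, \eqref{E2.36} follow by simply taking the supremum of the right-hand side of \eqref{E2.29} over $0\le\rho\le R(t)$: $\langle\rho\rangle^{1-l-k}\ln(2+\rho)$ is uniformly bounded when $l+k\ge2$ (giving \eqref{E2.36}), is $\lesssim|\ln t|$ when $l+k=1$ (giving \eqref{E2.35}), and is $\lesssim R(t)\ln R(t)\lesssim t^{-\nu+\varepsilon_1}|\ln t|$ when $l+k=0$, which together with the prefactor $t^{2\nu}$ gives \eqref{E2.32}.

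For part~(ii) I would first note that $\mathcal{R}_{\text{in}}^{(N)}$ is built algebraically from $X_N$, $z_{\text{in}}^{(N)}$ and $\gamma_{\text{in}}^{(N)}=\sqrt{1-|z_{\text{in}}^{(N)}|^2}-1$, and that \eqref{E2.29} with $k=l=0$ gives $|z_{\text{in}}^{(N)}|\lesssim t^{\nu+\varepsilon_1}|\ln t|\ll1$ on the inner region, so that $1+\gamma_{\text{in}}^{(N)}\ge\tfrac12$ there and $\mathcal{R}_{\text{in}}^{(N)}$, together with $\partial_t\mathcal{R}_{\text{in}}^{(N)}$, inherits up to harmless constants and products the bound \eqref{E2.27}. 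Running the same squared-and-integrated computation with prefactor $t^{2\nu N}$ and power $\langle\rho\rangle^{2(N+1)-1-l-k}$ gives $a=2(2N+1-l-k)$, a large positive number for $N$ large, so the $\rho$-integral is dominated by its endpoint and $\|\rho^{-l}\partial_\rho^k\mathcal{R}_{\text{in}}^{(N)}(t)\|_{L^2(\rho d\rho)}\lesssim t^{2\nu N}R(t)^{2N+2-(l+k)}|\ln t|$; for $0\le l+k\le3$ the exponent of $t$ here equals $2N\varepsilon_1+O(\nu)+O(\varepsilon_1)$, which exceeds $N\varepsilon_1$ once $N$ is taken large enough relative to $\nu/\varepsilon_1$ (the threshold $N>\varepsilon_1^{-1}$ in the statement being of the right order of magnitude), and this is \eqref{E2.37}; the $\partial_t$-estimate is identical after replacing $t^{2\nu N}$ by $t^{2\nu N-1}$.

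There is no genuine analytic obstacle: the whole lemma is power counting. The step I expect to be the most error-prone, and the one I would organise the write-up around, is the careful tracking of the logarithmic factors and of the negative power of $t$ produced by the cutoff $\rho\le10\,t^{-\nu+\varepsilon_1}$. This is what separates the borderline regime $l+k=2$ (which loses only a power of $|\ln t|$) from the regimes $l+k\le1$ (which gain the genuine factor $t^{\varepsilon_1}$) and $l+k\ge3$ (which lose nothing) in part~(i), and it is what pins down how large $N$ must be, in terms of $\nu$ and $\varepsilon_1$, so that the net exponent in part~(ii) still beats $t^{N\varepsilon_1}$. Isolating the single integral $I_a(R)$ above, with its three regimes $a<-2$, $a=-2$, $a>-2$, and then feeding in the relevant $(k,l)$ — and, for part~(ii), $N$ — reduces everything to bookkeeping.
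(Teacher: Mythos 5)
Your approach is the same as the paper's: the paper offers no separate argument for this lemma beyond asserting that it follows from the pointwise bounds \eqref{E2.27} and \eqref{E2.29} on the inner region, and your squaring-and-integrating of exactly those bounds, organized around the three regimes of $I_a(R)$, is precisely that implicit proof; your treatment of part (ii), including the reduction of $\mathcal{R}_{\text{in}}^{(N)}$ to $X_N$ via $1+\gamma_{\text{in}}^{(N)}\ge\tfrac12$ and the honest remark that the threshold $N>\varepsilon_1^{-1}$ is only of the right order (what the counting really needs is $N\varepsilon_1$ comfortably larger than $2\nu$, harmless since the lemma is invoked with $N$ arbitrarily large), is also consistent with the paper.

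One point does not come out as you state it: in the borderline case $k+l=2$ your own estimate $I_{-2}(R)\lesssim(\ln R)^3$ produces $\|\rho^{-l}\partial_\rho^k Z_{\text{in}}^{(N)}\|_{L^2(\rho d\rho)}\lesssim t^{2\nu}\bigl(1+|\ln t|\bigr)^{3/2}$, not the $t^{2\nu}(1+|\ln t|)$ of \eqref{E2.33}, so the sentence claiming this computation "is the source of the $(1+|\ln t|)$" is off by a half power of the logarithm. The extra half power is genuine for $(k,l)=(0,2)$ and $(1,1)$, since $z^1$ has the leading behavior $c^1_{1,1}\rho\ln\rho$ with $c^1_{1,1}\neq0$ (for $\mathfrak{a}_1\neq0$), so $\rho^{-2}Z_{\text{in}}^{(N)}\sim t^{2\nu}\ln\rho/\rho$ and the $L^2(\rho d\rho)$ norm over $\rho\le R(t)$ really is of size $t^{2\nu}(\ln R)^{3/2}$; thus \eqref{E2.33} as literally printed cannot be obtained by this (or the paper's) route and is itself slightly overstated. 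This is a bookkeeping discrepancy rather than a structural gap: every later use (e.g. \eqref{E2.6}, which only needs a bound $\ll t^{\nu+\frac12}$ for $\nu>\tfrac12$) tolerates the extra $|\ln t|^{1/2}$, but you should either prove \eqref{E2.33} with the exponent $3/2$ on the logarithm or point out explicitly that the weaker statement suffices downstream, rather than assert the printed bound follows from the power counting.
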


\subsection{Self-similar region $rt^{-\frac{1}{2}}\lesssim1$}\label{subsec2.3}
Next, we consider the self-similar region $10t^{\varepsilon_1}\leq rt^{-\frac{1}{2}}\leq10t^{-\varepsilon_2}$, where $0<\varepsilon_2<\frac{1}{2}$ is to be determined. In this region, we want the solution to be close to $\mathbf{k}$. Using the stereographic projection:
$$
(v_1,v_2,v_3)=v\rightarrow w=\frac{v_1+iv_2}{1+v_3}\in\mathbb{C}\cup\{\infty\},
$$
equation \eqref{E2.2} is equivalently transformed into
\begin{equation}\label{E2.38}
iw_t=(\mathfrak{a}_1-i\mathfrak{a}_2)\left[-\Delta w +\frac{1}{r^2}w +G(w,\bar{w},w_r)\right],
\end{equation}
where
$$
G(w,\bar{w},w_r)=\frac{2\bar{w}}{1+|w|^2}\left(w_r^2-\frac{1}{r^2}w^2\right).
$$
Let
\begin{equation}\label{wW}
w(r,t)=e^{i\alpha(t)}W(y,t),\quad y=rt^{-\frac{1}{2}}.
\end{equation}
Then \eqref{E2.38} becomes
\begin{equation}\label{E2.41}
itW_t-\alpha_0W=(\mathfrak{a}_1-i\mathfrak{a}_2)\left[\mathcal{L}W +G\left(W,\bar{W},W_y\right)\right],
\end{equation}
where
$$
\mathcal{L}=-\Delta+\frac{1}{y^2}+\frac{i}{2(\mathfrak{a}_1-i\mathfrak{a}_2)}y\partial_y.
$$
Thus, as $y\rightarrow0$, it follows from \eqref{E2.26} that $W$ has an expansion of the form:
\begin{align}\label{E2.42}
W(y,t)=\sum_{j\geq0}\sum_{l=0}^{2j+1}\sum_{\tilde{i}\geq-j+\frac{l}{2}}& \alpha(j,\tilde{i},l) t^{\nu(2j+1)} \left(\ln y-\nu\ln t\right)^l y^{2\tilde{i}-1},
\end{align}
where the coefficients $\alpha(j,\tilde{i},l)$ can be precisely expressed as terms of $c_{j',l'}^{k,i'}$, $1\leq k\leq j+\tilde{i}$, $j'\leq \tilde{i}$, $0\leq l'\leq l$, here $c_{j,l}^{k,i}$ are defined by \eqref{E2.25}. This inspires us to assume that $W$ has the following form:
\begin{equation}\label{E2.43}
W(y,t)=\sum_{j\geq0}\sum_{l=0}^{2j+1}t^{\nu(2j+1)}\left(\ln y-\nu\ln t\right)^l W_{j,l}(y).
\end{equation}
Substituting \eqref{E2.43} into \eqref{E2.41}, we get a system with respect to $W_{j,l}$:
\begin{equation}\label{E2.44}
\begin{cases}
(\mathfrak{a}_1-i\mathfrak{a}_2)\mathcal{L}W_{0,1}=\mu_0W_{0,1},\\
(\mathfrak{a}_1-i\mathfrak{a}_2)\mathcal{L}W_{0,0}=\mu_0W_{0,0} -i\left(\frac{1}{2}+\nu\right)W_{0,1} +2(\mathfrak{a}_1-i\mathfrak{a}_2)\frac{1}{y}\partial_y W_{0,1},
\end{cases}
\end{equation}
\begin{equation}\label{E2.45}
\begin{cases}
(\mathfrak{a}_1-i\mathfrak{a}_2)\mathcal{L}W_{j,2j+1} =\mu_jW_{j,2j+1} +(\mathfrak{a}_1-i\mathfrak{a}_2)\mathcal{G}_{j,2j+1},\\
(\mathfrak{a}_1-i\mathfrak{a}_2)\mathcal{L}W_{j,2j} =\mu_jW_{j,2j} +(\mathfrak{a}_1-i\mathfrak{a}_2)\mathcal{G}_{j,2j} -\frac{1}{2}i(2j+1)W_{j,2j+1}\\
\quad\quad\quad\quad\quad\quad  -i\nu(2j+1)W_{j,2j+1} +2(2j+1)(\mathfrak{a}_1-i\mathfrak{a}_2) \frac{1}{y}\partial_y W_{j,2j+1},\\
(\mathfrak{a}_1-i\mathfrak{a}_2)\mathcal{L} W_{j,l} =\mu_jW_{j,l} +(\mathfrak{a}_1-i\mathfrak{a}_2)\mathcal{G}_{j,l} -\frac{1}{2}i(l+1)W_{j,l+1}\\
\quad\quad\quad\quad\quad -i\nu(l+1)W_{j,l+1} +2(l+1)(\mathfrak{a}_1-i\mathfrak{a}_2)\frac{1}{y}\partial_yW_{j,l+1}\\
\quad\quad\quad\quad\quad + (l+1)(l+2)(\mathfrak{a}_1-i\mathfrak{a}_2)\frac{1}{y^2}W_{j,l+2}, \quad 0\leq l\leq 2j-1,
\end{cases}
\end{equation}
where $0\leq l\leq 2j+1$, $j\geq0$, $\mu_j=-\alpha_0+i\nu(2j+1)$, and $\mathcal{G}_{j,l}$ come from the nonlinear term $G(W,\bar{W},W_y)$, which depends only on $W_{k,n}$, $k\leq j-1$:
\begin{equation*}
\begin{split}
& G(W,\bar{W},W_y)=-\sum_{j\geq1}\sum_{l=0}^{2j+1}t^{(2j+1)\nu}(\ln y-\nu\ln t)^l\mathcal{G}_{j,l}(y),\\
& \mathcal{G}_{j,l}(y)=\mathcal{G}_{j,l}\left(y;W_{k,n},\, 0\leq n\leq 2k+1,\, 0\leq k\leq j-1\right).
\end{split}
\end{equation*}

Thus, we obtain
\begin{lemma}\label{lem2.4}
Given coefficients $a_j$ and $b_j$, $j\geq0$, there exists a unique solution $W_{j,l}\in C^\infty(\mathbb{R}_+^*)$, $0\leq l\leq 2j+1$, $j\geq0$, to the system \eqref{E2.44}, \eqref{E2.45} such that $W_{j,l}$ has the following asymptotic expansion as $y\rightarrow0$:
\begin{equation}\label{E2.46}
W_{j,l}(y)=\sum_{i\geq-j+\frac{l}{2}}d_{i}^{j,l}y^{2i-1},
\end{equation}
where
\begin{equation}\label{E2.47}
d_1^{j,1}=a_j,\quad d_1^{j,0}=b_j.
\end{equation}
Furthermore, the asymptotic expansion \eqref{E2.46} can be differentiated with respect to $y$ any number of times.
\end{lemma}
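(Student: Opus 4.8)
The plan is to solve the triangular hierarchy \eqref{E2.44}--\eqref{E2.45} by a double recursion --- an outer induction on $j\ge0$ and, for each fixed $j$, a downward induction on $l$ from $2j+1$ down to $0$ --- so that at every stage one confronts a single inhomogeneous linear ODE
\[
(\mathcal{L}-\tilde\mu_j)\,W_{j,l}=H_{j,l},\qquad \tilde\mu_j:=\frac{\mu_j}{\mathfrak{a}_1-i\mathfrak{a}_2},
\]
whose forcing $H_{j,l}$ is assembled entirely from the already-constructed functions $W_{k,n}$ with $k\le j-1$ (through $\mathcal{G}_{j,l}$) and $W_{j,l'}$ with $l'>l$ (through the coupling terms $\tfrac1y\partial_yW_{j,l+1}$ and $\tfrac1{y^2}W_{j,l+2}$). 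For the homogeneous operator, $y=0$ is a regular singular point of $(\mathcal{L}-\tilde\mu_j)f=0$ with indicial polynomial $1-s^2$ (writing $\Delta=\partial_y^2+\tfrac1y\partial_y$), hence the indicial exponents are $\pm1$; Frobenius theory supplies an analytic solution $f_j^1=y(1+O(y^2))$ and a second solution $f_j^2=\gamma_j f_j^1\ln y+y^{-1}(1+O(y^2))$, where $\gamma_j$ is forced to be nonzero --- a pure Frobenius series with leading order $y^{-1}$ would be inconsistent in the $y^{-1}$-slot, since the coefficient $\beta^{(j)}_0$ introduced below is nonzero --- so $f_j^2$ genuinely carries a logarithm. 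Both extend to $C^\infty(0,\infty)$; their $y\to\infty$ behaviour, irrelevant here, is the content of Lemma~\ref{lem2.6}.

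Inserting $\widehat W=\sum_m d_m y^{2m-1}$ and $H=\sum_m h_m y^{2m-1}$ and using $\mathcal{L}(y^{2m-1})=-4m(m-1)y^{2m-3}+\beta^{(j)}_m y^{2m-1}$ with $\beta^{(j)}_m=\tfrac{i(2m-1)}{2(\mathfrak{a}_1-i\mathfrak{a}_2)}-\tilde\mu_j$, the equation becomes the scalar recursion
\[
-4m(m+1)\,d_{m+1}+\beta^{(j)}_m\,d_m=h_m,\qquad m\in\mathbb{Z}.
\]
Because $\nu$ is irrational, $\beta^{(j)}_m=\tfrac1{\mathfrak{a}_1-i\mathfrak{a}_2}\bigl(\alpha_0+i(m-\tfrac12-\nu(2j+1))\bigr)$ never vanishes, so each $d_{m+1}$ is determined by $d_m$ and $h_m$ \emph{except} at the two resonant slots $m=0,-1$, where instead $d_0,d_{-1}$ are pinned by $\beta^{(j)}_0d_0=h_0$ and $\beta^{(j)}_{-1}d_{-1}=h_{-1}$. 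Running the recursion upward from the leading order prescribed in \eqref{E2.46}, the only coefficient that remains genuinely free is $d_1$, the amplitude of the $f_j^1$-mode, and the series contains no logarithm provided $H_{j,l}$ is logarithm-free and its coefficients satisfy the compatibility identities forced at $m=0,-1$. The residual freedom is removed by the normalizations built into \eqref{E2.46}: for $l=1$ and $l=0$ directly via $d_1^{j,1}=a_j$ and $d_1^{j,0}=b_j$, and for the remaining $l$ via the values of $d_1^{j,l}$ that the inner-region re-expansion \eqref{E2.25}--\eqref{E2.26} dictates (equivalently, via those compatibility conditions propagating down the $l$-tower).

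To pass from this formal solution to a genuine one and to get uniqueness: fix $M$ large, let $\widehat W^{(M)}$ be the degree-$(2M-1)$ truncation of the formal series, so that $(\mathcal{L}-\tilde\mu_j)\widehat W^{(M)}-H_{j,l}=O(y^{2M-1})$ near $0$, and solve the remainder equation for $W_{j,l}-\widehat W^{(M)}$ by variation of parameters against $\{f_j^1,f_j^2\}$; the high vanishing order of the error makes the Duhamel integrals converge at $0$, gives the remainder of size $O(y^{2M+1})$ and smoothness on $(0,\infty)$, and --- iterating in $M$ --- produces its asymptotic expansion, so that $W_{j,l}\in C^\infty(0,\infty)$ has the full expansion \eqref{E2.46}, differentiable term by term. (Borel summability of the formal series is an alternative route.) For uniqueness, a difference of two admissible solutions solves the homogeneous equation and has a pure-power expansion of the prescribed leading order with the prescribed $d_1$; writing it as $c_1 f_j^1+c_2 f_j^2$, the logarithm carried by $f_j^2$ forces $c_2=0$, and the leading-order/normalization constraint then forces $c_1=0$.

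The step I expect to be the real obstacle is the inductive verification underlying the second paragraph: as $l$ decreases the forcing $H_{j,l}$ becomes increasingly singular at $y=0$ through the $\tfrac1{y^2}W_{j,l+2}$ terms, and one must check at every stage that no logarithm is generated --- i.e. that the compatibility identities at the resonant slots $m=0,-1$ hold, which relate, e.g., the $y^{-3}$- and $y^{-5}$-coefficients of $H_{j,l}$ --- and that the vanishing orders propagate exactly as recorded in \eqref{E2.25}--\eqref{E2.26}. This is a bookkeeping argument relying on the algebraic structure of $\mathcal{G}_{j,l}$ (in particular the cancellation in the nonlinearity $G$) together with the explicit re-expansion \eqref{E2.26} of the inner profile in the self-similar variable $y=rt^{-1/2}$. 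The irrationality of $\nu$ is essential throughout, since it is exactly what keeps every $\beta^{(j)}_m$ away from zero and thereby prevents further, non-indicial resonances from forcing polynomial-in-$\ln y$ growth.
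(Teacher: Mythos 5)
Your Frobenius set-up is sound and is essentially the paper's: the indicial exponents at $y=0$ are $\pm1$, the second homogeneous solution genuinely carries a logarithm (your condition $\beta^{(j)}_0\neq0$ is equivalent to the paper's $\kappa_j\neq0$ in the basis $\{e_j^1,e_j^2\}$), and the only dangerous slots are the $y^{-1}$ and $y^{-3}$ coefficients, the $y^{-3}$ one being the true obstruction --- exactly the content of Lemma \ref{lem2.5}. The genuine gap is in how you dispose of that obstruction for $2\le l\le 2j+1$, and, correlatively, in your uniqueness argument. The solvability conditions at the $y^{-3}$ slot are \emph{not} identities that hold automatically and can be ``checked by bookkeeping relying on the cancellation in the nonlinearity $G$''; nor are you allowed to fix the free amplitudes $d_1^{j,l}$, $l\ge2$, from the inner-region re-expansion \eqref{E2.25}--\eqref{E2.26}: the lemma's only data are $a_j,b_j$, and its uniqueness with exactly that data is what is used afterwards (in \eqref{E2.54}) to identify the constructed $W^{ss}_{j,l}$ with the matched expansion, so importing \eqref{E2.26} here would be circular. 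What the proof must do --- and what the paper does --- is use the downward coupling in $l$ to \emph{determine} those amplitudes: the free $e_j^1$-amplitude $c_0$ of $W_{j,2j+1}$ feeds through $2(2j+1)\tfrac1y\partial_yW_{j,2j+1}$ into the $y^{-1}$ coefficient of $W_{j,2j}$ (your $m=0$ slot, cf.\ \eqref{E2.53}), which then feeds through $4j\,\tfrac1y\partial_yW_{j,2j}$ into the $y^{-3}$ coefficient of the forcing at level $2j-1$; requiring that obstruction to vanish gives $c_0=\bigl(k_jg^{-1}_{j,2j-1}-2jg^{0}_{j,2j}\bigr)/\bigl(4j(2j+1)\bigr)$, and the same mechanism fixes every amplitude at levels $l\ge2$ from the solvability condition two rungs below, leaving free precisely the two amplitudes at $l=1,0$, which are then $a_j$, $b_j$ by \eqref{E2.47}.

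The same coupling is indispensable for uniqueness, where your argument as written fails for $j\ge1$: the difference of two admissible solutions at level $l=2j+1$ is a multiple $c\,f_j^1$, and there is no ``normalization'' at that level forcing $c=0$, since only $d_1^{j,1}$ and $d_1^{j,0}$ are prescribed. What forces $c=0$ is downward propagation: the multiple $c\,f_j^1$ produces, two rungs below, a $y^{-3}$ term proportional to $c$ in the (now homogeneous, since $\mathcal{G}_{j,l}$ cancels by the induction on $j$) equation at level $2j-1$, which is incompatible with a log-free expansion of the form \eqref{E2.46} unless $c=0$; iterating down the tower and invoking \eqref{E2.47} at the last two levels annihilates the whole difference. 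With this mechanism put in place of your ``compatibility identities to be verified'', the rest of your scheme --- induction on $j$ with the expansions \eqref{E2.50} of $\mathcal{G}_{j,l}$, downward induction on $l$, and truncation plus variation of parameters against $\{f_j^1,f_j^2\}$ to upgrade the formal series to a $C^\infty$ solution whose expansion can be differentiated term by term --- is correct and is essentially the paper's argument.
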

\begin{proof}
Let
$$
\tilde{\mu}_j=\frac{\mu_j}{(\mathfrak{a}_1-i\mathfrak{a}_2)}.
$$
Note that the solutions of $(\mathcal{L}-\tilde{\mu}_j)f=0$ has a basis $\{e_j^1,e_j^2\}$ satisfying
\begin{description}
  \item[(i)] $e_j^1$ is an odd $C^\infty$ function, and $e_j^1(y)=y+O(y^3)$ as $y\rightarrow0$;
  \item[(ii)] $e_j^2\in C^\infty(\mathbb{R}_+^*)$ and it can be expressed as follows:
$$
e_j^2(y)=\frac{1}{y}+\kappa_je_j^1(y)\ln y+\tilde{e}_j^2(y),\quad \kappa_j=-\frac{i}{4(\mathfrak{a}_1-i\mathfrak{a}_2)}-\frac{1}{2}\tilde{\mu}_j,
$$
where $\tilde{e}_j^2$ is an odd $C^\infty$ function, and $\tilde{e}_j^2(y)=O\left(y^3\right)$ as $y\rightarrow0$.
\end{description}

We consider the system \eqref{E2.44}, according to $(\mathcal{L}-\tilde{\mu}_0)W_{0,1}=0$ and \eqref{E2.46} and \eqref{E2.47}, we get
$$
W_{0,1}=a_0e_0^1.
$$
Consider equation of $W_{0,0}$:
\begin{equation}\label{L-mu0}
\left(\mathcal{L}-\tilde{\mu}_0\right)W_{0,0} =-\frac{i}{(\mathfrak{a}_1-i\mathfrak{a}_2)}\left(\frac{1}{2}+\nu\right)W_{0,1} +2\frac{1}{y}\partial_yW_{0,1}.
\end{equation}
Notice that the right hand side of \eqref{L-mu0} has the following form: $2a_0\frac{1}{y}+$ an odd $C^\infty$ function, where the odd function is $O(y)$ as $y\rightarrow0$. Thus, \eqref{L-mu0} has a unique solution $W_{0,0}^0$, which has the following form:
$$
W_{0,0}^0(y)=d_0\frac{1}{y}+\tilde{W}_{0,0}^0(y),
$$
where $d_0=\frac{a_0}{k_0}$, $\tilde{W}_{0,0}^0$ is an odd $C^\infty$ function, and $\tilde{W}_{0,0}^0(y)=O(y^3)$ as $y\rightarrow0$. Combining \eqref{E2.46} and \eqref{E2.47}, we obtain
$$
W_{0,0}=W^0_{0,0}+b_0e^1_0.
$$

For the case of $j\geq1$, we have
\begin{equation}\label{E2.48}
\left(\mathcal{L}-\tilde{\mu}_j\right)W_{j,l}=\mathcal{F}_{j,l},\quad 0\leq l\leq2j+1,
\end{equation}
where
\begin{equation}\label{E2.49}
\begin{split}
\mathcal{F}_{j,2j+1}=&\mathcal{G}_{j,2j+1},\\
\mathcal{F}_{j,2j}=&\mathcal{G}_{j,2j} -\frac{i}{2(\mathfrak{a}_1-i\mathfrak{a}_2)}(2j+1)W_{j,2j+1}\\
&-\frac{i}{\mathfrak{a}_1-i\mathfrak{a}_2}\nu(2j+1)W_{j,2j+1} +2(2j+1) \frac{1}{y}\partial_y W_{j,2j+1},\\
\mathcal{F}_{j,l}=&\mathcal{G}_{j,l} +\frac{i}{2(\mathfrak{a}_1-i\mathfrak{a}_2)}(l+1)W_{j,l+1} -\frac{i}{(\mathfrak{a}_1-i\mathfrak{a}_2)}\nu(l+1)W_{j,l+1}\\
& +2(l+1)\frac{1}{y}\partial_yW_{j,l+1}+ (l+1)(l+2)\frac{1}{y^2}W_{j,l+2}, \quad 0\leq l\leq 2j-1.
\end{split}
\end{equation}
The resolution of \eqref{E2.48} is based on the following ODE lemma.
\begin{lemma}[Lemma \ref{lem2.5} in \cite{Perelman14}]\label{lem2.5}
Let $F$ be a $C^\infty(\mathbb{R}_+^*)$ function of the form:
$$
F(y)=\sum_{j=k}^0F_jy^{2j-1}+\tilde{F}(y),
$$
where $\tilde{F}$ is an odd $C^\infty$ function, $k\leq-1$. Then there exists a unique constant $A$ such that the equation
$$
(\mathcal{L}-\tilde{\mu}_j)u=F+A\frac{1}{y^3}
$$
exists a solution $u\in C^\infty(\mathbb{R}^*_+)$, which has the following asymptotic behavior as $y\rightarrow0$:
$$
u(y)=\sum_{j\geq k+1}u_jy^{2j-1},\quad u_1=0.
$$
\end{lemma}

More precisely, suppose that $W_{i,n}$ has the asymptotic behavior described in \eqref{E2.46} and \eqref{E2.47}, where $0\leq n\leq2i+1$, $i\leq j-1$, then it is not difficult to verify that $\mathcal{G}_{j,l}$ has the following expansion as $y\rightarrow0$:
\begin{equation}\label{E2.50}
\begin{split}
&\mathcal{G}_{j,2j+1}(y)=\sum_{i\geq1}g^i_{j,2j+1}y^{2i-1},\\
&\mathcal{G}_{j,2j}(y)=\sum_{i\geq0}g^i_{j,2j}y^{2i-1},\\
&\mathcal{G}_{j,l}=\sum_{i\geq-j+\frac{l}{2}-1}g^i_{j,l}y^{2i-1},\quad l\leq2j-1.\\
\end{split}
\end{equation}
Consider the equation of $W_{j,2j+1}$: $(\mathcal{L}-\tilde{\mu}_j)W_{j,2j+1} =\mathcal{G}_{j,2j+1}$, we get
\begin{equation}\label{E2.51}
W_{j,2j+1}=W^0_{j,2j+1}+c_0e^1_j,
\end{equation}
where $W^0_{j,2j+1}$ is the unique odd $C^\infty$ solution of $(\mathcal{L}-\tilde{\mu}_j)f =\mathcal{G}_{j,2j+1}$, and $W^0_{j,2j+1}(y)=O(y^3)$ as $y\rightarrow0$, the constant $c_0$ is to be determined.

Note that $\mathcal{F}_{j,2j}$ has the following form:
$$
\left(g_{j,2j}^0+2(2j+1)c_0\right)\frac{1}{y}+ \text{ an odd } C^\infty \text{ function}.
$$
Thus, we get
\begin{equation}\label{E2.52}
W_{j,2j}=W^0_{j,2j}+c_1e^1_j,
\end{equation}
where $W^0_{j,2j}$ is the unique solution of $(\mathcal{L}-\tilde{\mu}_j)f =\mathcal{G}_{j,2j}$ satisfying
\begin{equation}\label{E2.53}
W^0_{j,2j}=d_1\frac{1}{y}+O\left(y^3\right),\quad d_1=\frac{g_{j,2j}^0+2(2j+1)c_0}{2k_j}.
\end{equation}
as $y\rightarrow0$, $c_1$ is a constant similar to $c_0$.

For $\mathcal{F}_{j,2j-1}$, by \eqref{E2.49}, \eqref{E2.50}, \eqref{E2.51}, \eqref{E2.52} and \eqref{E2.53}, we get
$$
\mathcal{F}_{j,2j-1}= \left(g_{j,2j-1}^{-1}-4jd_1\right)\frac{1}{y^3}+const\cdot\frac{1}{y}+ \text{an odd } C^\infty \text{ function},
$$
The constant here can be calculated exactly:
$$
const =g_{j,2j}^{0} -\frac{i}{\mathfrak{a}_1-i\mathfrak{a}_2} j(1-2\nu)d1 +2(2j+1)\tilde{c}_0,\quad \tilde{c}_0=O(1).
$$
However, the refinement of this constant has no effect on the proof of our main result, and we will continue to use the notation $const$, which may have different values, but in principle it can also be calculated exactly.

Notice that $\frac{1}{y^3}$ does not appear in $(\mathcal{L}-\tilde{\mu}_j)W_{j,2j-1}$, where $W_{j,2j-1}$ is defined as \eqref{E2.46}. Thus, the equation $(\mathcal{L}-\tilde{\mu}_j)W_{j,2j-1}=\mathcal{F}_{j,2j-1}$ has a solution of the form \eqref{E2.46} if and only if
$$
g_{j,2j-1}^{-1}-4jd_1=0.
$$
By \eqref{E2.53}, we get
$$
c_0=\frac{k_jg_{j,2j-1}^{-1}-2jg^0_{j,2j}}{4j(2j+1)}.
$$
By the choice of $c_0$, we get
$$
W_{j,2j-1}=W^0_{j,2j-1}+c_2e^1_j,
$$
where $W^0_{j,2j-1}$ is the unique solution of $(\mathcal{L}-\tilde{\mu}_j)f =\mathcal{F}_{j,2j-1}$ that satisfies
$$
W^0_{j,2j-1}=const\cdot\frac{1}{y}+O\left(y^3\right)
$$
as $y\rightarrow\infty$. Continuing the above process, we obtain $W_{j,2j-2},\cdots,W_{j,0}$, which has the form $W_{j,2j+1-k}=W^0_{j,2j+1-k}+c_ke_j^1$, $k\leq2j+1$, where $W^0_{j,2j+1-k}$ is the unique solution of $(\mathcal{L}-\tilde{\mu}_j)f=\mathcal{F}_{j,2j+1-k}$ that has expansion of the form \eqref{E2.46} with zero coefficient $d_1^{j,l}$ as $y\rightarrow0$, and the constants $c_k$, $k\leq2j-1$, are uniquely determined by the solvability conditions on the equation of $W_{j,2j-k-1}$ (see Lemma \ref{lem2.5}). Finally, $c_{2j+1}$ and $c_{2j+2}$ are given by \eqref{E2.47}, i.e., $c_{2j+1}=a_j$ and $c_{2j+2}=b_j$.
\end{proof}

Let $W_{j,l}^{ss}(y)$ be the solution of the system \eqref{E2.44}--\eqref{E2.45} (see Lemma \ref{lem2.4}), where $0\leq l\leq2j+1$, $j\geq0$, $a_j=\alpha(j,1,1)$, $b_j=\alpha(j,1,0)$. Here $\alpha(i,j,k)$ is defined by \eqref{E2.42}. Since the expansion \eqref{E2.42} is a solution of \eqref{E2.41}, the uniqueness of Lemma \ref{lem2.4} guarantees that
\begin{equation}\label{E2.54}
W^{ss}_{j,l}(y)=\sum_{i\geq-j+\frac{l}{2}}\alpha(j,i,l)y^{2i-1},\quad\text{as}\,\, y\rightarrow0.
\end{equation}

Next, we study the asymptotic behavior of $W_{j,l}^{ss}$ at infinity, where $0\leq l\leq2j+1$ and $j\geq0$. We get
\begin{lemma}\label{lem2.6}
Given coefficients $a_{j,l}$ and $b_{j,l}$, where $0\leq l\leq2j+1$, $j\geq0$, the system \eqref{E2.44}--\eqref{E2.45} has a unique solution of the form:
\begin{align}
W_{0,l}=&W_{0,l}^0+W_{0,l}^1,\quad l=0,1,\label{E2.55}\\
W_{j,l}=&W_{j,l}^0+W_{j,l}^1+W_{j,l}^2,\quad 0\leq l\leq2j+1,\,\, j\geq1,\label{E2.56}
\end{align}
where $(W_{j,l}^{\tilde{i}})_{0\leq l\leq2j+1\atop j\geq1}$, $\tilde{i}=0,1$, are two solutions of the system \eqref{E2.44} and \eqref{E2.45} that has the following asymptotic behavior as $y\rightarrow\infty$:
\begin{equation}\label{E2.57}
\begin{split}
&\sum_{l=0}^{2j+1}(\ln y-\nu\ln t)^lW_{j,l}^{\tilde{i}}(y)= \sum_{l=0}^{2j+1}\left(\ln y+(-1)^{\tilde{i}}\frac{\ln t}{2}\right)^l \hat{W}^{\tilde{i}}_{j,l}(y),\quad \tilde{i}=0,1,\\
&\hat{W}^0_{j,l}(y)=y^{2i\alpha_0+2\nu(2j+1)} \sum_{k\geq0}\hat{w}_k^{j,l,0}y^{-2k},\\
&\hat{W}^1_{j,l}(y)=e^{\frac{iy^2}{4(\mathfrak{a}_1-i\mathfrak{a}_2)}} y^{-2i\alpha_0-2\nu(2j+1)-2} \sum_{k\geq0}\hat{w}_k^{j,l,-1}y^{-2k},
\end{split}
\end{equation}
where
\begin{equation}\label{E2.58}
\hat{w}_0^{j,l,0}=a_{j,l},\quad \hat{w}_0^{j,l,-1}=b_{j,l}.
\end{equation}
Finally, the interaction part $W^2_{j,l}$ can be written as
\begin{equation}\label{E2.59}
W^2_{j,l}(y)=\sum_{-j-1\leq m\leq j}e^{-\frac{imy^2}{4(\mathfrak{a}_1-i\mathfrak{a}_2)}} W_{j,l,m}(y),
\end{equation}
where $W_{j,l,m}$ has the following asymptotic behavior as $y\rightarrow\infty$:
\begin{equation}\small\label{E2.60}
\begin{split}
&W_{j,l,m}(y)=\sum_{k\geq m+2}\sum_{m-j\leq \tilde{i}\leq j-m\atop j-m-\tilde{i}\in2\mathbb{Z}} \sum_{s=0}^{2j+1-l} w_{k,\tilde{i},s}^{j,l,m} y^{2\nu(2i+1) -2k}(\ln y)^s,\quad m\geq1,\\
&W_{j,l,m}(y)=\sum_{k\geq -m}\sum_{-j-m-2\leq \tilde{i}\leq j+m\atop j-m-\tilde{i}\in2\mathbb{Z}} \sum_{s=0}^{2j+1-l} w_{k,\tilde{i},s}^{j,l,m} y^{2\nu(2i+1) -2k}(\ln y)^s,\quad m\leq-2,\\
&W_{j,l,0}(y)=\sum_{k\geq 1}\sum_{-j\leq \tilde{i}\leq j-2\atop j-\tilde{i}\in2\mathbb{Z}} \sum_{s=0}^{2j+1-l} w_{k,\tilde{i},s}^{j,l,0} y^{2\nu(2i+1)-2k}(\ln y)^s,\\
&W_{j,l,-1}(y)=\sum_{k\geq 1}\sum_{-j+1\leq \tilde{i}\leq j-1\atop j-\tilde{i}\in2\mathbb{Z}} \sum_{s=0}^{2j+1-l} w_{k,\tilde{i},s}^{j,l,-1} y^{2\nu(2i+1)-2k}(\ln y)^s.
\end{split}
\end{equation}
Furthermore, the asymptotic expansions \eqref{E2.57} and \eqref{E2.60} can be differentiated with respect to $y$ any number of times. In addition, any solution of the system \eqref{E2.44}, \eqref{E2.45} has the form \eqref{E2.55}, \eqref{E2.56}, \eqref{E2.57}, \eqref{E2.59} and \eqref{E2.60}.
\end{lemma}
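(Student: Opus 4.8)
The plan is to analyze the asymptotic behavior at $y\to\infty$ of the equations \eqref{E2.44}--\eqref{E2.45} through a WKB-type analysis of the operator $\mathcal{L}-\tilde\mu_j$, and then feed the resulting asymptotic information into an inductive construction in $j$. First I would study the homogeneous equation $(\mathcal{L}-\tilde\mu_j)f=0$ at infinity. Since $\mathcal{L}=-\Delta+\frac{1}{y^2}+\frac{i}{2(\mathfrak{a}_1-i\mathfrak{a}_2)}y\partial_y$, the leading balance at large $y$ is between $y\partial_y f$ and $\Delta f\sim\partial_y^2 f$, so there are two asymptotic regimes: a ``slow'' (polynomial) one where $\partial_y^2 f$ is negligible, giving $f\sim y^{2i\alpha_0+2\nu(2j+1)}(1+O(y^{-2}))$ after matching the eigenvalue $\tilde\mu_j=\mu_j/(\mathfrak{a}_1-i\mathfrak{a}_2)$, and a ``fast'' (oscillatory/decaying) one where $f\sim e^{iy^2/(4(\mathfrak{a}_1-i\mathfrak{a}_2))}y^{-2i\alpha_0-2\nu(2j+1)-2}(1+O(y^{-2}))$. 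This gives the two branches $\hat W^0$ and $\hat W^1$ in \eqref{E2.57}; the precise exponents are fixed by requiring the formal series in $y^{-2}$ to be consistent order by order, and the fact that the expansions can be differentiated follows because each branch is an honest asymptotic solution of a second-order ODE with the error controlled by a contraction/variation-of-parameters argument (as in \cite{Perelman14}).

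Next I would handle the inhomogeneous system by induction on $j$. For $j=0$ there is no nonlinear source ($\mathcal{G}_{0,l}=0$), so $W_{0,l}$ is a combination of the two homogeneous branches and \eqref{E2.55} holds directly. For $j\geq1$, assume \eqref{E2.56}--\eqref{E2.60} hold for all $i\le j-1$. Then $\mathcal{G}_{j,l}$ is built from products of the $W_{i,n}$ with $i\le j-1$ and their derivatives, together with the factor $\frac{2\bar W}{1+|W|^2}$; expanding $(1+|W|^2)^{-1}$ geometrically and multiplying out, every term is a product of three factors each of which is (by induction) a finite sum $\sum_m e^{-imy^2/(4(\mathfrak{a}_1-i\mathfrak{a}_2))}(\text{power}\times\text{log})$, where the phases add. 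Counting the admissible phases $m$ in such a triple product — using that the ``fast'' branch carries phase $-1$, its conjugate carries $+1$, and the ``slow'' branch carries $0$, and that the total equivariance weight is tracked by the parity condition $j-m-\tilde i\in2\mathbb{Z}$ — yields exactly the phase range $-j-1\le m\le j$ in \eqref{E2.59} and the index ranges in \eqref{E2.60}. The powers of $y$ and $\ln y$ are then determined by substituting the ansatz $W_{j,l,m}=e^{-imy^2/(4(\mathfrak{a}_1-i\mathfrak{a}_2))}(\cdots)$ into $(\mathcal{L}-\tilde\mu_j)W_{j,l}=\mathcal{F}_{j,l}$: the phase $e^{-imy^2/\cdots}$ converts $\mathcal{L}-\tilde\mu_j$ into a shifted operator whose leading symbol at $y\to\infty$ is (for $m\neq 0,-1$) an invertible algebraic multiplier $\propto(m+1)$ or $m$ (this is where the shifts $k\ge m+2$ and $k\ge-m$ come from), so one solves term-by-term for a formal series; for $m=0,-1$ the leading symbol degenerates and one instead inverts the second-order operator, picking up the extra $\ln y$ powers and the two homogeneous branches, which is why $W^0$ and $W^1$ appear separately and why the series for $W_{j,l,0}$, $W_{j,l,-1}$ start at $k\ge1$.

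The main obstacle will be the bookkeeping for $m=0$ and $m=-1$, i.e. the non-oscillatory part of the interaction. There the operator $\mathcal{L}-\tilde\mu_j$ does not become algebraically invertible at infinity, so one must genuinely solve a second-order ODE with a polynomial-times-logarithm right-hand side, track how the log degree increases by at most one at each step of the descending recursion in $l$ (hence the $s\le 2j+1-l$ bound), and — crucially — verify that the solvability/matching obstructions are exactly absorbed by the already-free constants $c_k$ multiplying $e_j^1$ identified in Lemma \ref{lem2.4}, so that no spurious $y^{-3}$-type terms survive. A secondary technical point is justifying that all the formal asymptotic series are genuine asymptotic expansions that survive differentiation; this is done once and for all by the standard reduction to a fixed-point problem for the remainder after truncating the series at high order, combined with the smoothing properties of $(\mathcal{L}-\tilde\mu_j)^{-1}$ on $\mathbb{R}_+^*$. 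Finally, the uniqueness claim follows because any solution must have an asymptotic expansion of one of these types (the ODE at infinity has exactly these formal solutions), and the coefficients $a_{j,l},b_{j,l}$ together with the interior normalization from Lemma \ref{lem2.4} pin it down uniquely.
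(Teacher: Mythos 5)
Your overall strategy does coincide with the paper's: identify the two asymptotic branches (polynomial and $e^{iy^2/(4(\mathfrak{a}_1-i\mathfrak{a}_2))}$-oscillatory) of $(\mathcal{L}-\tilde{\mu}_j)f=0$ at infinity, induct on $j$, decompose $\mathcal{G}_{j,l}$ into phases $e^{-imy^2/(4(\mathfrak{a}_1-i\mathfrak{a}_2))}$ with $-j-1\le m\le j$, invert algebraically for $m\neq 0,-1$ and through the ODE for $m=0,-1$, and fix the free constants by the prescribed leading coefficients \eqref{E2.58}. However, there is a genuine gap in your treatment of the homogeneous part. For fixed $j$ the unknowns $W_{j,0},\dots,W_{j,2j+1}$ satisfy the coupled triangular system \eqref{E2.44}--\eqref{E2.45} (the equation for $W_{j,l}$ involves $W_{j,l+1}$ and $W_{j,l+2}$), so the homogeneous solution space is $2(2j+2)$-dimensional and its elements are not produced by a scalar two-branch WKB analysis of $(\mathcal{L}-\tilde{\mu}_j)f=0$: their large-$y$ expansions contain $\ln y$ powers that mix across the different $l$'s. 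The content of \eqref{E2.57} is precisely that, after resumming with the weights $(\ln y-\nu\ln t)^l$, these homogeneous solutions reorganize into pure power series $\hat{W}^{\tilde{i}}_{j,l}$ carried by $\bigl(\ln y+(-1)^{\tilde{i}}\tfrac{\ln t}{2}\bigr)^l$. The paper establishes this by constructing an explicit basis $\mathbf{g}_j^{\tilde{i},m}$ of the homogeneous system through the generating-function identity \eqref{E2.62} and the auxiliary system \eqref{E2.63}--\eqref{E2.64}; your proposal never explains where the combinations $\ln y\pm\tfrac12\ln t$ (i.e. $\ln r$ and $\ln(r/t)$, which are exactly what makes the later matching with the remote region work) come from, and the order-by-order consistency argument for a single scalar equation does not yield them.

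A secondary point: the "main obstacle" you anticipate --- solvability obstructions to be absorbed by the constants $c_k$ of Lemma \ref{lem2.4} --- does not occur in this lemma. Lemma \ref{lem2.4} concerns the expansion at $y\to0$, where the $y^{-3}$ obstruction of Lemma \ref{lem2.5} appears; at $y\to\infty$ the particular solutions $\tilde{W}^m_{j,l}$ for each phase exist with no resonance condition (the exponents $2\nu(2i+1)-2k$ are nonresonant since $\nu$ is taken irrational), and the only freedom is the $2(2j+2)$ constants $A_{\tilde{i},m}$ multiplying the homogeneous basis, which are pinned down by \eqref{E2.58}. So that worry is misplaced, though harmless; the substantive missing ingredient is the homogeneous-basis construction described above.
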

\begin{proof}
Note that the solutions $(\mathcal{L}-\tilde{\mu}_j)f=0$ has a basis $\{f_j^1,f_j^2\}$, which have the following asymptotic behavior at infinity:
\begin{align*}
&f_j^1(y)=y^{-2i\tilde{\mu}_j(\mathfrak{a}_1-i\mathfrak{a}_2)} \sum_{k\geq0}f_{j,1}^ky^{-2k},\\
&f_j^2(y)=e^{\frac{iy^2}{4(\mathfrak{a}_1-i\mathfrak{a}_2)}} y^{2i\tilde{\mu}_j(\mathfrak{a}_1-i\mathfrak{a}_2)-2} \sum_{k\geq0}f_{j,2}^ky^{-2k},
\end{align*}
where $f_{j,1}^0=f_{j,2}^0=1$. Thus, the solutions of the homogeneous equations
\begin{equation}\label{E2.61}
\begin{cases}
\mathcal{L}g_{2j+1}=\tilde{\mu}_jg_{2j+1},\\
\mathcal{L}g_{2j}=\tilde{\mu}_jg_{2j} -\frac{i}{2(\mathfrak{a}_1-i\mathfrak{a}_2)}(2j+1)g_{2j+1}\\
\quad\quad\quad-\frac{i}{\mathfrak{a}_1-i\mathfrak{a}_2}\nu(2j+1)g_{2j+1} +2(2j+1) \frac{1}{y}\partial_y g_{2j+1},\\
\mathcal{L}g_{l}=\tilde{\mu}_jg_{l} -\frac{i}{2(\mathfrak{a}_1-i\mathfrak{a}_2)}(l+1)g_{l+1}-\frac{i}{\mathfrak{a}_1-i\mathfrak{a}_2}\nu(l+1)g_{l+1}\\
\quad\quad\quad+2(l+1)\frac{1}{y}\partial_yg_{l+1} +(l+1)(l+2)\frac{1}{y^2}g_{l+2}, \quad 0\leq l\leq 2j-1.
\end{cases}
\end{equation}
have a basis $\{\mathbf{g}_j^{\tilde{i},m}\}_{m=0,\cdots,2j+1}^{\tilde{i}=1,2}$, where
$$
\mathbf{g}_{j}^{\tilde{i},m}=\left(g^{\tilde{i},m}_{j,0},\cdots,g^{\tilde{i},m}_{j,2j+1}\right),\quad 0\leq m\leq2j+1,\,\, \tilde{i}=1,2.
$$
Here each component is defined as
\begin{equation}\label{E2.62}
\begin{split}
\sum_{l=0}^{2j+1}(\ln y-\nu\ln t)^l g^{\tilde{i},m}_{j,l}(y)
=\sum_{l=0}^{2j+1}\left(\ln y+\frac{(-1)^{\tilde{i}-1}}{2}\ln t\right)^l\xi^{\tilde{i},m}_{j,l}(y),
\end{split}
\end{equation}
where $(\xi_{j,l}^{\tilde{i},m})_{l=0,\cdots,2j+1}$ is the unique solution of the following system:
\begin{equation}\label{E2.63}
\begin{cases}
\mathcal{L}\xi_{2j+1}=\tilde{\mu}_j\xi_{2j+1},\\
\mathcal{L}\xi_{2j}=\tilde{\mu}_j\xi_{2j}-\frac{i}{\mathfrak{a}_1-i\mathfrak{a}_2}(2j+1)\left(\tilde{i} -1\right)\xi_{2j+1} +2(2j+1)\frac{1}{y}\partial_y\xi_{2j+1},\\
\mathcal{L}\xi_{l}=\tilde{\mu}_j\xi_{l}-\frac{i}{\mathfrak{a}_1-i\mathfrak{a}_2}(l+1)\left(\tilde{i} -1\right)\xi_{l+1} +2(l+1)\frac{1}{y}\partial_y\xi_{l+1}\\
\quad\quad\quad +(l+1)(l+2)\frac{1}{y^2}\xi_{l+2},\quad 0\leq l\leq 2j-1,
\end{cases}
\end{equation}
that satisfies
\begin{equation}\label{E2.64}
\begin{split}
&\xi_{j,l}^{\tilde{i},m}(y)=0,\quad l>2j+1-m,\\
&\xi_{j,2j+1-m}^{\tilde{i},m}(y)=f_j^{\tilde{i}}(y),\\
&\xi_{j,l}^{1,m}(y)=y^{2i\alpha_0+2\nu(2j+1)} \sum_{k\geq2j+1-l-m}\xi_{l,k}^1y^{-2k},\quad y\rightarrow+\infty,\\
&\xi_{j,l}^{2,m}(y)=e^{\frac{iy^2}{4(\mathfrak{a}_1-i\mathfrak{a}_2)}} y^{-2i\alpha_0-2\nu(2j+1)-2} \sum_{k\geq2j+1-l-m} \xi_{l,k}^2 y^{-2k},\quad y\rightarrow+\infty.
\end{split}
\end{equation}

For $W_{0,l}$, $l=0,1$, we have
\begin{align*}
&\mathcal{L}W_{0,1}=\tilde{\mu}_0W_{0,1},\\
&\mathcal{L}W_{0,0}=\tilde{\mu}_0W_{0,0} -\frac{i}{\mathfrak{a}_1-i\mathfrak{a}_2}\left(\frac{1}{2} +\nu\right)W_{0,1}+2\frac{1}{y}\partial_y W_{0,1}.
\end{align*}
Thus,
$$
W_{0,l}(y)=\sum_{\tilde{i}=1,2\atop m=0,1} A_{\tilde{i},m}g_{0,l}^{\tilde{i},m}(y),\quad l=0,1,
$$
where $A_{\tilde{i},m}$ are constants, $\tilde{i}=1,2$, $m=0,1$. According to \eqref{E2.62} and \eqref{E2.64}, we obtain that $W_{0,l}$, $l=0,1$ have the form \eqref{E2.55} and \eqref{E2.57}, where $\hat{w}_0^{j,l,0}=A_{1,1-l}$ and $\hat{w}_0^{j,l,-1}=A_{2,1-l}$, $l=0,1$. This combined with \eqref{E2.58} yields $A_{1,m}=a_{0,l-m}$ and $A_{2,m}=b_{0,1-m}$, $m=0,1$.

Next, we consider the case of $j\geq1$. Suppose that $W_{\tilde{i},n}$, $0\leq n\leq2\tilde{i}+1$, $\tilde{i}\leq j-1$ have has the asymptotic behavior pre-described in \eqref{E2.56}, \eqref{E2.57}, \eqref{E2.59} and \eqref{E2.60}, then it can be verified that $(\mathfrak{a}_1-i\mathfrak{a}_2)\mathcal{G}_{j,l}$ has the following form:
\begin{equation}\label{E2.65}
(\mathfrak{a}_1-i\mathfrak{a}_2)\mathcal{G}_{j,l}(y)=\sum_{-j-1\leq m\leq j}e^{-\frac{imy^2}{4(\mathfrak{a}_1-i\mathfrak{a}_2)}} \mathcal{G}^m_{j,l}(y),
\end{equation}
where $\mathcal{G}_{j,l}^m$, $m=0,-1$, satisfy
\begin{equation}\label{E2.66}
\begin{split}
&\mathcal{G}_{j,l}^m(y)=\mathcal{G}_{j,l}^{m,0}(y) +\mathcal{G}_{j,l}^{m,1}(y),\quad m=0,-1,\\
&\mathcal{G}_{j,l}^{0,0}(y)=\mathcal{G}_{j,l}\left(y; W_{\tilde{i},n}^0, 0\leq n\leq2\tilde{i}+1, 0\leq \tilde{i}\leq j-1\right),\\
&e^{\frac{iy^2}{4(\mathfrak{a}_1-i\mathfrak{a}_2)}}\mathcal{G}_{j,l}^{-1,0}(y) =\mathcal{G}_{j,l}\left(y; W_{\tilde{i},n}^1, 0\leq n\leq2\tilde{i}+1, 0\leq \tilde{i}\leq j-1\right),
\end{split}
\end{equation}
and have the following asymptotic behavior as $y\rightarrow\infty$:
\begin{equation}\label{E2.67}
\begin{split}
&\mathcal{G}_{j,l}^{0,0}(y)=\sum_{k\geq1}\sum_{s=0}^{2j+1-l}T_{k,j,s}^{j,l,0}  y^{2\nu(2j+1)-2k}(\ln y)^s,\\
&\mathcal{G}_{j,l}^{0,1}(y)=\sum_{k\geq2}\sum_{-j\leq \tilde{i}\leq j-2 \atop j-\tilde{i}\in2\mathbb{Z}}\sum_{s=0}^{2j+1-l} T_{k,j,s}^{j,l,0}y^{2\nu(2i+1) -2k}(\ln y)^s,
\end{split}
\end{equation}

\begin{equation}\label{E2.68}
\begin{split}
&\mathcal{G}_{j,l}^{-1,0}(y)=\sum_{k\geq2}\sum_{s=0}^{2j+1-l}T_{k,-j-1,s}^{j,l,-1} y^{-2\nu(2j+1)-2k}(\ln y)^s,\\
&\mathcal{G}_{j,l}^{-1,1}(y)=\sum_{k\geq1}\sum_{-j+1\leq \tilde{i}\leq j-1 \atop j-\tilde{i}+1\in2\mathbb{Z}}\sum_{s=0}^{2j+1-l} T_{k,i,s}^{j,l,-1}y^{2\nu(2i+1)-2k}(\ln y)^s.
\end{split}
\end{equation}
Finally, $\mathcal{G}_{j,l}^m$, $m\neq0, -1$, have the following asymptotic behavior as $y\rightarrow\infty$:
\begin{equation}\label{E2.69}
\begin{split}
&\mathcal{G}_{j,l}^{m}(y)=\sum_{k\geq m+1} \sum_{m-j\leq \tilde{i}\leq j-m\atop j-m-\tilde{i}\in\mathbb{Z}} \sum_{s=0}^{2j+1-l}T_{k,\tilde{i},s}^{j,l,m} y^{2\nu(2i+1)-2k}(\ln y)^s,\quad m\geq1,\\
&\mathcal{G}_{j,l}^{m}(y)=\sum_{k\geq|m|-1}\sum_{-j-m-2\leq \tilde{i}\leq j+m \atop j-\tilde{i}+m\in2\mathbb{Z}}\sum_{s=0}^{2j+1-l} T_{k,\tilde{i},s}^{j,l,m} y^{2\nu(2i+1)-2k}(\ln y)^s,\quad m\leq-2.
\end{split}
\end{equation}
Thus, integrating \eqref{E2.45} yields
\begin{equation}\label{E2.70}
\begin{split}
&W_{j,l}=\tilde{W}_{j,l}+\sum_{\tilde{i}=1,2\atop m=0,\cdots, 2j+1}A_{\tilde{i},m}\mathbf{g}_{j,l}^{\tilde{i},m},\\
&\tilde{W}_{j,l}(y)=\sum_{-j-1\leq m\leq j}e^{-\frac{imy^2}{4(\mathfrak{a}_1-i\mathfrak{a}_2)}} \tilde{W}^m_{j,l}(y),
\end{split}
\end{equation}
where $e^{-\frac{imy^2}{4(\mathfrak{a}_1-i\mathfrak{a}_2)}} \tilde{W}^m_{j,l}(y)$ is the unique solution of \eqref{E2.45} with $\mathcal{G}_{j,l}$ replaced by $e^{-\frac{imy^2}{4(\mathfrak{a}_1-i\mathfrak{a}_2)}} \mathcal{G}^m_{j,l}(y)$ that has the following asymptotic behavior as $y\rightarrow+\infty$:
\begin{equation}\label{E2.71}
\begin{split}
&\tilde{W}_{j,l}^m=\sum_{k\geq m+2}\sum_{m-j\leq \tilde{i}\leq j-m \atop j-\tilde{i}-m\in2\mathbb{Z}}\sum_{s=0}^{2j+1-l}\tilde{w}_{k,\tilde{i},s}^{j,l,m} y^{2\nu(2i+1)-2k}(\ln y)^s,\quad m\geq1,\\
&\tilde{W}_{j,l}^m(y)=\sum_{k\geq -m}\sum_{-j-m-2\leq \tilde{i}\leq j+m \atop j-m-\tilde{i}\in2\mathbb{Z}}\sum_{s=0}^{2j+1-l} \tilde{w}_{k,\tilde{i},s}^{j,l,m} y^{2\nu(2i+1)-2k}(\ln y)^s,\quad m\leq-2.
\end{split}
\end{equation}
Finally, for the case of $m=0,-1$, we have
\begin{equation}\label{E2.72}
\begin{split}
&\tilde{W}_{j,l}^0=\tilde{W}_{j,l}^{0,0}(y) +\tilde{W}_{j,l}^{0,1}(y),\\
&\tilde{W}_{j,l}^{-1}=\tilde{W}_{j,l}^{-1,0}(y) +\tilde{W}_{j,l}^{-1,1}(y),
\end{split}
\end{equation}
where $\tilde{W}_{j,l}^{0,\tilde{i}}$ and $e^{\frac{iy^2}{4(\mathfrak{a}_1-i\mathfrak{a}_2)}}\tilde{W}_{j,l}^{-1,\tilde{i}}$ are solutions of \eqref{E2.45} with $\mathcal{G}_{j,l}$ replaced by $\mathcal{G}_{j,l}^{0,\tilde{i}}$ and $e^{\frac{iy^2}{4(\mathfrak{a}_1-i\mathfrak{a}_2)}}\mathcal{G}_{j,l}^{-1,\tilde{i}}$, respectively. They have the following asymptotic behavior as $y\rightarrow\infty$:
\begin{equation}\label{E2.73}
\begin{split}
&\tilde{W}_{j,l}^{0,0}(y)=\sum_{k\geq1}\sum_{s=0}^{2j+1-l} \tilde{w}_{k,j,s}^{j,l,0} y^{2\nu(2j+1)-2k} (\ln y)^s,\\
&\tilde{W}_{j,l}^{0,1}(y)=\sum_{k\geq1} \sum_{-j\leq \tilde{i}\leq j-2\atop j-\tilde{i}\in2\mathbb{Z}} \sum_{s=0}^{2j+1-l} \tilde{w}_{k,\tilde{i},s}^{j,l,0} y^{2\nu(2i+1)-2k} (\ln y)^s,\\
&\tilde{W}_{j,l}^{-1,0}(y)= \sum_{k\geq1}\sum_{s=0}^{2j+1-l} \tilde{w}_{k,-j-1,s}^{\tilde{i},l,-1} y^{-2\nu(2j+1)-2k} (\ln y)^s,\\
&\tilde{W}_{j,l}^{-1,1}(y)=\sum_{k\geq1} \sum_{-j+1\leq \tilde{i}\leq j-1\atop j-\tilde{i}\in2\mathbb{Z}} \sum_{s=0}^{2j+1-l} \tilde{w}_{k,\tilde{i},s}^{j,l,-1} y^{2\nu(2i+1)-2k} (\ln y)^s.
\end{split}
\end{equation}
Note that $W_{j,l}^0=\tilde{W}_{j,l}^{0,0} +\sum_{m=0}^{2j+1}A_{1,m}\mathbf{g}_{j,l}^{1,m}$ and $W_{j,l}^1=e^{-\frac{imy^2}{4(\mathfrak{a}_1-i\mathfrak{a}_2)}}\tilde{W}^{-1,0}_{j,l} +\sum_{m=0}^{2j+1}A_{2,m}\mathbf{g}_{j,l}^{2,m}$ are solutions of \eqref{E2.45} with $\mathcal{G}_{j,l}$ replaced by $\mathcal{G}_{j,l}^{0,0}=\mathcal{G}_{j,l}(W_{\tilde{i},n}^0, \tilde{i}\leq j-1)$ and  $e^{\frac{iy^2}{4(\mathfrak{a}_1-i\mathfrak{a}_2)}}\mathcal{G}_{j,l}^{-1,0} =\mathcal{G}_{j,l}(W_{\tilde{i},n}^1, \tilde{i}\leq j-1)$, respectively. Thus, $W_{j,l}^{\tilde{i}}$, $\tilde{i}=0,1$, $0\leq l\leq2j+1$, have the form \eqref{E2.58}, where $\tilde{w}_0^{j,l,\tilde{i}}=A_{2j+1-l}^{1-\tilde{i}}$, $\tilde{i}=0,-1$, $l=0,\cdots,2j+1$. This combined with \eqref{E2.58} gives $A_{1,m}=a_{j,2j+1-m}$ and $A_{2,m}=b_{j,2j+1-m}$, where $m=0,\cdots,2j+1$.
\end{proof}

Let $W_{\text{in}}^{(N)}(y,t)$ be the stereographic projection of
$$
V_{\text{in}}^{(N)}\left(t^{-\nu}y,t\right) =\left(V_{\text{in},1}^{(N)}\left(t^{-\nu}y,t\right), V_{\text{in},2}^{(N)}\left(t^{-\nu}y,t\right), V_{\text{in},3}^{(N)}\left(t^{-\nu}y,t\right)\right),
$$
i.e.,
$$
W_{\text{in},1}^{(N)}(y,t) =\frac{V_{\text{in},1}^{(N)}\left(t^{-\nu}y,t\right) +iV_{\text{in},2}^{(N)}\left(t^{-\nu}y,t\right)}{1 +V_{\text{in},3}^{(N)}\left(t^{-\nu}y,t\right)} \in \mathbb{C}\cup\{\infty\}.
$$
Recalling the coordinate transformation \eqref{wW}, for $N\geq2$, we define
\begin{align*}
&W_{ss}^{(N)}(y,t)= \sum_{j=0}^N\sum_{l=0}^{2j+1} t^{\nu(2j+1)}\left(\ln\rho\right)^lW_{j,l}^{ss}(y),\\
&A_{ss}^{(N)}= -it\partial_tW_{ss}^{(N)} +\alpha_0W_{ss}^{(N)} +(\mathfrak{a}_1-i\mathfrak{a}_2)\left[\mathcal{L} W_{ss}^{(N)} +G\left(W_{ss}^{(N)}, \bar{W}_{ss}^{(N)}, \partial_yW_{ss}^{(N)}\right)\right],\\
&V_{ss}^{(N)}(\rho,t)= \left( \frac{2\operatorname{Re}\left(W_{ss}^{(N)}\right)}{1+\left|W_{ss}^{(N)}\right|^2}, \frac{2\operatorname{Im}\left(W_{ss}^{(N)}\right)}{1+\left|W_{ss}^{(N)}\right|^2}, \frac{1-\left|W_{ss}^{(N)}\right|^2}{1+\left|W_{ss}^{(N)}\right|^2} \right),\quad \rho=t^{-\nu}y,\\
&Z_{ss}^{(N)}(\rho,t)=V_{ss}^{{N}}(\rho,t)-Q(\rho).
\end{align*}

Fixing $\varepsilon_1=\frac{\nu}{2}$, according to the previous analysis, we obtain
\begin{lemma}\label{lem2.7}
For $0<t\leq T(N)$, there exists a positive constant $C=C(\mathfrak{a}_1, \mathfrak{a}_2,\nu)$, such that
\begin{description}
  \item[(i)] For any $k,l$ and $\frac{1}{10}t^{\varepsilon_1} \leq y\leq 10t^{\varepsilon_1}$,
\begin{equation}\label{E2.74}
\left|y^{-l}\partial_y^k\partial_t^i\left(W_{ss}^{(N)}-W_{\text{in}}^{(N)}\right)\right| \leq C_{k,l,i}t^{\nu\left(N+1-\frac{l+k}{2}\right)-i},\quad i=0,1.
\end{equation}
  \item[(ii)] The profile $Z_{ss}^{(N)}$ satisfies
\begin{align}
&\left\|\partial_\rho Z_{ss}^{(N)}(t)\right\|_{L^2\left(\rho d\rho, \frac{1}{10}t^{-\nu+\varepsilon_1}\leq\rho\leq10t^{-\nu+\varepsilon_2}\right)} \leq Ct^\eta, \label{E2.75}\\
&\left\|\rho^{-1}Z_{ss}^{(N)}(t)\right\|_{L^2\left(\rho d\rho, \frac{1}{10}t^{-\nu+\varepsilon_1}\leq\rho\leq10t^{-\nu+\varepsilon_2}\right)} \leq Ct^\eta, \label{E2.76}\\
&\left\|Z_{ss}^{(N)}(t)\right\|_{L^\infty\left( \frac{1}{10}t^{-\nu+\varepsilon_1}\leq\rho\leq10t^{-\nu+\varepsilon_2}\right)} \leq Ct^\eta, \label{E2.77}\\
&\left\|\rho \partial_\rho Z_{ss}^{(N)}(t)\right\|_{L^\infty\left( \frac{1}{10}t^{-\nu+\varepsilon_1}\leq\rho\leq10t^{-\nu+\varepsilon_2}\right)} \leq Ct^\eta, \label{E2.78}\\
&\left\|\rho^{-l}\partial_\rho^kZ_{ss}^{(N)}(t)\right\|_{L^2\left(\rho d\rho, \frac{1}{10}t^{-\nu+\varepsilon_1}\leq\rho\leq10t^{-\nu+\varepsilon_2}\right)} \leq Ct^{\nu+\frac{1}{2}+\eta},\quad k+l=2, \label{E2.79}\\
&\left\|\rho^{-l}\partial_\rho^kZ_{\text{in}}^{(N)}(t)\right\|_{L^2\left(\rho d\rho, \frac{1}{10}t^{-\nu+\varepsilon_1}\leq\rho\leq10t^{-\nu+\varepsilon_2}\right)} \leq Ct^{2\nu},\quad k+l\geq3, \label{E2.80}\\
&\left\|\rho^{-l}\partial_\rho^kZ_{ss}^{(N)}(t)\right\|_{L^\infty\left( \frac{1}{10}t^{-\nu+\varepsilon_1}\leq\rho\leq10t^{-\nu+\varepsilon_2}\right)} \leq Ct^{\nu+\eta},\quad k+l=1, \label{E2.81}\\
&\left\|\rho^{-l}\partial_\rho^kZ_{ss}^{(N)}(t)\right\|_{L^\infty\left( \frac{1}{10}t^{-\nu+\varepsilon_1}\leq\rho\leq10t^{-\nu+\varepsilon_2}\right)} \leq Ct^{2\nu},\quad 2\leq k+l. \label{E2.82}
\end{align}
Here (and below) $\eta$ denotes constants that depend on $\nu$ and $\varepsilon_2$, which may vary from line to line.
  \item[(iii)] The error $A_{ss}^{(N)}$ satisfies the estimate:
\begin{align}\label{E2.83}
&\left\|y^{-l}\partial_y^k\partial_t^iA_{ss}^{(N)}(t)\right\|_{L^2 \left(ydy,\frac{1}{10}t^{\varepsilon_1}\leq y\leq10 t^{-\varepsilon_2}\right)} \leq Ct^{\nu N(1-2\varepsilon_2)-i},\\
&\quad\quad\quad\quad \quad\quad\quad\quad \quad\quad\quad\quad \quad\quad\quad\quad \quad\quad\quad\quad 0\leq l+k\leq4,\, i=0,1.\notag
\end{align}
\end{description}
\end{lemma}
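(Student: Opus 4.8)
The plan is to establish the three assertions separately, in each case exploiting the explicit structure of the profiles $W_{j,l}^{ss}$ provided by Lemmas~\ref{lem2.4} and~\ref{lem2.6} together with the inner-region bounds of Lemma~\ref{lem2.3}. Throughout one uses that on the self-similar annulus the two radial variables are linked by $y=t^{\nu}\rho$, so that $\rho\geq\tfrac{1}{10}t^{-\nu+\varepsilon_1}=\tfrac{1}{10}t^{-\nu/2}$ forces both $\rho^{-2}\lesssim t^{\nu}$ and $W_{ss}^{(N)}$ to be uniformly small there; this is what permits passing freely between the stereographic scalar $W$ and the vector $V$ with comparable control of all derivatives.

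\emph{Part (i): matching.} By construction $W_{j,l}^{ss}$ is the solution of \eqref{E2.44}--\eqref{E2.45} whose leading small-$y$ coefficients are $a_j=\alpha(j,1,1)$ and $b_j=\alpha(j,1,0)$, and by the uniqueness in Lemma~\ref{lem2.4} its full small-$y$ expansion \eqref{E2.54} reproduces, monomial by monomial, the formal series \eqref{E2.42}. On the other hand $W_{\text{in}}^{(N)}$ is the stereographic projection of $V_{\text{in}}^{(N)}(t^{-\nu}y,t)$, whose large-$\rho$ (equivalently small-$y$) behaviour is, by \eqref{E2.26}, that same series truncated at $k\le N$ with a remainder governed by \eqref{E2.27}--\eqref{E2.29}. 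I would therefore write $W_{ss}^{(N)}-W_{\text{in}}^{(N)}$ as a difference of two partial sums of one and the same triple expansion in $t^{\nu}$, $\ln y-\nu\ln t$ and $y$: every monomial retained by the construction cancels, and after $k$ derivatives in $y$, $l$ inverse powers of $y$ and $i$ time derivatives the first surviving contribution is of size $t^{\nu(N+1-(l+k)/2)-i}$ on $\{y\sim t^{\varepsilon_1}\}$, since by Lemmas~\ref{lem2.2} and~\ref{lem2.4} these expansions may be differentiated termwise. This is \eqref{E2.74}.

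\emph{Part (ii): profile estimates.} Here I would combine the convergent small-$y$ expansion \eqref{E2.54}, valid near the inner edge of the annulus, with the large-$y$ expansions \eqref{E2.57} and \eqref{E2.59}--\eqref{E2.60}, valid near the outer edge, for each $W_{j,l}^{ss}$, and estimate the corresponding monomials $t^{\nu(2j+1)}(\ln\rho)^{l}y^{2i\pm1}$ directly on the annulus; the dominant piece is $t^{\nu}W_{0,1}^{ss}$, of size $t^{\eta}$, so $W_{ss}^{(N)}$ is small and the inverse stereographic formulas give $V_{ss}^{(N)}=\mathbf{k}+O(|W_{ss}^{(N)}|)$ with comparable derivative bounds, while $Q(\rho)=\mathbf{k}+O(\rho^{-2})$ with $\rho^{-2}\lesssim t^{\nu}$. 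Adding the two and integrating the resulting pointwise bounds over the annulus in the measure $\rho\,d\rho$ (logarithmic losses absorbed into $\eta$) yields \eqref{E2.75}--\eqref{E2.82}, the higher-order norms being better because each differentiation of $Z_{ss}^{(N)}$ gains a power of $\rho^{-1}$; the exponential factors $e^{-imy^{2}/4(\mathfrak{a}_1-i\mathfrak{a}_2)}$ entering \eqref{E2.59} have to be tracked with some care, but they occur multiplied by coefficient functions carrying enough polynomial decay, exactly as in \cite{Perelman14}.

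\emph{Part (iii): error estimate.} Since each $W_{j,l}^{ss}$ solves \eqref{E2.44}--\eqref{E2.45} exactly, substituting the finite sum $W_{ss}^{(N)}$ into \eqref{E2.41} leaves precisely the contributions that the orders $j\ge N+1$ would have cancelled, together with the tail of the nonlinearity $G(W_{ss}^{(N)},\bar W_{ss}^{(N)},\partial_y W_{ss}^{(N)})$, which is of order $t^{\nu(2N+3)}$ or higher; hence $A_{ss}^{(N)}$ is a finite sum of monomials $t^{\nu(2j+1)}(\ln\rho)^{l}y^{2i\pm1}(\cdots)$ with $j\ge N+1$. Differentiating termwise, maximising over the annulus --- the worst case being the outer edge $y\sim t^{-\varepsilon_2}$, where the positive $y$-powers are largest --- and integrating $y\,dy$ gives the claimed bound $t^{\nu N(1-2\varepsilon_2)-i}$; the time derivative costs the usual extra $t^{-1}$.

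\emph{Main obstacle.} The one genuinely delicate point is Part~(i): one must verify that the inner profile re-expanded for small $y$ and the self-similar profile expanded for small $y$ agree to every order kept in the truncation, so that their difference is really of size $t^{\nu(N+1)}$ and not merely $t^{\nu}$. This is exactly where the choice $a_j=\alpha(j,1,1)$, $b_j=\alpha(j,1,0)$ and the uniqueness statements of Lemmas~\ref{lem2.2} and~\ref{lem2.4} are used, and it requires a careful reorganisation of the triple expansion in $t^{\nu}$, $\ln y$ and $y$; the observation recorded in Lemma~\ref{lem2.6}, that the powers of $y$ appearing in these expansions carry no $\mathfrak{a}_1,\mathfrak{a}_2$ dependence, is what keeps this bookkeeping --- and the later matching with the remote region --- internally consistent in the parabolic case $\mathfrak{a}_2>0$.
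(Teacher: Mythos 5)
Your overall architecture (part (i) by cancellation of the common triple expansion via the uniqueness statements and the choice $a_j=\alpha(j,1,1)$, $b_j=\alpha(j,1,0)$; parts (ii)--(iii) by termwise estimation of the explicit expansions) is the argument the paper intends — it gives no proof beyond ``according to the previous analysis'' — but your part (ii) has a genuine gap. You estimate $Z_{ss}^{(N)}=V_{ss}^{(N)}-Q$ by bounding $V_{ss}^{(N)}-\mathbf{k}$ and $Q-\mathbf{k}$ separately and ``adding the two'', quoting $Q=\mathbf{k}+O(\rho^{-2})$. This is wrong twice over. First, $Q-\mathbf{k}$ decays only like $\rho^{-1}$, since $h_1(\rho)=2\rho/(\rho^2+1)\sim 2/\rho$. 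Second, and decisively, with the correct decay the $Q$-tail by itself violates the sharper bounds: at the inner edge $\rho\sim\tfrac{1}{10}t^{-\nu/2}$ (recall $\varepsilon_1=\nu/2$) one has $\rho^{-2}h_1\sim\rho^{-3}\sim t^{3\nu/2}$, which for $\nu>1$ is far larger than the $t^{2\nu}$ required in \eqref{E2.82}; similarly $\partial_\rho h_1\sim\rho^{-2}\sim t^{\nu}$ misses \eqref{E2.81}, and the corresponding $L^2(\rho\,d\rho)$ contributions, of order $t^{\nu}$ and $t^{3\nu/2}$, fail \eqref{E2.79} and \eqref{E2.80}. So no triangle-inequality treatment of $Q$ as a separate error can yield \eqref{E2.79}--\eqref{E2.82}.

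What saves these estimates is a cancellation you must make explicit. On the whole part-(ii) annulus one has $y\le 10t^{\varepsilon_2}\to0$, so only the small-$y$ expansions matter there, and the monomials of \eqref{E2.54} with $i+j=0$, $l=0$, namely $\alpha(j,-j,0)\,t^{\nu(2j+1)}y^{-(2j+1)}=\alpha(j,-j,0)\,\rho^{-(2j+1)}$ for $j\le N$, are exactly the first $N+1$ terms of the large-$\rho$ expansion of the stereographic projection of $Q$: they arise from the inner order $k=0$ (i.e.\ from $Q$ itself) in \eqref{E2.26}. Hence in $V_{ss}^{(N)}-Q$ the slowly decaying $Q$-tail cancels to order $\rho^{-(2N+3)}$, and the remainder is governed by the inner orders $k\ge1$, i.e.\ terms of size $t^{2\nu}$ times controlled powers of $\rho$ and logarithms — near the inner edge this is comparable to $Z_{\text{in}}^{(N)}$, whose bounds are Lemma~\ref{lem2.3}, and this is what produces \eqref{E2.75}--\eqref{E2.82}. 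Reorganize part (ii) around this cancellation rather than around $|V_{ss}^{(N)}-\mathbf{k}|+|Q-\mathbf{k}|$. A smaller point: the claim that each differentiation ``gains a power of $\rho^{-1}$'' is false for the factors $e^{-imy^2/4(\mathfrak{a}_1-i\mathfrak{a}_2)}$, whose differentiation produces a factor of order $y$; on the part-(iii) region $y\le10t^{-\varepsilon_2}$ this costs $t^{-\varepsilon_2}$ per derivative, which the generous bound \eqref{E2.83} absorbs, but it should be accounted for rather than asserted away.
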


\subsection{Remote region $r\sim1$}\label{subsec2.4}
Next, we consider the remote region $t^{-\varepsilon_2}\leq rt^{-\frac{1}{2}}$. We use the formal solution constructed in Subsection \ref{subsec2.3}:
$$
\sum_{j\geq0}\sum_{l=0}^{2j+1}t^{\nu(2j+1)}(\ln y-\nu\ln t)^lW_{j,l}^{ss}(y).
$$
According to Lemma \ref{lem2.6}, this solution has the form \eqref{E2.55}, \eqref{E2.56}, \eqref{E2.57}, \eqref{E2.59} and \eqref{E2.60}, where $\hat{w}_k^{j,l,i}$ and $w_{k,i,s}^{j,l,m}$ are some coefficients to be determined. Note that by taking the limits $y\rightarrow\infty$ and $r\rightarrow0$, the main order terms of the expansion
$$
\sum_{j\geq0}\sum_{l=0}^{2j+1} t^{i\alpha_0+\nu(2j+1)}(\ln y-\nu\ln t)^lW_{j,l}^{ss}(t^{-\frac{1}{2}}r)
$$
is
\begin{align}\label{E2.84}
&\sum_{j\geq0}\sum_{l=0}^{2j+1} t^{i\alpha_0+\nu(2j+1)}(\ln y-\nu\ln t)^lW_{j,l}^{ss}(t^{-\frac{1}{2}}r)\notag\\
\sim&\sum_{k\geq0}\frac{t^k}{r^{2k}}\sum_{j\geq0}\sum_{l=0}^{2j+1} \hat{w}_k^{j,l,0}(\ln r)^lr^{2i\alpha_0+2\nu(2j+1)}\\
&+\frac{1}{t}e^{\frac{ir^2}{4(\mathfrak{a}_1-i\mathfrak{a}_2)t}} \sum_{k\geq0}\frac{t^k}{r^{2k}}\sum_{j\geq0}\sum_{l=0}^{2j+1} \hat{w}_k^{j,l,-1}\left(\frac{r}{t}\right)^{-2i\alpha_0-2\nu(2j+1)-2} \left(\ln\left(\frac{r}{t}\right)\right)^l.\notag
\end{align}
This inspires us to construct the solutions of \eqref{E2.38} in the region $t^{-\varepsilon_2}\leq rt^{-\frac{1}{2}}$ by perturbing the time-independent profile:
$$
\sum_{j\geq0}\sum_{l=0}^{2j+1}\beta_0(j,l)(\ln r)^l r^{2i\alpha_0+2\nu(2j+1)},
$$
where $\beta_0(j,l)=\hat{w}_0^{j,l,0}$.

Let $\theta\in C_0^\infty(\mathbb{R})$ be a cut-off function that satisfies
\begin{equation*}
\theta(\xi)=
\begin{cases}
1,\quad \text{ if }\, |\xi|\leq1,\\
0,\quad \text{ if }\, |\xi|\geq2.
\end{cases}
\end{equation*}
For $N\geq2$ and $\delta>0$, we define
$$
f_0(r)\triangleq f_0^{N}(r)=\theta\left(\frac{r}{\delta}\right) \sum_{j=0}^N\sum_{l=0}^{2j+1}\beta_0(j,l) (\ln r)^lr^{2i\alpha_0+2\nu(2j+1)}.
$$
Note that $e^{i\theta}f_0\in H^{1+2\nu-}$ and
\begin{equation}\label{E2.85}
\left\|e^{i\theta}f_0\right\|_{\dot{H}^s}\leq C\delta^{1+2\nu-s},\quad \forall0\leq s<1+2\nu.
\end{equation}

Let $w(r,t)=f_0(r)+\chi(r,t)$, then $\chi$ solves
\begin{equation}\label{E2.86}
\begin{split}
i\chi_t
=(\mathfrak{a}_1-i\mathfrak{a}_2)\left[-\Delta\chi +\frac{1}{r^2}\chi+\mathcal{V}_0\partial_r\chi +\mathcal{V}_1\chi+\mathcal{V}_2\bar{\chi}+\mathcal{N}+\mathcal{D}_0\right],
\end{split}
\end{equation}
where
\begin{align*}
\mathcal{V}_0=&\frac{4\bar{f}_0\partial_rf_0}{1+|f_0|^2},\\
\mathcal{V}_1=&-\frac{2|f_0|^2 \left(2+|f_0|^2\right)}{r^2\left(1+|f_0|^2\right)^2} -\frac{2\bar{f}_0^2(\partial_rf_0)^2}{\left(1+|f_0|^2\right)^2},\\
\mathcal{V}_2=&\frac{2\left(r^2(\partial_rf_0)^2-f_0^2\right)}{r^2 \left(1+|f_0|^2\right)^2},\\
\mathcal{D}_0=&\left(-\Delta+\frac{1}{r^2}\right)f_0 +G\left(f_0,\bar{f}_0,\partial_rf_0\right).
\end{align*}
Here $\mathcal{N}$ contains the terms at least quadratic in $\chi$, which has the following form:
\begin{equation}\label{E2.87}
\begin{split}
\mathcal{N}=&N_0\left(\chi,\bar{\chi}\right)+\chi_r N_1\left(\chi,\bar{\chi}\right) +\chi_r^2N_2\left(\chi,\bar{\chi}\right),\\
N_0(\chi,\bar{\chi})=&G\left(f_0+\chi,\bar{f}_0 +\bar{\chi},\partial_rf_0\right)- G\left(f_0,\bar{f}_0,\partial_rf_0\right) -\mathcal{V}_1\chi-\mathcal{V}_2\bar{\chi},\\
N_1(\chi,\bar{\chi})=&\frac{4\partial_rf_0 \left(\bar{f}_0+\bar{\chi}\right)}{1+|f_0+\chi|^2}-\mathcal{V}_0,\\
N_2(\chi,\bar{\chi})=&\frac{2(\bar{f}_0+\bar{\chi})}{1+|f_0+\chi|^2}.
\end{split}
\end{equation}
By \eqref{E2.55}, \eqref{E2.56}, \eqref{E2.57}, \eqref{E2.59} and \eqref{E2.60}, we construct $\chi$ of the form:
\begin{align}\label{E2.88}
\chi(r,t)=\sum_{q\geq0 \atop k\geq1}t^{2\nu q+k}\sum_{m\in\natural}\sum_{s=0}^qe^{-im\Phi}(\ln r-\ln t)^s g_{k,q,m,s}(r),
\end{align}
where
\begin{align*}
&\Phi(r,t)=2\alpha_0 \ln t+\frac{r^2}{4(\mathfrak{a}_1-i\mathfrak{a}_2)t}+\varphi(r),\\
&\natural=\left\{m:-\min\{k,q\}\leq m\leq\min\left\{(k-2)_+,q\right\},\,\, q-m\in2\mathbb{Z} \right\}.
\end{align*}
Here $\varphi(r)$ is to be determined.

By \eqref{E2.88}, we get
\begin{align}
\chi_t=&\sum_{q\geq0 \atop k\geq2}t^{2\nu q+k-2} \sum_{m\in\natural}\sum_{s=0}^qe^{-im\Phi}(\ln r-\ln t)^s\notag\\
&\cdot\bigg[(2\nu q+k-1-2 im\alpha_0) g_{k-1,q,m,s}+\frac{imr^2}{4(\mathfrak{a}_1-i\mathfrak{a}_2)}g_{k,q,m,s}\notag\\
&\quad\quad \quad\quad-(s+1)g_{k-1,q,m,s+1}\bigg],\notag\\
\chi_r=&\sum_{q\geq0 \atop k\geq2}t^{2\nu q+k-2} \sum_{m\in\natural}\sum_{s=0}^qe^{-im\Phi}(\ln r-\ln t)^s\bigg[\frac{-imr}{2(\mathfrak{a}_1-i\mathfrak{a}_2)}g_{k-1,q,m,s}\label{chirA}\\
&+(\partial_r-im\varphi_r)g_{k-2,q,m,s} +\frac{s+1}{r}g_{k-2,q,m,s+1}\bigg],\notag\\
\chi_{rr}=&\sum_{q\geq0 \atop k\geq2}t^{2\nu q+k-2} \sum_{m\in\natural}\sum_{s=0}^qe^{-im\Phi}(\ln r-\ln t)^s\bigg\{(-im\varphi_{rr}-m^2\varphi_r^2\notag\\
& -2im\varphi_r\partial_r+\partial_{rr}) g_{k-2,q,m,s} -\frac{m^2r^2}{4(\mathfrak{a}_1-i\mathfrak{a}_2)^2}g_{k,q,m,s}\notag\\
& -\frac{2m^2r\varphi_r +imr\partial_r+imr}{2(\mathfrak{a}_1-i\mathfrak{a}_2)} g_{k-1,q,m,s}\notag\\
& \left(-\frac{im(s+1)}{r}\varphi_r -\frac{s+1}{r^2}+\frac{2(s+1)}{r}\partial_r\right)g_{k-2,q,m,s+1}\notag\\
&+\frac{(s+1)(s+2)}{r^2}g_{k-2,q,m,s+2} -\frac{im(s+1)}{2(\mathfrak{a}_1-i\mathfrak{a}_2)}g_{k-1,q,m,s+1}\bigg\}.\notag
\end{align}
Thus, substituting the hypothesis \eqref{E2.88} into $(\mathfrak{a}_1-i\mathfrak{a}_2)N_i$, $i=0,1,2$, and $-i\chi_t+(\mathfrak{a}_1-i\mathfrak{a}_2) \left[-\Delta\chi +\frac{1}{r^2}\chi+\mathcal{V}_0\partial_r\chi +\mathcal{V}_1\chi+\mathcal{V}_2\bar{\chi}\right]$, we get
\begin{align}
& -i\chi_t+(\mathfrak{a}_1-i\mathfrak{a}_2)\left[ -\Delta\chi +\frac{1}{r^2}\chi+\mathcal{V}_0\partial_r\chi +\mathcal{V}_1\chi+\mathcal{V}_2\bar{\chi}\right]\notag\\
&\quad\quad =\sum_{q\geq0 \atop k\geq2}t^{2\nu q+k-2} \sum_{m\in\natural}\sum_{s=0}^qe^{-im\Phi}(\ln r-\ln t)^s \Psi_{k,q,m,s}^{lin},\notag\\
&(\mathfrak{a}_1-i\mathfrak{a}_2)N_0(\chi,\bar{\chi})=\sum_{q\geq0 \atop k\geq4}t^{2\nu q+k-2}\sum_{m\in\natural} \sum_{s=0}^qe^{-im\Phi}(\ln r-\ln t)^s \Psi_{k,q,m,s}^{nl,0},\\
&(\mathfrak{a}_1-i\mathfrak{a}_2)\chi_r N_1(\chi,\bar{\chi})=\sum_{q\geq0 \atop k\geq3}t^{2\nu q+k-2}\sum_{m\in\natural} \sum_{s=0}^qe^{-im\Phi}(\ln r-\ln t)^s \Psi_{k,q,m,s}^{nl,1},\notag\\
&(\mathfrak{a}_1-i\mathfrak{a}_2)(\chi_r)^2N_2(\chi,\bar{\chi})=\sum_{q\geq0 \atop k\geq2}t^{2\nu q+k-2}\sum_{m\in\natural} \sum_{s=0}^qe^{-im\Phi}(\ln r-\ln t)^s \Psi_{k,q,m,s}^{nl,2},\notag
\end{align}
where
\begin{equation}\label{E2.89}
\Psi_{k,q,m,s}^{lin} =\frac{m(1+m)}{4(\mathfrak{a}_1-i\mathfrak{a}_2)}r^2 g_{k,q,m,s}+\Psi_{k,q,m,s}^{lin,1} +\Psi_{k,q,m,s}^{lin,2},
\end{equation}
where $\Psi_{k,q,m,s}^{lin,1}$ and $\Psi_{k,q,m,s}^{lin,2}$ depend only on $g_{k-1,q,m,s'}$, $s'=s,s+1$ and $g_{k-2,q,m,s'}$, $s'=s,s+1,s+2$, respectively. In fact, they can be written as
\begin{align}
\Psi_{k,q,m,s}^{lin,1}=&\bigg[-i(2\nu q+k-1-2im\alpha_0) +m^2r\varphi_r+\frac{im}{2} +\frac{imr}{2}\partial_r\label{E2.90}\\
&-\left(\mathcal{V}_0-\frac{1}{r}\right)\frac{imr}{2}\bigg]g_{k-1,q,m,s} +i(m+1)(s+1) g_{k-1,q,m,s+1},\notag\\
\Psi_{k,q,m,s}^{lin,2}=&\bigg[-(\mathfrak{a}_1-i\mathfrak{a}_2)(-im\varphi_{rr}
-m^2\varphi_r^2 -2im\varphi_r\partial_r +\partial_{rr})\label{E2.91}\\
&+(\mathfrak{a}_1-i\mathfrak{a}_2)\left(\mathcal{V}_0 -\frac{1}{r}\right)(-im\varphi_r+\partial_r) +\mathcal{V}_1 +\frac{1}{r^2}\bigg] g_{k-2,q,m,s}\notag\\
&-(\mathfrak{a}_1-i\mathfrak{a}_2) \bigg[-\frac{2im (s+1)}{r}\varphi_r -\frac{s+1}{r^2}+\frac{2(s+1)}{r}\partial_r\notag\\
&\quad\quad\quad \quad\quad\quad\quad\quad-\left(\mathcal{V}_0-\frac{1}{r}\right) \frac{s+1}{r}\bigg]g_{k-2,q,m,s+1}\notag\\
&-(\mathfrak{a}_1-i\mathfrak{a}_2)\frac{(s+1)(s+2)}{r^2}g_{k-2,q,m,s+2}\notag\\ &+(\mathfrak{a}_1-i\mathfrak{a}_2)\mathcal{V}_2\bar{g}_{k-2,q,-m,s}.\notag
\end{align}
Here and below we assume that $g_{k,q,m,s}=0$ if $(k,q,m,s)\notin\Omega$, where
$$
\Omega=\left\{(k,q,m,s)|k\geq1,\, q\geq0,\, 0\leq s\leq q,\, q-m\in2\mathbb{Z},\, -\min\{k,q\}\leq m\leq \min\{k-1,q\}\right\}.
$$
Note that $\Psi_{k,q,m,s}^{nl,i}$, $i=0,1$, depend only on $g_{k',q',m',s'}$, where $k'\leq k-2$, i.e.,
\begin{align*}
&\Psi_{k,q,m,s}^{nl,0}=\Psi_{k,q,m,s}^{nl,0}\left(r;g_{k',q',m',s'},\, k'\leq k-3\right),\\
&\Psi_{k,q,m,s}^{nl,1}=\Psi_{k,q,m,s}^{nl,1}\left(r;g_{k',q',m',s'},\, k'\leq k-2\right).
\end{align*}
By \eqref{chirA}, $\Psi_{k,q,m,s}^{nl,2}$ has the follow structure:
\begin{equation}\label{E2.92}
\begin{split}
&\Psi_{2,q,m,s}^{nl,2}=-\delta_{m,-2}\frac{r^2\bar{f}_0}{2(\mathfrak{a}_1-i\mathfrak{a}_2) \left(1+\left|f_0\right|^2\right)} \sum_{q_1+q_2=q \atop s_1+s_2=s}g_{1,q_1,-1,s_1}g_{1,q_2,-1,s_2},\\
&\Psi_{k,q,m,s}^{nl,2}=\Psi_{k,q,m,s}^{nl,2,0}+\tilde{\Psi}_{k,q,m,s}^{nl,2},\quad k\geq3,\\
&\Psi_{k,q,m,s}^{nl,2,0}=\frac{(m+1)r^2\bar{f}_0}{(\mathfrak{a}_1-i\mathfrak{a}_2) \left(1+\left|f_0\right|^2\right)}\sum_{q_1+q_2=q \atop s_1+s_2=s} g_{1,q_1,-1,s_1}g_{1,q_2,m+1,s_2},
\end{split}
\end{equation}
where $\tilde{\Psi}^{nl,2}_{k,q,m,s}$ depends only on $g_{k',q',m',s'}$ $(k'\leq k-2)$, i.e.,
$$
\tilde{\Psi}^{nl,2}_{k,q,m,s}=\tilde{\Psi}^{nl,2}_{k,q,m,s}\left(r;\, g_{k',q',m',s'},\, k'\leq k-2\right).
$$

Note that
$$
\Psi^{nl,2,0}_{k,q,-1,s}=0,\quad \forall k,q,s.
$$
Thus, \eqref{E2.86} is equivalent to
\begin{equation}\label{E2.93}
\begin{cases}
\Psi_{2,0,0,0}^{lin}+(\mathfrak{a}_1-i\mathfrak{a}_2)\mathcal{D}_0=0,\\
\Psi_{k,q,m,s}^{lin} +\Psi_{k,q,m,s}^{nl}=0,\quad (k,q,m,s)\in\Omega,\,\, (k,q,m,s)\neq(2,0,0,0),
\end{cases}
\end{equation}
where $\Psi_{k,q,m,s}^{nl}=\Psi_{k,q,m,s}^{nl,0}+ \Psi_{k,q,m,s}^{nl,1} +\Psi_{k,q,m,s}^{nl,2}$.

Now we rewrite \eqref{E2.93} as the following system for $k\geq1$:
\begin{equation}\label{E2.94}
\begin{cases}
\Psi_{2,0,0,0}^{lin}+(\mathfrak{a}_1-i\mathfrak{a}_2)\mathcal{D}_0=0,\\
\Psi_{2,2j,0,s}^{lin} =0,\quad (j,s)\neq(0,0),\\
\Psi_{2,2j+1,-1,s}^{lin} =0,
\end{cases}
\end{equation}
and
\begin{equation}\label{E2.95}
\begin{cases}
\Psi_{k+1,q,m,s}^{lin} +\Psi_{k+1,q,m,s}^{nl}=0,\quad m=0,-1,\,\, k\geq2,\\
\Psi_{k,q,m,s}^{lin} +\Psi_{k,q,m,s}^{nl}=0,\quad m\neq0,-1,\,\, k\geq2.
\end{cases}
\end{equation}

For \eqref{E2.94}, select $\varphi$ as
\begin{equation}\label{E2.96}
\varphi(r) =-i\int_0^r\frac{\bar{f}_0(s)\partial_sf_0(s)-f_0(s) \partial_s\bar{f}_0(s)}{1+\left|f_0(s)\right|^2} ds.
\end{equation}
By \eqref{E2.90}, we rewrite \eqref{E2.94} of the form:
\begin{equation}\label{E2.97}
\begin{cases}
g_{1,0,0,0}=-i(\mathfrak{a}_1-i\mathfrak{a}_2)\mathcal{D}_0,\\
(4\nu j+1)g_{1,2j,0,s} -(s+1)g_{1,2j,0,s+1}=0,\quad (j,s)\neq(0,0),\\
\left[2\nu(2j+1)+2i\alpha_0+2 -r\frac{d}{dr}\ln\left(1+\left|f_0\right|^2\right)\right] g_{1,2j+1,-1,s}\\
\quad\quad \quad\quad \quad\quad \quad\quad \quad\quad \quad\quad \quad\quad \quad\quad \quad\quad
+r\partial_r g_{1,2j+1,-1,s}=0.
\end{cases}
\end{equation}
By \eqref{E2.84}, we get a solution of \eqref{E2.97}:
\begin{equation}\label{E2.98}
\begin{cases}
&g_{1,0,0,0}=-i(\mathfrak{a}_1-i\mathfrak{a}_2)\mathcal{D}_0,\\
&g_{1,2j,0,s}=0,\quad(j,s)\neq(0,0),\\
&g_{1,2j+1,-1,s}=\beta_1(j,s)\left(1+\left|f_0\right|^2\right) r^{-2i\alpha_0-2\nu(2j+1)-2},\\
&\quad\quad \quad\quad \quad\quad \quad\quad \quad\quad \quad\quad \quad\quad \quad\quad 0\leq s\leq 2j+1,\, 0\leq j\leq N,\\
&g_{1,2j+1,-1,s}=0,\quad j>N,
\end{cases}
\end{equation}
where $\beta_1(j,l)=\hat{w}_0^{j,l,-1}$.

In order to solve \eqref{E2.95}, we first introduce some new notation: For $m\in\mathbb{Z}$, let $\mathcal{A}_m$ be the space consisting of all continuous functions $a:\mathbb{R}_+\rightarrow\mathbb{C}$ satisfying
\begin{description}
  \item[(i)] $a\in C^\infty(\mathbb{R}_+^*)$ and $\operatorname{supp}(a)\subset\{r\leq2\delta\}$;
  \item[(ii)] For $0\leq r<\delta$, $a$ has the following absolutely convergent expansion:
  $$
  a(r)=\sum_{n\geq K(m)\atop n-m-1\in 2\mathbb{Z}}\sum_{l=0}^{n}\alpha_n(\ln r)^l r^{2\nu n},
  $$
  where $K(m)$ is defined by
  \begin{equation*}
  K(m)=
  \begin{cases}
  m+1,\quad m\geq0,\\
  |m|-1, \quad m\leq-1.
  \end{cases}
  \end{equation*}
\end{description}
In addition, for $k\geq1$, let $\mathcal{B}_k$ be the space consisting of all continuous functions $b:\mathbb{R}_+\rightarrow \mathbb{C}$ satisfying
\begin{description}
  \item[(i)] $b\in C^\infty(\mathbb{R}_+^*)$;
  \item[(ii)] For $0\leq r<\delta$, $b$ has the following absolutely convergent expansion:
  $$
  b(r)=\sum_{n=0}^\infty\sum_{l=0}^{2n}\beta_{n,l}(\ln r)^l r^{4\nu n};
  $$
  \item[(iii)] For $r\geq2\delta$, $b$ is a polynomial with degree $k-1$.
\end{description}
Finally, assume $\mathcal{B}_k^0=\left\{b\in\mathcal{B}_k|b(0)=0\right\}$.

Thus, for any $m$ and $k$, we have $r\partial_r\mathcal{A}_m\subset\mathcal{A}_m$, $r\partial_r\mathcal{B}_k\subset\mathcal{B}_k$ and $\mathcal{B}_k\mathcal{A}_m\subset\mathcal{A}_m$. In addition, note that
\begin{align*}
&f_0\in \mathcal{A}_0,\quad \varphi\in\mathcal{B}_1^0,\quad g_{1,0,0,0}\in r^{2i\alpha_0-2}\mathcal{A}_0,\\
&g_{1,2j+1,-1,s}\in r^{-2i\alpha_0-2\nu(2j+1)-2}\mathcal{B}_1,\quad 0\leq s\leq 2j+1.
\end{align*}
Furthermore, for any $(k,q,m,s)\in\Omega$, if $g_{k,q,m,s}\in r^{2(1+2m)i\alpha_0-2\nu q-2k}\mathcal{A}_m$ ($m\neq-1$) and $g_{k,q,-1,s}\in r^{-2i\alpha_0-2\nu q-2k}\mathcal{B}_k$, then
\begin{equation}\label{E2.99}
\begin{split}
&\Psi_{k,q,m,s}^{lin,i},\, \Psi_{k,q,m,s}^{nl,i},\, \tilde{\Psi}_{k,q,m,s}^{nl,2}\in r^{2(1+2m)i\alpha_0-2\nu q-2(k-1)}\mathcal{A}_m,\quad m\neq-1,\\
&\Psi_{k,q,-1,s}^{lin,2},\, \Psi_{k,q,-1,s}^{nl,i},\, \tilde{\Psi}_{k,q,-1,s}^{nl,2}\in r^{2i\alpha_0-2\nu q-2(k-1)}\mathcal{B}_{k-2},
\end{split}
\end{equation}
where $i=1,2$, $j=0,1,2$.

For \eqref{E2.95}, according to \eqref{E2.89}, \eqref{E2.90}, \eqref{E2.90}, \eqref{E2.91}, \eqref{E2.92} and \eqref{E2.96}, we can rewrite it as
\begin{equation}\label{E2.100}
\begin{cases}
&(2\nu q+k)g_{k,q,0,s}-(s+1)g_{k,q,0,s+1}=C_{k,q,0,s}+D_{k,q,s},\\
&r\partial_r g_{k,q,-1,s}+ \left(2\nu q+k+1- \frac{r\left(\bar{f}_0\partial_rf_0+ f_0\partial_r\bar{f}_0\right)}{1+\left|f_0\right|^2}\right) g_{k,q,-1,s}=C_{k,q,-1,s},\\
&\frac{m(1+m)}{4(\mathfrak{a}_1-i\mathfrak{a}_2)}r^2g_{k,q,m,s}= B_{k,q,m,s},\quad m\neq0,-1,
\end{cases}
\end{equation}
where $B_{k,q,m,s}$ and $C_{k,q,m,s}$ depend only on $g_{k',q',m',s'}$, $k'\leq k-1$, i.e.,
\begin{align*}
&B_{k,q,m,s}=B_{k,q,m,s}\left(r;\, g_{k',q',m',s'},\, k'\leq k-1\right),\quad m\neq0,-1,\\
&C_{k,q,m,s}=C_{k,q,m,s}\left(r;\, g_{k',q',m',s'},\, k'\leq k-1\right),\quad m=0,-1,
\end{align*}
More precisely, they have the form:
\begin{equation}\label{E2.101}
\begin{split}
&B_{k,q,m,s}=-\Psi_{k,q,m,s}^{lin,1} -\Psi_{k,q,m,s}^{lin,2} -\Psi_{k,q,m,s}^{nl},\quad m\neq0,-1,\\
&C_{k,q,m,s}=-i\Psi_{k+1,q,m,s}^{lin,2}-i\tilde{\Psi}_{k+1,q,m,s}^{nl},\quad m=0,-1.
\end{split}
\end{equation}
Finally, $D_{k,q,s}$ depend only on $g_{k,q,l,s}$:
\begin{equation}\label{E2.102}
D_{k,q,s}=-i\Psi_{k+1,q,0,s}^{nl,2,0}
=\frac{-i r^2\bar{f}_0}{(\mathfrak{a}_1-i\mathfrak{a}_2) \left(1+\left|f_0\right|^2\right)}\sum_{q_1+q_2=q\atop s_1+s_2=s}g_{1,q_1,-1,s_1}g_{k,q_2,1,s_2},
\end{equation}
where $D_{2,q,s}=0$.
\begin{remark}\label{re2.8}
It can be verified that if
\begin{align*}
&g_{k,q,m,s}=0,\quad \forall q>(2N+1)(2k-2),\,\, m\neq0,1,\\
&g_{k,q,m,s}=0,\quad \forall q>(2N+1)(2k-1),\,\, m=0,1,
\end{align*}
then
\begin{align*}
&B_{k,q,m,s}=0,\quad \forall q>(2N+1)(2k-2),\,\, m\neq0,1,\\
&C_{k,q,m,s}=0,\quad \forall q>(2N+1)(2k-1),\,\, m=0,1,\\
&D_{k,q,s}=0,\quad \forall q>(2N+1)(2k-1).
\end{align*}
\end{remark}

Now we prove the following result.
\begin{lemma}\label{lem2.9}
The system \eqref{E2.100} exists a unique solution $(g_{k,q,m,s})_{(k,q,m,s)\in\Omega\atop k\geq2}$, which satisfies
\begin{equation}\label{E2.103}
\begin{split}
&g_{k,q,m,s}\in r^{2(2m+1)i\alpha_0-2\nu q-2k}\mathcal{A}_m,\quad m\neq-1,\\
&g_{k,q,-1,s}\in r^{-2i\alpha_0-2\nu q-2k}\mathcal{B}_k.
\end{split}
\end{equation}
Moreover,
\begin{equation}\label{E2.104}
\begin{split}
&g_{k,q,m,s}=0,\quad \forall q>(2N+1)(2k-2),\,\, m\neq0,-1,\\
&g_{k,q,m,s}=0,\quad \forall q>(2N+1)(2k-1),\,\, m=0,-1.
\end{split}
\end{equation}
\end{lemma}
\begin{proof}
For the case of $k=2$, by \eqref{E2.100}, \eqref{E2.101} and \eqref{E2.92}, we get
\begin{align}
&(4\nu j+2)g_{2,2j,0,s}-(s+1)g_{2,2j,0,s+1}=C_{2,2j,0,s},\quad 0\leq s\leq2j,\, 0\leq j,\label{E2.107}\\
&r\partial_r g_{2,2j+1,-1,s}+\left(2\nu(2j+1)+3- \frac{r\left(\bar{f}_0\partial_rf_0+f_0\partial_r\bar{f}_0\right)}{1+\left|f_0\right|^2}\right) g_{2,2j+1,-1,s}\label{E2.106}\\
&\quad =C_{2,2j+1,-1,s},\quad 0\leq s\leq2j+1,\, 0\leq j,\notag\\
&\frac{1}{2(\mathfrak{a}_1-i\mathfrak{a}_2)}r^2g_{2,2j,-2,s}=B_{2,2j,-2,s},\quad 0\leq s\leq2j,\, 1\leq j.\label{E2.105}
\end{align}
Note that $B_{2,q,m,s}$ and $C_{2,q,m,s}$ depend only on $g_{1,q',m',s'}$, so they can be regarded as known here. By \eqref{E2.99}, \eqref{E2.101} and Remark \ref{re2.8}, they satisfy
\begin{align*}
&B_{2,q,-2,s}\in r^{-6i\alpha_0-2\nu q-2}\mathcal{A}_{-2},\\
&C_{2,q,0,s}\in r^{2i\alpha_0-2\nu q-4}\mathcal{A}_0,\\
&C_{2,q,-1,s}\in r^{-2i\alpha_0-2\nu q-4}\mathcal{B}_1,\\
&B_{2,q,-2,s}=0,\quad q>2(2N+1),\\
&C_{2,q,m,s}=0,\quad q>3(2N+1),\,\, m=0,-1.
\end{align*}
Thus, by \eqref{E2.107}, \eqref{E2.106} and \eqref{E2.105}, we get
\begin{align}\label{E2.108}
&g_{2,2j,-2,s}=\frac{2(\mathfrak{a}_1-i\mathfrak{a}_2)}{r^2} B_{2,2j,-2,s}\in r^{-6i\alpha_0-2\nu q-4}\mathcal{A}_{-2},\notag\\
&\quad\quad\quad \quad\quad\quad \quad\quad\quad \quad\quad\quad \quad\quad\quad \quad\quad\quad 0\leq s\leq2j,\,\, 1\leq j,\notag\\
&g_{2,2j,0,2j}=\frac{1}{4j\nu+2} C_{2,2j,0,2j}\in r^{2i\alpha_0-4\nu j-4}\mathcal{A}_0,\quad 0\leq j,\notag\\
&g_{2,2j,0,s}=\frac{1}{4j\nu+2} C_{2,2j,0,s}+\frac{s+1}{4j\nu+2}g_{2,2j,0,s+1} \in r^{2i\alpha_0-4\nu j-4}\mathcal{A}_0,\\
&\quad\quad\quad \quad\quad\quad \quad\quad\quad \quad\quad\quad \quad\quad\quad \quad\quad\quad \quad\quad\quad \quad\quad\quad 0\leq s\leq 2j,\notag\\
&g_{2,2j,-2,s}=0,\quad j>2N+1,\notag\\
&g_{2,2j,0,s}=0,\quad j\geq 3N+2,\notag
\end{align}
For \eqref{E2.106}, we set
$$
g_{2,2j+1,-1,s}= r^{-2i\alpha_0-3-2\nu(2j+1)}\left(1+\left|f_0\right|^2\right) \hat{g}_{2,2j+1,-1,s},
$$
then $\hat{g}_{2,2j+1,-1,s}$ satisfies
\begin{equation}\label{E2.109}
\partial_r \hat{g}_{2,2j+1,-1,s}=r^{-2}\hat{C}_{2,2j+1,-1,s},
\end{equation}
where
$$
\hat{C}_{2,2j+1,-1,s}= r^{2i\alpha_0+4+2\nu(2j+1)}\frac{1}{1+\left|f_0\right|^2} C_{2,2j+1,-1,s}.
$$
Since $C_{2,2j+1,-1,s}\in r^{-2i\alpha_0+4+2\nu(2j+1)-4}\mathcal{B}_1$, we obtain that
\begin{description}
  \item[(i)] For $0\leq r<\delta$, $\hat{C}_{2,2j+1,-1,s}$ has an absolutely convergent expansion of the following form:
      $$
      \hat{C}_{2,2j+1,-1,s}=\sum_{n=0}^\infty\sum_{l=0}^{2n} \beta_{n,l}r^{4\nu n}(\ln r)^l,
      $$
  \item[(ii)] For $r\geq2\delta$, $\hat{C}_{2,2j+1,-1,s}$ is a constant.

  Thus, \eqref{E2.109} has a unique solution $\hat{g}_{2,2j+1,-1,s} \in \frac{1}{r}\mathcal{B}_2$, which can be written as
  \begin{align*}
  \hat{C}_{2,2j+1,-1,s}(r)=&\int_{0}^r \left[\frac{1}{\rho^2}\left(\hat{C}_{2,2j+1,-1,s}(\rho) -\beta_{0,0}\right) -\frac{1}{r}\beta_{0,0}\right]d\rho,\\
   &\quad\quad\quad \quad\quad\quad \quad\quad\quad \quad\quad\quad0\leq s\leq 2j+1,\,\,0\leq j.
  \end{align*}
  Finally, since $C_{2,q,-1,s}=0$ when $q>3(2N+1)$, we have
  $$
  g_{2,2j+1,-1,s}=0,\quad j>3N+1.
  $$
\end{description}

By the induction, suppose that for $k=2,\cdots,l-1$ $(l\geq3)$, \eqref{E2.100} exists a solution $\left(g_{k,q,m,s}\right)_{(k,q,m,s)\in\Omega\atop 2\leq k\leq l-1}$, which satisfies \eqref{E2.103} and \eqref{E2.104}. For the case of $k=l$, according to the last equation of \eqref{E2.100}, we get
$$
\frac{ m(m+1)}{4(\mathfrak{a}_1-i\mathfrak{a}_2)}r^2g_{l,q,m,s}= B_{l,q,m,s}, \quad m\neq0,-1,
$$
where $B_{l,q,m,s}$ are known, and by \eqref{E2.99}, \eqref{E2.101} and Remark \ref{re2.8}, they satisfy
\begin{align*}
&B_{l,q,m,s}\in r^{2(2m+1)i\alpha_0-2\nu q-2(l-1)}\mathcal{A}_m,\\
&B_{l,q,m,s}=0,\quad q>2(2N+1)(2l-2).
\end{align*}
Thus, we obtain that if $m\neq0,-1$, then
\begin{equation}\label{E2.110}
\begin{split}
&g_{l,q,m,s}=\frac{4(\mathfrak{a}_1-i\mathfrak{a}_2)}{m(m+1)r^2}B_{l,q,m,s}\in r^{2(2m+1)i\alpha_0-2\nu q-2l} \mathcal{A}_m,\\
&g_{l,q,m,s}=0,\quad q>2(2N+1)(2l-2).
\end{split}
\end{equation}
Next, we consider the equation of $g_{l,2j,0,s}$:
\begin{equation}\label{E2.111}
(4\nu j+l)g_{l,2j,0,s} -(s+1)g_{l,2j,0,s+1} =C_{l,2j,0,s}+D_{l,2j,s},\quad 0\leq s\leq 2j,\,\, 0\leq j.
\end{equation}
Note that the term on the right hand side $C_{l,2j,0,s}+D_{l,2j,s}$ depends only on $g_{l,q_1,1,s_1}$ and $g_{k,q_2,m_2,s_2}$, $k\leq l-1$. Moreover, by \eqref{E2.99}, \eqref{E2.101}, \eqref{E2.110} and Remark \ref{re2.8}, it satisfies
\begin{align*}
&C_{l,2j,0,s}+D_{l,2j,s}\in r^{2i\alpha_0-4\nu j-2l}\mathcal{A}_0,\\
&C_{l,2j,0,s}+D_{l,2j,s}=0,\quad j>(2N+1)(2l-1).
\end{align*}
Therefore, the solutions of \eqref{E2.111} satisfy
\begin{align*}
&g_{l,2j,0,s}\in r^{2i\alpha_0-4\nu j-2l}\mathcal{A}_0, \quad 0\leq s\leq2j,\,\, 0\leq j,\\
&g_{l,2j,0,s}=0,\quad j>(2N+1)(2l-1).
\end{align*}
Finally, for $g_{l,2j+1,-1,s}$, $0\leq s\leq2j+1$, $0\leq j$, by \eqref{E2.100}, we have
\begin{equation}\label{E2.112}
\begin{split}
&\left(2\nu(2j+1) +l+1- \frac{r\left(\bar{f}_0\partial_rf_0+f_0\partial{f}_0\right)}{1+\left|f_0\right|^2}\right) g_{l,2j+1,-1,s}+r\partial_r g_{l,2j+1,m,s}\\
&\quad\quad\quad \quad\quad\quad \quad\quad\quad \quad\quad\quad\quad\quad\quad\quad\quad\quad \quad\quad\quad \quad\quad\quad \quad =C_{l,2j+1,-1,s},
\end{split}
\end{equation}
where $C_{l,2j+1,-1,s}\in r^{-2i\alpha_0-2\nu(2j+1)-2l}\mathcal{B}_{l-1}$ satisfies
\begin{equation}\label{E2.113}
C_{l,2j+1,-1,s}=0,\quad 2j>(2N+1)(2l-1).
\end{equation}
Equation \eqref{E2.112} exists a unique solution $g_{l,2j+1,-1,s}\in r^{-2i\alpha_0-2\nu(2j+1)-2l}\mathcal{B}_l$. More precisely, it can be written as
\begin{align*}
g_{l,2j+1,-1,s}=&r^{-2i\alpha_0-2\nu(2j+1)-l-1}\left(1+\left|f_0\right|^2\right) \hat{g}_{l.2j+1,-1,s},\\
\hat{g}_{l,2j+1,-1,s}=&\int_0^r\rho^{-l}\left[\hat{C}_{l.2j+1,-1,s} -\sum_{0\leq n\leq \frac{l-1}{4\nu}} \sum_{p=0}^{2n}\beta_{n,p}\rho^{4\nu n}(\ln\rho)^p\right]d\rho\\
&-\int_r^\infty\rho^{-l}\sum_{0\leq n\leq \frac{l-1}{4\nu}} \sum_{p=0}^{2n}\beta_{n,p}\rho^{4\nu n}(\ln\rho)^pd\rho,
\end{align*}
where
\begin{align*}
&\hat{C}_{l,2j+1,-1,s}=r^{2i\alpha_0+2\nu(2j+1)+2l} \frac{1}{1+\left|f_0\right|^2} C_{l,2j+1,-1,s},\\
&\hat{C}_{l,2j+1,-1,s}=\sum_{n=0}^\infty\sum_{p=0}^{2n}\beta_{n,p}r^n(\ln r)^p,\quad r<\delta.
\end{align*}
By \eqref{E2.113}, we get
$$
g_{l,2j+1,-1,s}=0,\quad 2j+1>(2N+1)(2l-1).
$$
\end{proof}

We define
\begin{align*}
&w^{(N)}_{\text{rem}}(r,t)= f_0(r)+\sum_{(k,q,m,s)\in\Omega\atop k\leq N}t^{k+2\nu q}e^{-im\Phi} (\ln r-\ln t)^s g_{k,q,m,s}(r),\\
&A^{(N)}_{\text{rem}}(r,t)=-i\partial_tw^{(N)}_{\text{rem}} -\Delta w^{(N)}_{\text{rem}} +\frac{1}{r^2}w^{(N)}_{\text{rem}} +G\left(w^{(N)}_{\text{rem}}, \bar{w}^{(N)}_{\text{rem}}, \partial_rw^{(N)}_{\text{rem}}\right),\\
&W^{(N)}_{\text{rem}}(r,t)=w^{(N)}_{\text{rem}}\left(rt^{-\frac{1}{2}}, (\mathfrak{a}_1-i\mathfrak{a}_2)t\right).
\end{align*}
According to the previous analysis, we obtain
\begin{lemma}\label{lem2.10}
There exist $T(N,\delta)>0$ and $C=C(\mathfrak{a}_1,\mathfrak{a}_2,\nu)>0$ such that for any $0<t\leq T(N,\delta)$, the following hold.
\begin{description}
  \item[(i)] For any $0\leq l$, $k\leq4$, $i=0,1$ and $\frac{1}{10}t^{-\varepsilon_2}\leq y\leq 10t^{-\varepsilon_2}$, if $N$ sufficiently large (and depends on $\varepsilon_2$), then
      \begin{equation}\label{E2.114}
      \left|y^{-l}\partial_y^k\partial_t^i \left(W^{(N)}_{ss}-W^{(N)}_{\text{rem}}\right)\right|\leq t^{\nu(1-2\varepsilon_2)N} +t^{\varepsilon_2N}.
      \end{equation}
  \item[(ii)] The profile $w^{(N)}_{\text{rem}}(r,t)$ satisfies
  \begin{align}
  &\left\|r^{-l}\partial_r^k \left(w^{(N)}_{\text{rem}}(t)-f_0\right)\right\|_{L^2(rdr,r\geq \frac{1}{10}t^{\frac{1}{2}-\varepsilon_2} )}\leq Ct^\eta,\quad 0\leq k+l\leq3, \label{E2.115}\\
  &\left\|r\partial_r w^{(N)}_{\text{rem}}(t)\right\|_{L^\infty(r\geq \frac{1}{10}t^{\frac{1}{2}-\varepsilon_2} )}\leq C\delta^{2\nu},\label{E2.116}\\
  &\left\|r^{-l}\partial_r^k w^{(N)}_{\text{rem}}(t)\right\|_{L^\infty(r\geq \frac{1}{10}t^{\frac{1}{2}-\varepsilon_2} )}\leq C\left(\delta^{2\nu-k-l} +t^{\nu-\frac{k+l}{2}+\eta}\right),\label{E2.117}\\
  &\quad \quad \quad \quad \quad \quad \quad \quad \quad \quad \quad \quad \quad \quad \quad \quad \quad\quad\quad \quad \quad \quad 0\leq k+l\leq4,\notag\\
  &\left\|r^{-l-1}\partial_r^k w^{(N)}_{\text{rem}}(t)\right\|_{L^\infty(r\geq \frac{1}{10}t^{\frac{1}{2}-\varepsilon_2} )}\leq C\left(\delta^{2\nu-6} +t^{\nu-3+\eta}\right),\quad k+l=5.\label{E2.118}
  \end{align}
  \item[(iii)] If $N$ sufficiently large, then the error $A^{(N)}_{\text{rem}}(r,t)$ satisfies
  \begin{equation}\label{E2.119}
  \left\|r^{-l}\partial_r^k\partial_t^i A^{(N)}_{\text{rem}}(t)\right\|_{L^2(rdr,r\geq \frac{1}{10}t^{\frac{1}{2}-\varepsilon_2} )}\leq t^{\varepsilon_2N},\quad 0\leq k+l\leq3,\, i=0,1.
  \end{equation}
\end{description}
\end{lemma}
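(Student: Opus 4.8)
The plan is to read Lemma~\ref{lem2.10} off the construction already carried out, handling its three parts in turn after one structural observation. First I would note that $A^{(N)}_{\mathrm{rem}}$ is, by the very way it is built, nothing but the part of the formal expansion of the residual that the truncation $k\le N$ fails to cancel: since the hierarchy \eqref{E2.93}--\eqref{E2.95} is triangular in $k$ — each $\Psi^{lin}_{k,q,m,s}$ and $\Psi^{nl}_{k,q,m,s}$ depends only on the $g_{k',q',m',s'}$ with $k'<k$, all of which have been determined — substituting $w^{(N)}_{\mathrm{rem}}$ into \eqref{E2.86} leaves only terms of order $k\ge N+1$, i.e. a sum of the shape $\sum_{k\ge N+1}t^{2\nu q+k-2}e^{-im\Phi}(\ln r-\ln t)^s\big(\Psi^{lin}_{k,q,m,s}+\Psi^{nl}_{k,q,m,s}\big)$ over $(k,q,m,s)\in\Omega$. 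By Remark~\ref{re2.8} and \eqref{E2.104} only finitely many $q$ contribute at each $k$, and the $k$-sum is finite because every $g_{k,q,m,s}$ — hence every $\Psi$ — vanishes once $k>N$; so $A^{(N)}_{\mathrm{rem}}$ is a finite, explicitly structured sum, and likewise $w^{(N)}_{\mathrm{rem}}-f_0$ is the analogous finite sum with $1\le k\le N$. The whole lemma then reduces to estimating one such term at a time, after the change of variables $r\mapsto rt^{-1/2}$, $t\mapsto(\mathfrak{a}_1-i\mathfrak{a}_2)t$ relating $w^{(N)}_{\mathrm{rem}}$, $A^{(N)}_{\mathrm{rem}}$ to their capitalized counterparts.

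For those term-by-term estimates I would use the $\mathcal{A}_m/\mathcal{B}_k$ calculus of Lemma~\ref{lem2.9}: each $g_{k,q,m,s}$, and hence — via the propagation rule \eqref{E2.99} — each $\Psi_{k,q,m,s}$, is for $0\le r<\delta$ an absolutely convergent series in $r^{2\nu n}(\ln r)^l$ carrying an overall prefactor $r^{-2\nu q-2k}$ (one power of $r$ more for the $\Psi$'s), while for $r\gtrsim\delta$ it is compactly supported when $m\ne-1$ and a polynomial of degree $\le k-1$ when $m=-1$; in particular every term decays at $r=\infty$. On the region $r\ge\frac1{10}t^{1/2-\varepsilon_2}$ I would convert the negative $r$-powers into powers of $t$ through $r^{-p}\le Ct^{-p(1/2-\varepsilon_2)}$ and check that the oscillatory factors $e^{-im\Phi}$ that actually occur are harmless there — the parabolic part ($\mathfrak{a}_2\ge0$) providing Gaussian damping rather than growth, which is precisely the elimination of the $\mathfrak{a}_1,\mathfrak{a}_2$-dependence recorded in the introduction. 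The resulting power count, together with $k\ge N+1$, makes each summand $O(t^{\varepsilon_2N})$ for $N$ large; the derivatives $\partial_r^k\partial_t^i$ cost only bounded powers of $r^{-1}$ and $t^{-1}$ and are absorbed the same way, giving \eqref{E2.119}. The profile bounds \eqref{E2.115}--\eqref{E2.118} come from the same computation with $1\le k\le N$ retained: the dominant contribution to $w^{(N)}_{\mathrm{rem}}-f_0$ is the $k=1$ term $t\,g_{1,0,0,0}=-i(\mathfrak{a}_1-i\mathfrak{a}_2)t\,\mathcal{D}_0$, and combining the explicit shapes of $f_0$ and of $\mathcal{D}_0$ with $\operatorname{supp}f_0\subset\{r\le2\delta\}$ produces the $\delta$-powers (e.g. $\|r\partial_r w^{(N)}_{\mathrm{rem}}\|_{L^\infty}\lesssim\|r\partial_r f_0\|_{L^\infty}\lesssim\delta^{2\nu}$ since $f_0\sim\beta_0(0,0)\,r^{2i\alpha_0+2\nu}$ near $0$), the other summands contributing the $t^{\eta}$ terms.

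For the matching estimate \eqref{E2.114} I would compare, on $\frac1{10}t^{-\varepsilon_2}\le y\le10t^{-\varepsilon_2}$, both $W^{(N)}_{ss}$ and $W^{(N)}_{\mathrm{rem}}$ with the common formal object — the self-similar series truncated at $j\le N$, equivalently its reorganization \eqref{E2.84}. By Lemma~\ref{lem2.6} the $y\to\infty$ asymptotics \eqref{E2.57}--\eqref{E2.60} of $W^{ss}_{j,l}$ have leading coefficients $\hat{w}_0^{j,l,0}$ and $\hat{w}_0^{j,l,-1}$, and these are exactly $\beta_0(j,l)$ and $\beta_1(j,l)$, the constants fed into $f_0$ and into the $g_{1,2j+1,-1,s}$ by \eqref{E2.98}; hence $w^{(N)}_{\mathrm{rem}}$ reproduces, term by term, the large-$y$ expansion of $W^{(N)}_{ss}$ up to order $N$. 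The discrepancy therefore splits into the tail of that expansion beyond the truncation order — which on $y\sim t^{-\varepsilon_2}$ is $O(t^{\nu(1-2\varepsilon_2)N})$, each omitted factor of $y^{-2}$ costing $t^{\nu(1-2\varepsilon_2)}$ after inserting $y=rt^{-1/2}$ — and the tail of the series defining $w^{(N)}_{\mathrm{rem}}$ beyond $k=N$, which by the previous paragraph is $O(t^{\varepsilon_2N})$; adding these gives \eqref{E2.114}, and since all the expansions involved are term-by-term differentiable the same bound holds with $y^{-l}\partial_y^k\partial_t^i$ in front.

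The main obstacle is not any individual estimate but the bookkeeping of the middle step: carrying the quadruply-indexed families $g_{k,q,m,s}$, $\Psi_{k,q,m,s}$ through the $\mathcal{A}_m/\mathcal{B}_k$ calculus and converting their $r$-power--log expansions into the stated $t$-power bounds while keeping every oscillatory factor $e^{-im\Phi}$ uniformly controlled. This last point is the genuinely new feature compared with the Schr\"odinger-map case of \cite{Perelman14}: the mixed parabolic/dispersive structure ($\mathfrak{a}_2>0$) makes the phases complex, and one has to verify that the exponentials generated by the nonlinear interactions do not grow on the remote region — which is exactly the place where the elimination phenomenon noted in the introduction is used.
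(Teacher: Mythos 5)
Your proposal follows essentially the same route as the paper: the paper gives no separate argument for Lemma \ref{lem2.10} beyond ``according to the previous analysis'', which is precisely the term-by-term power counting in the classes $\mathcal{A}_m$, $\mathcal{B}_k$ from Lemma \ref{lem2.9}, the truncation/vanishing bounds \eqref{E2.104}, the coefficient matching $\beta_0(j,l)=\hat{w}_0^{j,l,0}$, $\beta_1(j,l)=\hat{w}_0^{j,l,-1}$ behind \eqref{E2.114}, and the explicit form of $f_0$ behind \eqref{E2.115}--\eqref{E2.118} that you spell out. The two points you leave partly unchecked (the non-polynomial nonlinearity, so the residual is not literally a finite combination of the $\Psi_{k,q,m,s}$ but carries a higher-order Taylor remainder, and the modulus of the factors $e^{-im\Phi}$ for $m>0$ when $\mathfrak{a}_2>0$, which requires tracking $\Phi$ versus $\bar{\Phi}$) are treated at exactly the same level of informality by the paper itself, so they do not constitute a gap relative to the paper's own proof.
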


\subsection{Proof of Proposition \ref{pro2.1}}
Now we start to prove Proposition \ref{pro2.1}. Fix $\varepsilon_2$ satisfying $0<\varepsilon_2<\frac{1}{2}$. For $N\geq2$, we define
\begin{align*}
\hat{W}_{\text{ex}}^{(N)}(\rho,t) =&\theta\left(t^{\nu-\varepsilon_1}\rho\right) W^{(N)}_{\text{in}}\left(t^\nu \rho,t\right)\\ &+\left[1-\theta\left(t^{\nu-\varepsilon_1}\rho\right)\right] \theta\left(t^{\nu+\varepsilon_2}\rho\right) W^{(N)}_{ss}\left(t^\nu\rho,t\right)\\
&+\left(1-\theta\left(t^{\nu+\varepsilon_2}\rho\right)\right)  w_{\text{rem}}^{(N)}\left(t^{\nu+\frac{1}{2}}\rho,t\right),\\
V_{\text{ex}}^{(N)}(\rho,t)=&\left(\frac{2\operatorname{Re}\left( \hat{W}_{\text{ex}}^{(N)}\right)}{1+\left|\hat{W}_{\text{ex}}^{(N)}\right|^2}, \frac{2\operatorname{Im}\left( \hat{W}_{\text{ex}}^{(N)}\right)}{1+\left|\hat{W}_{\text{ex}}^{(N)}\right|^2}, \frac{1-\left|\hat{W}_{\text{ex}}^{(N)}\right|^2}{1+\left|\hat{W}_{\text{ex}}^{(N)}\right|^2}\right).
\end{align*}
Then $V_{\text{ex}}^{(N)}(\rho,t)$ is well-defined for $\rho$ sufficiently large. In addition, for $\rho<t^{-\nu+\varepsilon_1}$, we take $V_{\text{ex}}^{(N)}(\rho,t)$ to be $V_{\text{in}}^{(N)}(\rho,t)$. That is, we assume
\begin{align*}
&V^{(N)}(\rho,t)=V^{(N)}_{\text{in}}(\rho,t),\quad \rho\leq\frac{1}{2}t^{-\nu+\varepsilon_1},\\
&V^{(N)}(\rho,t)=V^{(N)}_{\text{ex}}(\rho,t),\quad \rho\geq\frac{1}{2}t^{-\nu+\varepsilon_1},\\
&u^{(N)}(x,t)=e^{\left(\alpha(t)+\theta\right)R}V^{(N)}\left(\lambda(t)|x|,t\right).
\end{align*}
Thus, we obtain a $1$-equivariant $C^\infty$ profile $u^{(N)}: \mathbb{R}^2\times\mathbb{R}^*_+\rightarrow\mathbb{S}^2$. According to $\mathbf{(i)}$ in Lemma \ref{lem2.3}, $\mathbf{(ii)}$ in Lemma \ref{lem2.7} and $\mathbf{(ii)}$ in Lemma \ref{lem2.10}, we obtain that for any $N\geq2$, $u^{(N)}$ satisfies $\mathbf{(i)}$ in Proposition \ref{pro2.1}, where $\zeta_N^*$ is given by
\begin{align*}
\zeta_N^*(x)=e^{\theta R}\hat{\zeta}_N^*\left(|x|\right),
\end{align*}
here $\hat{\zeta}_N^*$ is defined by
\begin{align*}
\hat{\zeta}_N^*=\left(\frac{2\operatorname{Re}\left(f_0\right)}{1+\left|f_0\right|^2}, \frac{2\operatorname{Im}\left(f_0\right)}{1+\left|f_0\right|^2}, \frac{1-\left|f_0\right|^2}{1+\left|f_0\right|^2}\right).
\end{align*}
According to $\mathbf{(ii)}$ in Lemma \ref{lem2.3}, $\mathbf{(i)}$ in Lemma \ref{lem2.7} and $\mathbf{(i)}$ and $\mathbf{(iii)}$ in Lemma \ref{lem2.10}, we obtain that for any $N$ sufficiently large, the error $r^{(N)}=-u_t^{(N)}+\mathfrak{a}_1 u^{(N)}\times\Delta u^{(N)}- \mathfrak{a}_2u^{(N)}\times(u^{(N)}\times\Delta u^{(N)})$ satisfies
$$
\left\|r^{(N)}(t)\right\|_{H^3} +\left\|\partial_tr^{(N)}(t)\right\|_{H^1} +\left\|\langle x\rangle r^{(N)}(t)\right\|_{L^2}\leq t^{\eta N},\quad t\leq T(N,\delta),
$$
where $\eta=\eta(\nu,\varepsilon_2)>0$. Rewriting $N=\frac{N}{\eta}$, we obtain a family of approximate solutions $u^{(N)}(t)$ satisfying Proposition \ref{pro2.1}.

\section{The proof of Theorem \ref{MT}}\label{Sec3}
In this section, we prove the theorem \ref{MT} using the compactness method, which relies on the following auxiliary proposition.
\subsection{Auxiliary proposition}
Let $u^{(N)}$ and $T=T(N,\delta)$ be as stated in Proposition \ref{pro2.1}, we consider the following Cauchy problem:
\begin{equation}\label{E3.1}
\begin{cases}
&u_t=\mathfrak{a}_1u\times\Delta u -\mathfrak{a}_2u\times(u\times\Delta u),\quad t\geq t_1,\\
&u|_{t=t_1}=u^{(N)}(t_1),
\end{cases}
\end{equation}
where $0<t_1<T$.

We have the following result.
\begin{proposition}\label{pro3.1}
For any $N$ sufficiently large, there exists $0<t_0< T$ such that for any $t_1\in(0,t_0)$, \eqref{E3.1} exists a solution $u(t)$ satisfying
\begin{description}
  \item[(i)] $u-u^{(N)}\in C\left([t_1,t_0],H^3\right)$ and
  \begin{equation}\label{E3.2}
  \left\|u-u^{(N)}\right\|_{H^3}\leq t^{\frac{N}{2}},\quad \forall t_1\leq t\leq t_0.
  \end{equation}
  \item[(ii)] $\langle x\rangle\left(u(t)-u^{(N)}(t)\right)\in L^2$ and
  \begin{equation}\label{E3.3}
  \left\|\langle x\rangle\left(u(t)-u^{(N)}(t)\right)\right\|_{L^2}\leq t^{\frac{N}{2}},\quad \forall t_1\leq t\leq t_0.
  \end{equation}
\end{description}
\end{proposition}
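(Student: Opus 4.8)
The plan is to set $\zeta=u-u^{(N)}$ and derive the equation satisfied by $\zeta$ on $[t_1,t_0]$, then solve it forward in time by a standard local well-posedness plus continuation/bootstrap argument, with the smallness of the error $r^{(N)}$ (Proposition~\ref{pro2.1}(ii)) driving the bound \eqref{E3.2}. Writing $u=u^{(N)}+\zeta$ and using $r^{(N)}=-u_t^{(N)}+\mathfrak{a}_1u^{(N)}\times\Delta u^{(N)}-\mathfrak{a}_2u^{(N)}\times(u^{(N)}\times\Delta u^{(N)})$, the difference $\zeta$ satisfies a quasilinear parabolic-dispersive equation of the schematic form
\begin{equation*}
\zeta_t=\mathfrak{a}_1\big(u^{(N)}\times\Delta\zeta+\zeta\times\Delta u^{(N)}\big)-\mathfrak{a}_2\,u^{(N)}\times(u^{(N)}\times\Delta\zeta)+\mathcal{B}(\zeta,\nabla\zeta)-r^{(N)},
\end{equation*}
where $\mathcal{B}$ collects the terms that are at least quadratic in $\zeta$ together with the lower-order linear terms (those with coefficients built from $u^{(N)}$ and its first two derivatives, which are bounded by Proposition~\ref{pro2.1}(i)). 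The constraint $|u|^2=|u^{(N)}|^2=1$ forces $u^{(N)}\cdot\zeta=O(|\zeta|^2)$, which is what keeps the nonlinearity genuinely quadratic and compatible with the $H^3$ framework; the hypothesis $\nu>1$ is used exactly here, since it guarantees (via Proposition~\ref{pro2.1}(i)) that $u^{(N)}\in\dot H^1\cap\dot H^3$ uniformly, so $H^3$ is the natural energy space and the principal part $-\mathfrak{a}_2u^{(N)}\times(u^{(N)}\times\Delta\cdot)$ is a uniformly parabolic operator (recall $\mathfrak{a}_2\ge 0$; if $\mathfrak{a}_2=0$ one is in the Schrödinger-map case of \cite{Perelman14}).

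Next I would run an energy estimate for $\zeta$ in $H^3$. Apply $\partial^\beta$ for $|\beta|\le 3$, pair with $\partial^\beta\zeta$ (using the geometric structure to handle the $u\times(u\times\Delta\cdot)$ term, which produces a good sign $-\mathfrak{a}_2\|\nabla\partial^\beta\zeta\|_{L^2}^2$ up to commutators, while the Hamiltonian term $\mathfrak{a}_1u\times\Delta\zeta$ is skew and contributes nothing to leading order), and control commutators and the quadratic terms $\mathcal{B}$ by the $\dot W^{k,\infty}$-bounds on $u^{(N)}$ from \eqref{E2.10}–\eqref{E2.11} together with $H^3\hookrightarrow L^\infty$ in two dimensions. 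This yields a differential inequality
\begin{equation*}
\frac{d}{dt}\|\zeta(t)\|_{H^3}^2\le C\big(1+\|\zeta\|_{H^3}\big)\|\zeta\|_{H^3}^2+C\|r^{(N)}\|_{H^3}\|\zeta\|_{H^3},
\end{equation*}
and since $\|r^{(N)}(t)\|_{H^3}\le t^N$ and $\zeta(t_1)=0$, a Gronwall/bootstrap argument on the interval $[t_1,t_0]$ with $t_0$ small gives $\|\zeta(t)\|_{H^3}\le t^{N/2}$, which is \eqref{E3.2}; the continuation criterion of the $H^3$-local well-posedness theory then upgrades the a priori bound to an actual solution on all of $[t_1,t_0]$, uniformly in $t_1$. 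For \eqref{E3.3} I would run the analogous weighted $L^2$ estimate: multiply the $\zeta$-equation by $\langle x\rangle^2\zeta$, integrate, and note that the commutator of $\langle x\rangle$ with $\Delta$ produces terms of the form $\langle x\rangle^{-1}\nabla\zeta$ and $\langle x\rangle^{-2}\zeta$ already controlled by the $H^3$-bound, while the source term contributes $\|\langle x\rangle r^{(N)}\|_{L^2}\le t^N$ by \eqref{E2.11}; again Gronwall with zero initial data gives $\|\langle x\rangle\zeta(t)\|_{L^2}\le t^{N/2}$.

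The main obstacle is the quasilinear, non-self-adjoint structure of the leading operator: because $\mathfrak{a}_1\neq 0$ in general, the equation is a genuine mixture of heat and Schrödinger type, so the energy method must simultaneously exploit the parabolic gain from the $\mathfrak{a}_2$-term and the skew-symmetry of the $\mathfrak{a}_1$-term, and the commutators between $\partial^\beta$ (or $\langle x\rangle$) and the variable coefficient $u^{(N)}(x,t)$ must be shown to be lower order — this is where the precise decay/growth estimates \eqref{E2.10}–\eqref{E2.11} and the remark after Proposition~\ref{pro2.1} (arbitrary-order bounds $\|\langle x\rangle^l\partial_t^m r^{(N)}\|_{H^k}\lesssim t^{N-C}$ for $N$ large) are indispensable. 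A secondary technical point is establishing the $H^3$ local theory and its continuation criterion for \eqref{E3.1} with the nonlinear constraint $u\in\mathbb{S}^2$; this is essentially standard for Landau–Lifshitz with $\mathfrak{a}_2>0$ (see \cite{Melcher05}\cite{GH93}) and for $\mathfrak{a}_2=0$ reduces to the Schrödinger-map well-posedness of \cite{McGahagan07}, so I would invoke it rather than reprove it, being careful only to state it in a form uniform on $[t_1,t_0]$ as $t_1\to 0$.
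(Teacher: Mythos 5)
There is a genuine gap, and it sits at the heart of your energy estimate. You claim the linear terms with coefficients built from $u^{(N)}$ and its first two derivatives are ``bounded by Proposition~\ref{pro2.1}(i)'' and arrive at a differential inequality with a time-uniform constant $C$. But in the original variables the approximate solution is the rescaled soliton $e^{\alpha(t)R}\phi(\lambda(t)x)+\dots$ with $\lambda(t)=t^{-1/2-\nu}\to\infty$, so $\|\nabla^k u^{(N)}(t)\|_{L^\infty}\sim\lambda^k(t)=t^{-k(1/2+\nu)}$; the term $\zeta\times\Delta u^{(N)}$ alone contributes $\sim t^{-1-2\nu}\|\zeta\|_{H^3}^2$ to your inequality, not $C\|\zeta\|_{H^3}^2$. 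Gronwall then produces a factor $\exp\bigl(Ct^{-2\nu}\bigr)$, which blows up as $t_1\to0$ and destroys the bound \eqref{E3.2} uniformly in $t_1$, no matter how large $N$ is. This is not a technical commutator issue that \eqref{E2.10}--\eqref{E2.11} can absorb: even after rescaling, the soliton $\phi$ is order one, so the linearization around it is not small, and a direct $H^3$ energy estimate (applying $\partial^\beta$, $|\beta|\le3$, and pairing) does not close.

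The paper's proof overcomes exactly this obstruction by three devices you do not have. First, it works in the renormalized variables $u(x,t)=e^{\alpha(t)R}\bigl(U^{(N)}+S\bigr)(\lambda(t)x,t)$, so the equation for $S$ carries the factor $t^{1+2\nu}$ in front of $S_t$ (see \eqref{E3.4}) and the goal becomes a Gronwall factor $C/t$, which is harmless against a forcing $t^{2N}$ once $N$ is large. Second, it uses the modified energy $J_1=\int\bigl(|\nabla S|^2+\kappa(\rho)|S|^2\bigr)dy$: the identity $\Delta\phi=\kappa\phi$ makes the order-one linearized term cancel exactly, $\bigl(S\times\Delta\phi,\Delta S\bigr)-\kappa\bigl(\phi\times\Delta S,S\bigr)=0$, so that what remains involves only $\chi^{(N)}$, which is $O(t^{2\nu})$-small by Proposition~\ref{pro2.1}; the dissipative $\mathfrak{a}_2$-terms are handled through the good-sign combination and the frame decomposition $\{f_1,f_2,Q\}$, again extracting the $t^{2\nu}$ smallness. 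Third, the $\dot H^3$ control is not obtained by differentiating the equation three times but through the functional $J_3$ built on $s_t$ (equivalently $g=(U^{(N)}+S)\times\Delta S+S\times\Delta U^{(N)}+R^{(N)}$), with $\|S\|_{H^3}$ recovered afterwards from the equation; moreover the weighted norm $\||y|S\|_{L^2}$ is bootstrapped jointly with $J_3$ because it appears on the right-hand side of the $H^3$-level estimate, so your treatment of the weight as an independent afterthought for part (ii) would also not close the argument as written.
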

\begin{proof}
Our proof is to use the bootstrap argument. Let
\begin{align*}
&u^{(N)}(x,t)=e^{\alpha(t)R}U^{(N)}\left(\lambda(t)x,t\right),\\
&r^{(N)}(x,t)=\lambda^2(t)e^{\alpha(t)R}R^{(N)}\left(\lambda(t)x,t\right),\\
&u(x,t)=e^{\alpha(t)R}U\left(\lambda(t)x,t\right),\\
&U(y,t)=U^{(N)}(y,t)+S(y,t),\\
&U^{(N)}(y,t)=\phi(y)+\chi^{(N)}(y,t).
\end{align*}
Then, $S(t)$ solves
\begin{align}\label{E3.4}
&t^{1+2\nu}S_t+\alpha_0 t^{2\nu}RS-t^{2\nu}\left(\nu+\frac{1}{2}\right)y\cdot\nabla \notag\\
=& \mathfrak{a}_1\left(S\times\Delta U^{(N)}+U^{(N)}\times\Delta S +S\times\Delta S\right)\notag\\
&-\mathfrak{a}_2U\times\left(U\times\Delta U\right) +\mathfrak{a}_2U^{(N)}\times\left(U^{(N)}\times\Delta U^{(N)}\right)+R^{N}\\
=&\mathfrak{a}_1\left(S\times\Delta U^{(N)}+U^{(N)}\times\Delta S +S\times\Delta S\right)+R^{N}\notag\\
&-\mathfrak{a}_2U\times\left(U\times\Delta S\right) -\mathfrak{a}_2S\times\left(U\times\Delta U^{(N)}\right) -\mathfrak{a}_2U^{(N)}\times\left(S\times\Delta U^{(N)}\right).\notag
\end{align}

Suppose that
\begin{equation}\label{E3.5}
\|S\|_{L^\infty\left(\mathbb{R}^2\right)}\leq\delta_1,
\end{equation}
where $\delta_1$ sufficiently small. Note that $S$ is 1-equivariant and satisfies
\begin{equation}\label{E3.6}
2(\phi,S)+2(\chi^{(N)},S)+|S|^2=0,
\end{equation}
where $\left\|\chi^{(N)}\right\|_{L^\infty\left(\mathbb{R}^2\right)}\leq C\delta^{2\nu}$ (see \eqref{E2.5}). Thus, the bootstrap hypothesis \eqref{E3.5} implies
\begin{equation}\label{E3.7}
\left\|S\right\|_{L^\infty\left(\mathbb{R}^2\right)}\leq C\left\|\nabla S\right\|_{L^2\left(\mathbb{R}^2\right)}.
\end{equation}

\subsubsection{Energy control}
We first derive the bootstrap control for the following energy norm:
$$
J_1(t)=\int_{\mathbb{R}^2}\left(|\nabla S|^2+\kappa(\rho)|S|^2\right)dy,\quad \rho=|y|.
$$
By \eqref{E3.4}, we get
\begin{align}\label{E3.8}
&t^{1+2\nu}\frac{d}{dt}\int|\nabla S|^2dy\notag\\
=&-2 \mathfrak{a}_1\int(S\times\Delta U^{(N)},\Delta S)dy +2\int(\nabla R^{(N)},\nabla S)dy\notag\\
&+2 \mathfrak{a}_2\int\left(U\times\left(U\times\Delta S\right),\Delta S\right)dy +2 \mathfrak{a}_2\int\left(S\times\left(U\times\Delta U^{(N)}\right),\Delta S\right)dy\\
&+2 \mathfrak{a}_2\int\left(U^{(N)}\times\left(S\times\Delta U^{(N)}\right),\Delta S\right)dy,\notag
\end{align}
and
\begin{align}\label{E3.9}
&t^{1+2\nu}\frac{d}{dt}\int\kappa(\rho)|S|^2dy\notag\\
=&-\left(\frac{1}{2}+\nu\right)t^{2\nu} \int\left(2\kappa+\rho\kappa'\right)(S,S)dy\notag\\
&+2\mathfrak{a}_1\int\kappa\left(U^{(N)}\times\Delta S,S\right)dy +2\int\kappa\left(R^{N},S\right)dy\notag\\
&-2 \mathfrak{a}_2\int\kappa\left(U\times\left(U\times\Delta S\right),\Delta S\right)dy\\
&-2 \mathfrak{a}_2\int\kappa\left(S\times\left(U\times\Delta U^{(N)}\right),\Delta S\right)dy\notag\\
&-2 \mathfrak{a}_2\int\kappa\left(U^{(N)}\times\left(S\times\Delta U^{(N)}\right),\Delta S\right)dy.\notag
\end{align}
Since $U^{(N)}=\phi+\chi^{(N)}$, where $\phi$ satisfies $\Delta\phi=\kappa\phi$, we get
$$
\left(S\times\Delta\phi,\Delta S\right)-\kappa\left(\phi\times\Delta S,S\right)=0.
$$
This combined with \eqref{E3.8} and \eqref{E3.9} gives
$$
t^{1+2\nu}\frac{d}{dt}J_1(t) =\sum_{i=1}^{10}\mathcal{E}_i,
$$
where
\begin{align*}
\mathcal{E}_1=&-2\mathfrak{a}_1\int\left(S\times\Delta\chi^{(N)},\Delta S\right)dy,\\
\mathcal{E}_2=&2\mathfrak{a}_1\int\kappa\left(\chi^{(N)}\times \Delta S,S\right)dy,\\
\mathcal{E}_3=&-\left(\frac{1}{2}+\nu\right)t^{2\nu}\int \left(2\kappa+\rho\kappa'\right) (S,S)dy,\\
\mathcal{E}_4=&2\int\left[\left(\nabla R^{(N)},\nabla S\right) +\kappa\left(R^{(N)},S\right)\right]dy\\
\mathcal{E}_5=&2 \mathfrak{a}_2\int\left(U\times\left(U\times\Delta S\right),\Delta S\right)dy =-2 \mathfrak{a}_2\int\left|\left(U\times\Delta S\right)\right|^2dy,\\
\mathcal{E}_6=&2 \mathfrak{a}_2\int\left(S\times\left(U\times\Delta U^{(N)}\right),\Delta S\right)dy =-2 \mathfrak{a}_2\int\left(U\times\Delta U^{(N)},S\times\Delta S\right)dy,\\
\mathcal{E}_7=&2 \mathfrak{a}_2\int\left(U^{(N)}\times\left(S\times\Delta U^{(N)}\right),\Delta S\right)dy=-2 \mathfrak{a}_2\int\left(S\times\Delta U^{(N)},U^{(N)}\times\Delta S\right)dy,\\
\mathcal{E}_8=&-2 \mathfrak{a}_2\int\kappa\left(U\times\left(U\times\Delta S\right),\Delta S\right)dy =2 \mathfrak{a}_2\int\kappa\left|\left(U\times\Delta S\right)\right|^2dy,\\
\mathcal{E}_9=&-2 \mathfrak{a}_2\int\kappa\left(S\times\left(U\times\Delta U^{(N)}\right),\Delta S\right)dy=2 \mathfrak{a}_2\int\kappa\left(U\times\Delta U^{(N)},S\times\Delta S\right)dy,\\
\mathcal{E}_{10}=&-2 \mathfrak{a}_2\int\kappa\left(U^{(N)}\times\left(S\times\Delta U^{(N)}\right),\Delta S\right)dy =2 \mathfrak{a}_2\int\kappa\left(S\times\Delta U^{(N)},U^{(N)}\times\Delta S\right)dy,
\end{align*}
By Proposition \ref{pro2.1}, we obtain
\begin{align}\label{E1-8}
&\left|\mathcal{E}_j\right|\leq Ct^{2\nu}\left\|S\right\|_{H^1}^2,\quad j=1,2,3,\notag\\
&\left|\mathcal{E}_4\right|\leq Ct^{N+\nu+\frac{1}{2}}\left\|\nabla S\right\|_{L^2},\\
&\mathcal{E}_5+\mathcal{E}_8\leq0.\notag
\end{align}
For $\mathcal{E}_i$, $i=6,7,9,10$, we decompose $U^{(N)}$ and $S$ in the basis $\{f_1,f_2,Q\}$:
\begin{equation}\small\label{decUS}
\begin{split}
&U^{(N)}(y,t)=e^{\theta R}\left[\left(1+z_3^{(N)}(\rho,t)\right)Q(\rho) +z_1^{(N)}(\rho,t)f_1(\rho)+z_2^{(N)}(\rho,t)f_2(\rho)\right],\\
&S(y,t)=e^{\theta R}\left[\zeta_3(\rho,t)Q(\rho) +\zeta_1(\rho,t)f_1(\rho)+\zeta_2(\rho,t)f_2(\rho)\right].
\end{split}
\end{equation}
Thus,
\begin{align*}
\mathcal{E}_6=&-2 \mathfrak{a}_2\int\left(U\times\Delta U^{(N)},S\times\Delta S\right)dy\\
=&-2 \mathfrak{a}_2\int\left(\Upsilon_1\times\Upsilon_2, \zeta\times\Upsilon_3\right)\rho d\rho,
\end{align*}
where
\begin{align*}
&z^{(N)}=\left(z_1^{(N)},z_2^{(N)},z_3^{(N)}\right), \quad\zeta=\left(\zeta_1,\zeta_2,\zeta_3\right),\\
&\Upsilon_1=\mathbf{k}+z^{(N)}+\zeta,\quad |\Upsilon_1|=1,\\
&\Upsilon_2=\Delta z^{(N)}+\Upsilon_{21},\\
&\Upsilon_{21}=\left(-\frac{z_1^{(N)}}{\rho^2}-2\frac{h_1}{\rho}\partial_\rho z_3^{(N)}, -\frac{z_2^{(N)}}{\rho^2},\kappa\left(1+z_3^{(N)}\right) +2\frac{h_1}{\rho}\partial_\rho z_1^{(N)} -2\frac{h_1h_3}{\rho^2}z_1^{(N)}\right),\\
&\Upsilon_3=\Delta \zeta+\Upsilon_{31},\\
&\Upsilon_{31}=\left(-\frac{\zeta_1}{\rho^2}-2\frac{h_1}{\rho}\partial_\rho \zeta_3, -\frac{\zeta_2}{\rho^2}, \kappa\zeta_3 +2\frac{h_1}{\rho}\partial_\rho \zeta_1 -2\frac{h_1h_3}{\rho^2}\zeta_1\right),
\end{align*}
which satisfy
\begin{equation}\label{EofUpsilon}
\begin{split}
&\left|\partial_{\rho}\Upsilon_1\right|\leq C\left(\left|\partial_{\rho}z^{(N)}\right| +\left|\partial_{\rho}\zeta\right|\right),\\
&\left|\Upsilon_{21}\right|\leq C\frac{1}{\rho^2}\left(\left|\partial_{\rho}z^{(N)}\right| +\left|z^{(N)}\right|\right),\\
&\left|\partial_{\rho}\Upsilon_{2}\right|\leq C\left(\left|\partial_{\rho}^3z^{(N)}\right| +\rho^{-3}|z^{(N)}|+\frac{1}{\rho^2}|\partial_{\rho}z^{(N)}|+|z^{(N)}|\right),\\
&\left|\Upsilon_{31}\right|\leq C\frac{1}{\rho^2}\left(|\zeta| +\left|\partial_{\rho}\zeta\right|\right).
\end{split}
\end{equation}
By Proposition \ref{pro2.1}, we get
\begin{equation}\label{EE6}
\begin{split}
\left|\mathcal{E}_6\right|=& 2\mathfrak{a}_2\int\left(\partial_{\rho}\left(\Upsilon_1\times\Upsilon_2\right), \zeta\times\partial_{\rho}\zeta\right)\rho d\rho -2\mathfrak{a}_2\int\left(\Upsilon_1\times\Upsilon_2, \zeta\times\Upsilon_{31}\right)\rho d\rho\\
\leq& Ct^{2\nu}\left(\|S\|_{H^1}^2+\|S\|_{H^1}^2\|\nabla S\|_{L^2}\right).
\end{split}
\end{equation}
For $\mathcal{E}_7$, since
\begin{align*}
\mathcal{E}_7=&-2 \mathfrak{a}_2\int\left(S\times\Delta U^{(N)},U^{(N)}\times\Delta S\right)dy\\
=&-2 \mathfrak{a}_2\int\left(\zeta\times\Upsilon_2, \left(\mathbf{k}+z^{(N)}\right)\times\Upsilon_3\right)\rho d\rho,
\end{align*}
we get
\begin{equation}\label{EE7}
|\mathcal{E}_7|\leq Ct^{2\nu}\left\|S\right\|_{H^1}^2.
\end{equation}
Similarly, we have the estimates:
\begin{equation}\label{E9-10}
\begin{split}
&|\mathcal{E}_9|\leq Ct^{2\nu}\left(\|S\|_{H^1}^2+\|S\|_{H^1}^2\|\nabla S\|_{L^2}\right),\\ &|\mathcal{E}_{10}|\leq Ct^{2\nu}\left\|S\right\|_{H^1}^2.
\end{split}
\end{equation}

Combining  \eqref{E1-8}, \eqref{EE6}, \eqref{EE7} and \eqref{E9-10}, we get
\begin{equation}\label{E3.10}
\left|\frac{d}{dt}J_1(t)\right|\leq C\frac{1}{t}\left(\|S\|_{H^1}^2+\|S\|_{H^1}^2\|\nabla S\|_{L^2}\right)+Ct^{2N-2\nu}.
\end{equation}

\subsubsection{Control the $L^2$ norm}
Now we consider the control of the following norm:
$$
J_0(t)=\int_{\mathbb{R}^2}|S|^2dy.
$$
By \eqref{E3.4}, we obtain
\begin{equation*}
\begin{split}
&t^{1+2\nu}\frac{d}{dt}\int|S|^2dy\\
=&-2\left(\frac{1}{2}+\nu\right)t^{2\nu} \int(S,S)dy +2\mathfrak{a}_1\int\left(U^{(N)}\times\Delta S,S\right)dy +2\int\left(R^{N},S\right)dy\\
&-2 \mathfrak{a}_2\int\left(U\times\left(U\times\Delta S\right), S\right)dy-2 \mathfrak{a}_2\int\left(U^{(N)}\times\left(S\times\Delta U^{(N)}\right), S\right)dy.
\end{split}
\end{equation*}
That is,
\begin{align*}
t^{1+2\nu}\frac{d}{dt}J_0(t)=\sum_{i=11}^{15}\mathcal{E}_i,
\end{align*}
where
\begin{align*}
&\mathcal{E}_{11}=-(1+2\nu)t^{2\nu}J_0(t),\\
&\mathcal{E}_{12}=2\mathfrak{a}_1\int\left(U^{(N)}\times\Delta S, S\right)dy,\\
&\mathcal{E}_{13}=2\int\left(R^{(N)},S\right)dy\\
&\mathcal{E}_{14}=-2 \mathfrak{a}_2\int\left(U\times\left(U\times\Delta S\right), S\right)dy =2 \mathfrak{a}_2\int\left(U\times\Delta S, U^{(N)}\times S\right)dy\\
&\mathcal{E}_{15}=2 \mathfrak{a}_2\int\left(U^{(N)}\times\left(S\times\Delta U^{(N)}\right), S\right)dy =2 \mathfrak{a}_2\int\left(S\times\Delta U^{(N)}, S \times U^{(N)}\right)dy.
\end{align*}
For $\mathcal{E}_{12}$, we recall \eqref{decUS} and decompose $U^{(N)}$ and $S$ in the basis $\{f_1,f_2,Q\}$, then $\mathcal{E}_{12}$ can be rewritten as
$$
\mathcal{E}_{12}=\sum_{i=16}^{18}\mathcal{E}_i,
$$
where
\begin{align*}
&\mathcal{E}_{16}=-4\mathfrak{a}_1\int_{\mathbb{R}^+}\frac{h_1}{\rho}\zeta_2\partial_\rho\zeta_3\rho d\rho,\\
&\mathcal{E}_{17}= -2\mathfrak{a}_1\int_{\mathbb{R}^+}\left(\partial_\rho z^{(N)}\times\partial_\rho\zeta,\zeta\right)\rho d\rho,\\
&\mathcal{E}_{18}=2\mathfrak{a}_1\int_{\mathbb{R}^+}\left(z^{(N)}\times l,\zeta\right)\rho d\rho.
\end{align*}
Here $l$ is given by
\begin{align*}
l= \left(-\frac{1}{\rho^2}\zeta_1 -\frac{2h_1}{\rho}\partial_\rho\zeta_3,\, -\frac{1}{\rho^2}\zeta_2,\, \kappa(\rho)\zeta_3+\frac{2h_1}{\rho}\partial_\rho\zeta_1 -\frac{2h_1h_3}{\rho^2}\partial_\rho\zeta_1\right),
\end{align*}
which satisfies
$$
|l|\leq C\frac{1}{\rho^2}\left(|\zeta|+\left|\partial_\rho\zeta\right|\right).
$$
Thus,
\begin{equation}\label{E3.11}
\left|\mathcal{E}_{18}\right|\leq Ct^{2\nu}\|S\|_{H^1}^2.
\end{equation}
For $\mathcal{E}_{16}$, since
\begin{equation}\label{E3.12}
2\left(\zeta,\mathbf{k}+z^{(N)}\right) +|\zeta|^2=0,
\end{equation}
we get
$$
\left|\partial_\rho\zeta_3\right|\leq C\left(\left|\partial_\rho z^{(N)}\right||\zeta| +\left|z^{(N)}\right|\left|\partial_\rho\zeta\right| +\left|\partial_\rho\zeta\right||\zeta|\right).
$$
Thus,
\begin{equation}\label{E3.13}
\left|\mathcal{E}_{16}\right|\leq C\left(t^{2\nu}\|S\|_{H^1}^2 +\|\nabla S\|_{L^2}^3\right).
\end{equation}
For $\mathcal{E}_{17}$, let $e_0=\mathbf{k}+z^{(N)}$ and $\zeta=\zeta^\bot+\mu e_0$, where  $\mu=(\zeta,e_0)$. By \eqref{E3.12}, we get
$$
|\mu|\leq C|\zeta|^2,\quad \left|\mu_\rho\right|\leq C|\zeta|\left|\partial_\rho\zeta\right|.
$$
Thus, $\mathcal{E}_{17}$ can be rewritten as
\begin{equation}\label{E3.14}
\mathcal{E}_{17}=-2\mathfrak{a}_1\int_{\mathbb{R}_+} \left(\partial_\rho\zeta^\bot\times\zeta^\bot, \partial_\rho e_0\right)\rho d\rho +O\left(\|S\|_{H^1}^2\|\nabla S\|_{L^2}\right).
\end{equation}
Let $\{e_1, e_2\}$ be a set of smooth orthogonal basis of the tangent space $T_{e_0}\mathbb{S}^2$ that satisfies $e_2=e_0\times e_1$, then $\left(\partial_\rho\zeta^\bot\times\zeta^\bot, \partial_\rho e_0\right)$ can be rewritten as
$$
\left(\partial_\rho\zeta^\bot\times\zeta^\bot, \partial_\rho e_0\right) =\left(\zeta^\bot, \partial_\rho e_0\right)\left[\left(\zeta^\bot, e_2\right)\left(\partial_\rho e_0, e_1\right) -\left(\zeta^\bot, e_1\right)\left(\partial_\rho e_0, e_2\right)\right],
$$
which gives
\begin{equation}\label{E3.15}
\left|\int_{\mathbb{R}_+} \left(\partial_\rho\zeta^\bot\times\zeta^\bot, \partial_\rho e_0\right)\rho d\rho\right|\leq \left\|\partial_\rho z^{(N)}\right\|_{L^\infty}^2 J_0(t)\leq t^{2\nu}J_0(t).
\end{equation}
Combining \eqref{E3.11}, \eqref{E3.13}, \eqref{E3.14} and \eqref{E3.15}, we get
\begin{equation}\label{EE12}
|\mathcal{E}_{12}|\leq C\left(t^{2\nu}\|S\|_{H^1}^2 +\|\nabla S\|_{L^2}^3\right)+2|\mathfrak{a}_1|t^{2\nu}J_0(t) +O\left(\|S\|_{H^1}^2\|\nabla S\|_{L^2}\right).
\end{equation}
For $\mathcal{E}_{13}$, by Proposition \ref{pro2.1}, we get
\begin{equation}\label{EE13}
|\mathcal{E}_{13}|\leq Ct^{N+\nu+\frac{1}{2}}\|\nabla S\|_{L^2}.
\end{equation}
For $\mathcal{E}_{14}$, combining
\begin{align*}
\mathcal{E}_{14}=&2 \mathfrak{a}_2\int\left(U\times\Delta S, U^{(N)}\times S\right)dy\\
=&2 \mathfrak{a}_2\int\left(\Upsilon_1\times\Upsilon_3, \left(\mathbf{k}+z^{(N)}\right)\times \zeta\right)\rho d\rho\\
=&-2 \mathfrak{a}_2\int\left(\partial_{\rho} z^{(N)}\times\partial_{\rho}\zeta, \left(\mathbf{k}+z^{(N)}\right)\times \zeta\right)\rho d\rho\\
&-2 \mathfrak{a}_2\int\left(\Upsilon_1\times\partial_{\rho}\zeta, \partial_{\rho}\left[\left(\mathbf{k}+z^{(N)}\right)\times \zeta\right]\right)\rho d\rho\\
&+2 \mathfrak{a}_2\int\left(\Upsilon_1\times\Upsilon_{31}, \left(\mathbf{k}+z^{(N)}\right)\times \zeta\right)\rho d\rho,
\end{align*}
and \eqref{EofUpsilon}, we get
\begin{equation}\label{EE14}
|\mathcal{E}_{14}|\leq Ct^{2\nu}\left(\|S\|_{H^1}^2+\|S\|_{H^1}^2\|\nabla S\|_{L^2}\right).
\end{equation}
For $\mathcal{E}_{15}$, similarly, by
\begin{align*}
\mathcal{E}_{15}=&2 \mathfrak{a}_2\int\left(S\times\Delta U^{(N)}, S \times U^{(N)}\right)dy\\
=&2 \mathfrak{a}_2\int\left(\zeta\times\Upsilon_2, \zeta \times \left(\mathbf{k}+z^{(N)}\right)\right)\rho d\rho\\
=&-2 \mathfrak{a}_2\int\left(\partial_{\rho}\zeta\times\partial_{\rho} z^{(N)}, \zeta \times \left(\mathbf{k}+z^{(N)}\right)\right)\rho d\rho\\
&-2 \mathfrak{a}_2\int\left(\zeta\times\partial_{\rho} z^{(N)}, \partial_{\rho}\left[\zeta \times \left(\mathbf{k}+z^{(N)}\right)\right]\right)\rho d\rho\\
&+2 \mathfrak{a}_2\int\left(\zeta\times\Upsilon_{21}, \zeta \times \left(\mathbf{k}+z^{(N)}\right)\right)\rho d\rho,
\end{align*}
we can obtain
\begin{equation}\label{EE15}
|\mathcal{E}_{15}|\leq C t^{2\nu}\|S\|_{H^1}^2 +2|\mathfrak{a}_2|t^{2\nu}J_0(t).
\end{equation}

Combining \eqref{EE12}, \eqref{EE13}, \eqref{EE14} and \eqref{EE15}, we get
\begin{equation}\label{E3.16}
\left|\frac{d}{dt}J_0(t)\right|\leq C\left(\frac{1}{t}\|S\|_{H^1}^2 +t^{-1-2\nu}\|S\|_{H^1}^2\|\nabla S\|_{L^2} +t^{2N-2\nu}\right).
\end{equation}

\subsubsection{Control of the weighted $L^2$ norm}
By \eqref{E3.4}, we can calculate $\frac{d}{dt}\|yS(t)\|_{L^2}^2$ by
\begin{align*}
&t^{1+2\nu}\frac{d}{dt}\left\||y|S(t)\right\|_{L^2}^2\\
=&-4\mathfrak{a}_1\int y_i\left(U^{(N)}\times\partial_i S, S\right)dy -2\mathfrak{a}_1\int|y|^2\left(\partial_i U^{(N)}\times\partial_i S,S\right)dy\\
&-2(1+2\nu)t^{2\nu}\left\||y|S(t)\right\|_{L^2}^2 +2\int|y|^2\left(R^{(N)},S\right)dy\\
&-2\mathfrak{a}_2\int\left(U\times\left(U\times\Delta S\right),|y|^2S\right)dy\\
&-2\mathfrak{a}_2\int\left(U^{(N)}\times\left(S\times\Delta U^{(N)}\right),|y|^2S\right)dy\\
=&-4\mathfrak{a}_1\int y_i\left(U^{(N)}\times\partial_i S, S\right)dy -2\mathfrak{a}_1\int|y|^2\left(\partial_i U^{(N)}\times\partial_i S,S\right)dy\\
&-2(1+2\nu)t^{2\nu}\left\||y|S(t)\right\|_{L^2}^2 +2\int|y|^2\left(R^{(N)},S\right)dy\\
&-2\mathfrak{a}_2\int|y|^2\left(\partial_iU^{(N)}\times\partial_i S,U^{(N)}\times S\right)dy -2\mathfrak{a}_2\int|y|^2\left(U^{(N)}\times\partial_i S,\partial_iU^{(N)}\times S\right)dy\\
&-4\mathfrak{a}_2\int y_i\left(U^{(N)}\times\partial_i S,U^{(N)}\times S\right)dy -2\mathfrak{a}_2\int|y|^2\left|U^{(N)}\times\partial_i S\right|^2dy\\
&-2\mathfrak{a}_2\int|y|^2\left(\partial_i S\times\partial_iU^{(N)},U^{(N)}\times S\right)dy +2\mathfrak{a}_2\int|y|^2\left|S\times\partial_iU^{(N)}\right|^2dy\\
&+4\mathfrak{a}_2\int y_i\left|S \times\partial_i U^{(N)}\right|^2dy -2\mathfrak{a}_2\int |y|^2\left(S \times\partial_i U^{(N)}, \partial_i U^{(N)}\times \partial_i S\right)dy,
\end{align*}
where $\partial_j$ denotes $\partial_{y_j}$. Here and below we use the convention of implicit summation over repeated indices.

Thus, we obtain
\begin{equation}\label{E3.17}
\left|\frac{d}{dt}\left\||y|S(t)\right\|_{L^2}^2\right|\leq C\left(\left\||y|S(t)\right\|_{L^2}^2+ t^{-4\nu}\|S\|_{H^1}^2 +t^{2N-4\nu}\right).
\end{equation}

\subsubsection{Control of the higher order derivatives}
In addition to the assumption \eqref{E3.5}, we also assume
\begin{equation}\label{E3.18}
\|S(t)\|_{H^3}+\left\||y|S(t)\right\|_{L^2}^2\leq t^{\frac{2N}{5}}.
\end{equation}
We next obtain the control of the $\dot{H}^3$ norm of solutions by estimating $\|\nabla S_t\|_{L^2}$. More precisely, we consider the functional
$$
J_3(t)=t^{2+4\nu}\int\left|\nabla s_t(x,t)\right|^2dx +t^{1+2\nu}\int\kappa\left(t^{-\frac{1}{2}-\nu}x\right)\cdot\left|s_t(x,t)\right|^2dx,
$$
where $s(x,t)$ is defined by
$$
s(x,t)=e^{\alpha(t)R}S\left(\lambda(t)x,t\right).
$$
Let $s_t(x,t)=e^{\alpha(t)R}\lambda^2(t)g(\lambda(t)x,t)$, then $J_3$ can be expressed by a functional of $g$:
$$
J_3(t)=\int\left|\nabla g(y,t)\right|^2dy +\int\kappa(\rho)|g(y,t)|^2dy.
$$
Now we calculate $\frac{d}{dt}J_3(t)$. Note that $g(y,t)$ satisfies
\begin{align}\label{E3.19}
&t^{1+2\nu}g_t+\alpha_0t^{2\nu}Rg -\left(\nu+\frac{1}{2}\right)t^{2\nu}\left(2+y\cdot\nabla\right)g\notag\\
=&\mathfrak{a}_1\left(S+U^{(N)}\right)\times\Delta g +\mathfrak{a}_1g\times\left(\Delta U^{N}+\Delta S\right)\notag\\
&+\mathfrak{a}_1\left(U^{(N)}\times\Delta U^{(N)}-R^{(N)}\right)\times\Delta S\notag\\
&+\mathfrak{a}_1S\times\Delta\left(U^{(N)}\times\Delta U^{(N)}-R^{(N)}\right)\notag\\
&-\mathfrak{a}_2\left(U^{(N)}\times\Delta U^{(N)}-R^{(N)}\right)\times \left[\left(S+U^{(N)}\right)\times\Delta S \right]\notag\\
&-\mathfrak{a}_2g\times \left[\left(S+U^{(N)}\right)\times\Delta S \right] -\mathfrak{a}_2\left(S+U^{(N)}\right)\times \left(g\times\Delta S\right)\notag\\
&-\mathfrak{a}_2\left(S+U^{(N)}\right)\times \left[\left(U^{(N)}\times\Delta U^{(N)}-R^{(N)}\right)\times\Delta S\right]\notag\\
&-\mathfrak{a}_2\left(S+U^{(N)}\right)\times\left[\left(S+U^{(N)}\right)\times\Delta g\right]\\ &-\mathfrak{a}_2g\times\left[\left(S+U^{(N)}\right)\times\Delta U^{(N)}\right]\notag\\
&-\mathfrak{a}_2S\times\left[\left(U^{(N)}\times\Delta U^{(N)}-R^{(N)}\right)\times\Delta U^{(N)}\right] -\mathfrak{a}_2S\times\left(g\times\Delta U^{(N)}\right)\notag\\
&-\mathfrak{a}_2S\times\left[\left(S+U^{(N)}\right)\times\Delta\left(U^{(N)}\times\Delta U^{(N)}-R^{(N)}\right)\right]\notag\\
&-\mathfrak{a}_2\left(U^{(N)}\times\Delta U^{(N)}-R^{(N)}\right)\times\left(S+\Delta U^{(N)}\right)\notag\\ &-\mathfrak{a}_2U^{(N)}\times \left(g\times\Delta U^{(N)}\right)\notag\\
&-\mathfrak{a}_2U^{(N)}\times\left[S\times\Delta\left(U^{(N)}\times\Delta U^{(N)}-R^{(N)}\right)\right] +t^{2+4\nu}r_t^{(N)}.\notag
\end{align}
Thus,
\begin{align}\label{E3.20}
t^{1+2\nu}\frac{d}{dt}J_3(t) =&(2+4\nu)t^{2\nu}\|\nabla g\|_{L^2}^2\\ &+\left(\frac{1}{2}+\nu\right)t^{2\nu}\int\left(2\kappa-\rho\kappa'\right) |g|^2dy +\sum_{i=16}^{32}\mathcal{E}_i,\notag
\end{align}
where
\begin{align*}
\mathcal{E}_{16}=&-2\mathfrak{a}_1\int\left(g\times\Delta\chi^{(N)},\Delta g\right)dy +2\mathfrak{a}_1\int\kappa\left(\chi^{(N)}\times\Delta g,g\right)dy,\\
\mathcal{E}_{17}=&-2\mathfrak{a}_1\int\left(\left(U^{(N)}\times\Delta U^{(N)} -R^{(N)}\right) \times\Delta S, \Delta g\right)dy\\
&+2\mathfrak{a}_1\int\left(\Delta\left(U^{(N)}\times\Delta U^{(N)}-R^{(N)}\right)\times S,\Delta g\right)dy\\
&+2\mathfrak{a}_1\int\kappa\left(\left(U^{(N)}\times\Delta U^{(N)}-R^{(N)}\right)\times\Delta S, g\right)dy\\
&-2\mathfrak{a}_1\int\kappa\left(\Delta\left(U^{(N)}\times\Delta U^{(N)}-R^{(N)}\right)\times S, g\right)dy,\\
\mathcal{E}_{18}=&-2\mathfrak{a}_1\int\left(g\times\Delta S,\Delta g\right)dy,\\
\mathcal{E}_{19}=&2\mathfrak{a}_1\int\kappa\left(S\times\Delta g, g\right)dy,\\
\mathcal{E}_{20}=&-2t^{2+4\nu}\int(r_t,\Delta g)dy +2t^{2+4\nu}\int\kappa(r_t,g)dy,\\
\mathcal{E}_{21}=&2\mathfrak{a}_2\int\left(\left(U^{(N)}\times\Delta U^{(N)}-R^{(N)}\right)\times \left[\left(S+U^{(N)}\right)\times\Delta S \right], \Delta g\right)dy\\
&-2\mathfrak{a}_2\int\kappa\left(\left(U^{(N)}\times\Delta U^{(N)}-R^{(N)}\right)\times \left[\left(S+U^{(N)}\right)\times\Delta S \right], g\right)dy,\\
\mathcal{E}_{22}=&2\mathfrak{a}_2\int\left(g\times \left[\left(S+U^{(N)}\right)\times\Delta S \right], \Delta g\right)dy,\\
\mathcal{E}_{23}=&2\mathfrak{a}_2\int\left(\left(S+U^{(N)}\right)\times \left(g\times\Delta S\right), \Delta g\right)dy\\
&-2\mathfrak{a}_2\int\kappa\left(\left(S+U^{(N)}\right)\times \left(g\times\Delta S\right), g\right)dy,\\
\mathcal{E}_{24}=&2\mathfrak{a}_2\int\left(\left(S+U^{(N)}\right)\times \left[\left(U^{(N)}\times\Delta U^{(N)}-R^{(N)}\right)\times\Delta S\right], \Delta g\right)dy\\
&-2\mathfrak{a}_2\int\kappa\left(\left(S+U^{(N)}\right)\times \left[\left(U^{(N)}\times\Delta U^{(N)}-R^{(N)}\right)\times\Delta S\right], g\right)dy,\\
\mathcal{E}_{25}=&2\mathfrak{a}_2\int\left(\left(S+U^{(N)}\right)\times\left[\left(S+U^{(N)}\right) \times\Delta g\right], \Delta g\right)dy\\
&-2\mathfrak{a}_2\int\kappa\left(\left(S+U^{(N)}\right)\times\left[\left(S+U^{(N)}\right)\times\Delta g\right], g\right)dy,\\
\mathcal{E}_{26}=&2\mathfrak{a}_2\int\left(g\times\left[\left(S+U^{(N)}\right)\times\Delta U^{(N)}\right], \Delta g\right)dy,\\
\mathcal{E}_{27}=&2\mathfrak{a}_2\int\left(S\times\left[\left(U^{(N)}\times\Delta U^{(N)}-R^{(N)}\right)\times\Delta U^{(N)}\right], \Delta g\right)dy\\
&-2\mathfrak{a}_2\int\kappa\left(S\times\left[\left(U^{(N)}\times\Delta U^{(N)}-R^{(N)}\right)\times\Delta U^{(N)}\right], g\right)dy,\\
\mathcal{E}_{28}=&2\mathfrak{a}_2\int\left(S\times\left(g\times\Delta U^{(N)}\right), \Delta g\right)dy -2\mathfrak{a}_2\int\kappa\left(S\times\left(g\times\Delta U^{(N)}\right), g\right)dy,\\
\mathcal{E}_{29}=&2\mathfrak{a}_2\int\left(S\times\left[\left(S+U^{(N)}\right) \times\Delta\left(U^{(N)}\times\Delta U^{(N)}-R^{(N)}\right)\right], \Delta g\right)dy\\
&-2\mathfrak{a}_2\int\kappa\left(S\times\left[\left(S+U^{(N)}\right) \times\Delta\left(U^{(N)}\times\Delta U^{(N)}-R^{(N)}\right)\right], g\right)dy,\\
\mathcal{E}_{30}=&2\mathfrak{a}_2\int\left(\left(U^{(N)}\times\Delta U^{(N)}-R^{(N)}\right)\times\left(S+\Delta U^{(N)}\right), \Delta g\right)dy\\
&-2\mathfrak{a}_2\int\kappa\left(\left(U^{(N)}\times\Delta U^{(N)}-R^{(N)}\right)\times\left(S+\Delta U^{(N)}\right), g\right)dy,\\
\mathcal{E}_{31}=&2\mathfrak{a}_2\int\left(U^{(N)}\times \left(g\times\Delta U^{(N)}\right), \Delta g\right)dy\\
&-2\mathfrak{a}_2\int\kappa\left(U^{(N)}\times \left(g\times\Delta U^{(N)}\right), g\right)dy,\\
\mathcal{E}_{32}=&2\mathfrak{a}_2\int\left(U^{(N)}\times\left[S\times\Delta\left(U^{(N)}\times\Delta U^{(N)}-R^{(N)}\right)\right], \Delta g\right)dy\\
&-2\mathfrak{a}_2\int\kappa\left(U^{(N)}\times\left[S\times\Delta\left(U^{(N)}\times\Delta U^{(N)}-R^{(N)}\right)\right], g\right)dy.
\end{align*}

For $\mathcal{E}_j$, $j=16,19,20$, if $N$ sufficiently large, then there exists $t_0=t_0(N)>0$ such that for $t\leq t_0$, the following estimates hold:
\begin{equation}\label{E3.21}
\begin{split}
\left|\mathcal{E}_{16}\right|\leq &Ct^{2\nu}\|g\|_{H^1}^2,\\
\left|\mathcal{E}_{19}\right|\leq &C\|g\|_{H^1}^2\|S\|_{H^3}\leq Ct^{2\nu}\|g\|_{H^1}^2,\\
\left|\mathcal{E}_{20}\right|\leq &C\left(t^{2\nu}\|g\|_{H^1}^2+t^{2N+3+4\nu}\right).
\end{split}
\end{equation}

For $\mathcal{E}_{17}$, we have
\begin{align*}
\left|\mathcal{E}_{17}\right|\leq &C\left(\left\|\Delta\chi^{(N)}\right\|_{W^{2,\infty}} +\left\|R^{(N)}\right\|_{H^3}\right) \|g\|_{H^1}\|S\|_{H^3}\\
&+C\left\|\langle y\rangle^{-1}\nabla\Delta^2\chi^{(N)}\right\|_{L^\infty} \|\nabla g\|_{L^2} \|\langle y\rangle S\|_{L^2}.
\end{align*}
Thus,
\begin{equation}\label{E3.22}
\left|\mathcal{E}_{17}\right|\leq Ct^{2\nu}\left(\|g\|_{H^1}\|S\|_{H^3} +\|\nabla g\|_{L^2} \|\langle y\rangle S\|_{L^2}\right).
\end{equation}
Note that
\begin{equation}\label{E3.23}
g=\left(U^{(N)}+S\right)\times\Delta S+ S\times\Delta U^{(N)}+R^{(N)},
\end{equation}
and by the bootstrap hypothesis \eqref{E3.18}, we get
\begin{equation}\label{E3.24}
\begin{split}
&\|g\|_{L^2}\leq C\left(\|S\|_{H^2} +\left\|R^{(N)}\right\|_{L^2}\right),\\
&\left\|\nabla g\right\|_{L^2}\leq C\left(\|S\|_{H^3} +\left\|\nabla R^{(N)}\right\|_{L^2}\right).
\end{split}
\end{equation}
Therefore, combining \eqref{E3.21} and \eqref{E3.22}, we get
\begin{equation}\label{E3.25}
\begin{split}
&\left|\mathcal{E}_{16}\right|+\left|\mathcal{E}_{17}\right| +\left|\mathcal{E}_{19}\right|+\left|\mathcal{E}_{20}\right|\\
\leq& Ct^{2\nu}\left[\|S\|_{H^3}^2 +\left(\|S\|_{H^3}+t^{N+1+2\nu}\right)\left\|\langle y\rangle S\right\|_{L^2}\right] +Ct^{2N+1+4\nu}.
\end{split}
\end{equation}

For $\mathcal{E}_{18}$, note that
\begin{equation}\label{E3.26}
\begin{split}
g\times\Delta S=&\left(U^{(N)}+S,\Delta S\right)\Delta S-|\Delta S|^2\left(U^{(N)}+S\right)\\ &\quad\quad\quad \quad\quad\quad \quad\quad\quad+\left(S\times\Delta U^{(N)}+R^{(N)}\right)\times\Delta S,\\
\Delta g=&\left(U^{(N)}+S\right)\times\Delta^2S+Y,
\end{split}
\end{equation}
where
\begin{align*}
Y=2\left(\partial_j U^{(N)}+\partial_j S\right)\times\Delta\partial_jS +S\times\Delta^2 U^{(N)}+2\partial_jS\times\Delta\partial_jU^{(N)}+\Delta R^{(N)}.
\end{align*}
Thus, $\mathcal{E}_{18}$ can be rewritten as
$$
\mathcal{E}_{18}=\mathcal{E}_{18,1}+\mathcal{E}_{18,2}+\mathcal{E}_{18,3},
$$
where
\begin{align*}
&\mathcal{E}_{18,1}=-2\mathfrak{a}_1\int\left(U^{(N)}+S,\Delta S\right)\left(S,\Delta g\right)dy,\\
&\mathcal{E}_{18,2}=2\mathfrak{a}_1\int|\Delta S|^2\left(U^{N}+S,\Delta g\right)dy =2\mathfrak{a}_1\int|\Delta S|^2\left(U^{(N)}+S,Y\right)dy,\\
&\mathcal{E}_{18,3}=-2\mathfrak{a}_1\int\left(\left(S\times\Delta U^{(N)}+R^{(N)}\right)\times\Delta S, \Delta g\right)dy.
\end{align*}
For $\mathcal{E}_{18,1}$, note that
$$
-\left(U^{(N)}+S,\Delta S\right)=\left(\Delta U^{(N)}, S\right) +2\left(\partial_j U^{(N)},\partial_j S\right) +\left(\partial_jS,\partial_j S\right),
$$
thus,
\begin{align*}
\mathcal{E}_{18,1}=&-2\mathfrak{a}_1\int\left[\left(\Delta U^{(N)}, S\right) +2\left(\partial_j U^{(N)},\partial_j S\right) +\left(\partial_jS,\partial_j S\right)\right]\left(\Delta \partial_kS,\partial_k g\right)dy\\
&-2\mathfrak{a}_1\int\partial_k\left[\left(\Delta U^{(N)}, S\right) +2\left(\partial_j U^{(N)},\partial_j S\right) +\left(\partial_jS,\partial_j S\right)\right]\left(\Delta S,\partial_k g\right)dy,
\end{align*}
which gives
\begin{equation}\label{E3.27}
\left|\mathcal{E}_{18,1}\right|\leq C\|S\|_{H^3}^2\|g\|_{H^1}\leq Ct^{2\nu}\|S\|_{H^3}^2.
\end{equation}
For $\mathcal{E}_{18,2}$, by \eqref{E3.26}, we get
$$
\|Y\|_{L^2}\leq C\left(\|S\|_{H^3}+t^N\right).
$$
Thus,
\begin{equation}\label{E3.28}
\left|\mathcal{E}_{18,2}\right|\leq Ct^{2\nu}\|S\|_{H^3}^2.
\end{equation}
Finally, $\mathcal{E}_{18,3}$ satisfies
\begin{equation}\label{E3.29}
\left|\mathcal{E}_{18,3}\right|\leq C\|g\|_{H^1}\left(\|S\|_{H^3}^2+t^N\|S\|_{H^3}\right) \leq Ct^{2\nu}\|S\|_{H^3}^2 +Ct^{3N}.
\end{equation}
Combining \eqref{E3.27}, \eqref{E3.28} and \eqref{E3.29}, we get
\begin{equation}\label{E3.30}
\left|\mathcal{E}_{18}\right|\leq C\left(t^{2\nu}\|S\|_{H^3}^2 +t^{3N}\right),
\end{equation}

For $\mathcal{E}_{21}$, note that
$$
\left|\left(S+U^{(N)}\right)\times\Delta S\right|^2 =\left|\Delta S\right|^2- \left|\left(S+U^{(N)},\Delta S\right)\right|^2,
$$
\begin{align*}
&\left|\partial_j\left[\left(S+U^{(N)}\right)\times\Delta S\right]\right|^2\\ =&\left|\partial_jS+\partial_jU^{(N)}\right|^2\left|\Delta S\right|^2 +\left|\Delta\partial_j S\right|^2-\left|\left(\partial_jS+\partial_jU^{(N)},\Delta S\right)\right|^2 \\
& -\left|\left(S+U^{(N)},\Delta\partial_j S\right)\right|^2 -2\left(\partial_jS+\partial_jU^{(N)}, \Delta\partial_j S\right)\left(S+U^{(N)}, \Delta S\right),
\end{align*}
and
\begin{equation}\label{SUDSa}
\begin{split}
\left(S+U^{(N)},\Delta S\right) =&-\left(S+U^{(N)},\Delta U^{(N)}\right)- \left|\partial_j U^{(N)}+\partial_jS\right|^2,\\
\left(S+U^{(N)},\Delta \partial_jS\right) =&-\left(\Delta S+\Delta U^{(N)},\partial_jS\right)- \Delta\left(\partial_jU^{(N)},S\right)\\
&-2\left(\partial_kU^{(N)}+\partial_kS,\partial_{jk}^2S\right).
\end{split}
\end{equation}
This combined with \eqref{E3.18} gives
\begin{equation}\label{SUDS}
\begin{split}
&\left\|\left(S+U^{(N)}\right)\times\Delta S\right\|_{H^1} \leq C\|S\|_{H^2},\\
&\left\|\partial_j\left[\left(S+U^{(N)}\right)\times\Delta S\right]\right\|_{H^1} \leq C\|S\|_{H^2}.
\end{split}
\end{equation}
Thus,
\begin{equation}\label{EEE21}
\left|\mathcal{E}_{21}\right|\leq Ct^{2\nu}\|g\|_{H^1}\|S\|_{H^3}.
\end{equation}

For $\mathcal{E}_{22}$, since
\begin{equation}
\begin{split}
g\times\left[\left(U^{(N)}+S\right)\times\Delta S\right]=\left(S\times\Delta U^{(N)}+R^{(N)}\right) \times\left[\left(U^{(N)}+S\right)\times\Delta S\right],
\end{split}
\end{equation}
we obtain that $\mathcal{E}_{22}$ can be written as
$$
\mathcal{E}_{22}=2\mathfrak{a}_2\int\left(\left(S\times\Delta U^{(N)}+R^{(N)}\right) \times\left[\left(U^{(N)}+S\right)\times\Delta S\right], \Delta g\right)dy.
$$
By \eqref{SUDS}, we can obtain the estimate of $\mathcal{E}_{22}$:
\begin{equation}\label{E3.291}
\left|\mathcal{E}_{22}\right|\leq C\|g\|_{H^1}\left(\|S\|_{H^3}^2+t^N\|S\|_{H^3}\right) \leq Ct^{2\nu}\|S\|_{H^3}^2 +Ct^{3N}.
\end{equation}

For $\mathcal{E}_{23}$, by \eqref{E3.26}, we rewrite $\mathcal{E}_{23}$ as
$$
\mathcal{E}_{23}=\mathcal{E}_{23,1}+\mathcal{E}_{23,2}+\mathcal{E}_{23,3},
$$
where
\begin{align*}
&\mathcal{E}_{23,1}=2\mathfrak{a}_2\int\left(\left(S+U^{(N)}\right)\times \Delta S, \Delta g\right)\left(U^{(N)}+S,\Delta S\right)dy,\\
&\mathcal{E}_{23,2}=-2\mathfrak{a}_2\int\left(\left(S+U^{(N)}\right)\times \left[\left(S\times\Delta U^{(N)}+R^{(N)}\right)\times\Delta S\right], \Delta g\right)dy,\\
&\mathcal{E}_{23,3}=-2\mathfrak{a}_2\int\kappa\left(\left(S+U^{(N)}\right)\times \left(g\times\Delta S\right), g\right)dy.
\end{align*}
For $\mathcal{E}_{23,1}$, we have
\begin{align*}
\mathcal{E}_{23,1}=&-2\mathfrak{a}_2\int\left(\left(S+U^{(N)}\right)\times \Delta S, \Delta g\right)\\
&\qquad\qquad \cdot\left[\left(\Delta U^{(N)}, S\right) +2\left(\partial_j U^{(N)},\partial_j S\right) +\left(\partial_jS,\partial_j S\right)\right]dy\\
=&2\mathfrak{a}_2\int\left(\partial_k \left[\left(S+U^{(N)}\right)\times \Delta S\right], \partial_k g\right)\\
&\qquad\qquad \cdot\left[\left(\Delta U^{(N)}, S\right) +2\left(\partial_j U^{(N)},\partial_j S\right) +\left(\partial_jS,\partial_j S\right)\right]dy\\
&+2\mathfrak{a}_2\int\left(\left(S+U^{(N)}\right)\times \Delta S, \partial_k g\right)\\
&\qquad\qquad \cdot\partial_k\left[\left(\Delta U^{(N)}, S\right) +2\left(\partial_j U^{(N)},\partial_j S\right) +\left(\partial_jS,\partial_j S\right)\right]dy.
\end{align*}
Thus
\begin{equation}\label{EEE231}
\left|\mathcal{E}_{23,1}\right|\leq C\|S\|_{H^3}^2\|g\|_{H^1}\leq Ct^{2\nu}\|S\|_{H^3}^2.
\end{equation}
For $\mathcal{E}_{23,2}$, by \eqref{E3.26}, we get
\begin{equation}\label{EEE232}
\left|\mathcal{E}_{23,2}\right|\leq Ct^{2\nu}\|S\|_{H^3}^2.
\end{equation}
Finally, $\mathcal{E}_{23,3}$ can be estimated as
\begin{equation}\label{EEE233}
\left|\mathcal{E}_{23,3}\right|\leq C\|g\|_{H^1}\left(\|S\|_{H^3}^2+t^N\|S\|_{H^3}\right) \leq Ct^{2\nu}\|S\|_{H^3}^2 +Ct^{3N}.
\end{equation}
Combining \eqref{EEE231}, \eqref{EEE232} and \eqref{EEE233}, we obtain
\begin{equation}\label{EEE23}
\left|\mathcal{E}_{23}\right|\leq C\left(t^{2\nu}\|S\|_{H^3}^2 +t^{3N}\right),
\end{equation}

For $\mathcal{E}_{i}$, $i=24,25,\cdots,28,31$, note that
\begin{align*}
\mathcal{E}_{25}=&-2\mathfrak{a}_2\int\kappa\left(\left(S+U^{(N)}\right)\times \left[\left(S+U^{(N)}\right)\times\Delta g\right], g\right)dy,\\
\mathcal{E}_{31}\leq&-2\mathfrak{a}_2\int\kappa\left(U^{(N)}\times \left(g\times\Delta U^{(N)}\right), g\right)dy,
\end{align*}
we get
\begin{equation}\label{EEE242831}
\begin{split}
&\left|\mathcal{E}_{24}\right|\leq C\|g\|_{H^1}\|S\|_{H^3}^2,\\
&\left|\mathcal{E}_{25}, \mathcal{E}_{26}\right|\leq C\|g\|_{H^1}^2\|S\|_{H^3}\leq Ct^{2\nu}\|g\|_{H^1}^2,\\
&\left|\mathcal{E}_{27}\right|\leq Ct^{2\nu} \|g\|_{H^1}\|S\|_{H^3}+Ct^{3N},\\
&\left|\mathcal{E}_{28}\right|\leq Ct^{2\nu} \|g\|_{H^1}^2,\\
&\left|\mathcal{E}_{31}\right| \leq Ct^{2\nu}\|g\|_{L^2}^2.
\end{split}
\end{equation}

For $\mathcal{E}_i$, $i=29,30,32$, we get
\begin{align*}
\left|\mathcal{E}_{29}, \mathcal{E}_{30}, \mathcal{E}_{32}\right| \leq &C\left(\left\|\Delta\chi^{(N)}\right\|_{W^{2,\infty}} +\left\|R^{(N)}\right\|_{H^3}\right) \|g\|_{H^1}\|S\|_{H^3}\\
&+C\left\|\langle y\rangle^{-1}\nabla\Delta^2\chi^{(N)}\right\|_{L^\infty} \|\nabla g\|_{L^2} \|\langle y\rangle S\|_{L^2}.
\end{align*}
Thus,
\begin{equation}\label{EEE293032}
\left|\mathcal{E}_{29}, \mathcal{E}_{30}, \mathcal{E}_{32}\right| \leq Ct^{2\nu}\left(\|g\|_{H^1}\|S\|_{H^3} +\|\nabla g\|_{L^2} \|\langle y\rangle S\|_{L^2}\right).
\end{equation}

Combining \eqref{E3.25}, \eqref{E3.30}, \eqref{EEE21}, \eqref{E3.291}, \eqref{EEE23}, \eqref{EEE242831} and \eqref{EEE293032}, we get
\begin{equation}\label{E3.31}
\left|\frac{d}{dt}J_3(t)\right|\leq C\frac{1}{t} \left[\|S\|_{H^3}^2+ \left(\|S\|_{H^3} +t^{N+1+2\nu}\right) \left\||y|S\right\|_{L^2}\right] +Ct^{2N+2\nu}.
\end{equation}

\subsubsection{The proof Proposition \ref{pro3.1}}
To prove Proposition \ref{pro3.1}, it is sufficient to prove the bootstrap hypotheses \eqref{E3.5} and \eqref{E3.18} imply \eqref{E3.2} and \eqref{E3.3}.

According to the bootstrap hypothesis \eqref{E3.18} and the estimates \eqref{E3.10} and \eqref{E3.16}, we obtain that for any $N$ sufficiently large and $t_0$ sufficiently small,
\begin{equation}\label{E3.32}
\sum_{i=0}^1\left|\frac{d}{dt}J_i(t)\right|\leq C\frac{1}{t} \|S\|_{H^1}^2 +Ct^{2N-2\nu},\quad \forall t\leq t_0.
\end{equation}

Note that for any $c_0>0$ sufficiently large, we have
$$
\|S\|_{H^1}^2\leq J_1+c_0J_0.
$$
Let
$$
J(t)=J_1(t)+c_0J_0(t),
$$
Then \eqref{E3.32} can be written as
\begin{equation}\label{E3.33}
\left|\frac{d}{dt}J(t)\right|\leq C\frac{1}{t} J(t) +Ct^{2N-2\nu}.
\end{equation}
Integrating the two sides of \eqref{E3.33}, where the zero initial condition is satisfied at $t_1$, we obtain that for sufficiently large $N$,
\begin{equation}\label{E3.34}
J(t)\leq \frac{C}{N}t^{2N+1-2\nu},\quad \forall t\in[t_1,t_0].
\end{equation}
Thus,
\begin{equation}\label{E3.35}
\|S\|_{H^1}^2\leq \frac{C}{N}t^{2N+1-2\nu},\quad \forall t\in[t_1,t_0],
\end{equation}

For $\left\||y|S(t)\right\|_{L^2}$, by \eqref{E3.17} and \eqref{E3.35}, we get
\begin{equation}\label{E3.36}
\left|\frac{d}{dt}\left\||y|S(t)\right\|_{L^2}^2\right| \leq C\frac{1}{t}\left(\left\||y|S(t)\right\|_{L^2}^2+ t^{2N+1-6\nu}\right).
\end{equation}
Integrating both sides of \eqref{E3.36}, we obtain that for $N$ sufficiently large,
\begin{equation}\label{E3.37}
\left\||y|S(t)\right\|_{L^2}^2 \leq \frac{C}{N} t^{2N+1-6\nu}, \quad \forall t\in[t_1,t_0],
\end{equation}
thus,
\begin{equation}\label{E3.38}
\left\||x|s(t)\right\|_{L^2}^2 \leq t^{\frac{N}{2}}, \quad \forall t\in[t_1,t_0].
\end{equation}
Next, we consider $\|\nabla\Delta s(t)\|_{L^2(\mathbb{R}^2)}$. By \eqref{E3.23} and \eqref{E3.18}, we obtain that for any $j=1,2$,
\begin{equation}\label{E3.39}
\left\|\partial_jg-\left(U^{(N)}+S\right) \times\Delta\partial_j S\right\|_{L^2} \leq C\left(\|S\|_{H^2(\mathbb{R}^2)}+t^{N+1+2\nu}\right).
\end{equation}
Note that $\left|U^{(N)}+S\right|=1$. Thus,
\begin{align*}
\left|\left(U^{(N)}+S\right)\times\Delta\partial_j S\right|^2 =\left|\Delta\partial_j S\right|^2-\left(U^{(N)}+S,\Delta\partial_jS\right)^2.
\end{align*}
This combined with \eqref{E3.18} and \eqref{SUDSa} gives
\begin{equation}\label{E3.40}
\left\|\Delta\partial_jg\right\|_{L^2}^2 -\left\|\left(U^{(N)}+S\right)\times\Delta\partial_j S\right\|_{L^2}^2 \leq C\|S\|_{H^2(\mathbb{R}^2)}^2.
\end{equation}

Now we consider the functional $\tilde{J}_3(t)=J_3+c_1J_0(t)$. By \eqref{E3.24}, \eqref{E3.39} and \eqref{E3.40}, we obtain that for $c_1>0$ sufficiently large, there exists $c_2>0$ such that
\begin{equation}\label{E3.41}
c_2\|S\|_{H^3}^2 -Ct^{2N+1+2\nu} \leq \tilde{J}_3(t)\leq C\left(\|S\|_{H^3(\mathbb{R}^2)}^2+t^{2N+1+2\nu}\right).
\end{equation}

By \eqref{E3.31}, \eqref{E3.32} and \eqref{E3.37}, we get
\begin{equation}\label{E3.42}
\begin{split}
\left|\frac{d}{dt}\tilde{J}_3(t)\right|\leq &C\left[\frac{1}{t}\left(\|S\|_{H^3{(\mathbb{R}^2)}}^2 +\left\||y|S\right\|_{L^2(\mathbb{R}^2)}\right)+t^{2N-2\nu}\right]\\
\leq &C\frac{1}{t}\tilde{J}_3(t)+ Ct^{2N-6\nu}.
\end{split}
\end{equation}
Integrating the two sides of \eqref{E3.42} with respect to $t$ from $t_1$ to $t$, and noting that
$$
\tilde{J}_3(t_1)=t_1^{2+4\nu} \int\left|\nabla r^{(N)}(x,t_1)\right|^2dx +t_1^{1+2\nu}\int\kappa\left(t^{-\frac{1}{2}+\nu}x\right)\left|r^{(N)}(x,t_1)\right|^2,
$$
we get
$$
\left|\tilde{J}_3(t_1)\right|\leq Ct_1^{2N+1+2\nu}.
$$
Thus,
$$
\tilde{J}_3(t)\leq C t^{2N+1-6\nu},\quad \forall t\in[t_1,t_0].
$$
This combined with \eqref{E3.41} gives
$$
\|S\|_{H^3(\mathbb{R}^2)}^2\leq C t^{2N+1-6\nu},\quad \forall t\in[t_1,t_0],
$$
thus,
$$
\|s\|_{H^3(\mathbb{R}^2)}\leq  t^{\frac{N}{2}},\quad \forall t\in[t_1,t_0].
$$
This completes the proof of Proposition \ref{pro3.1}.
\end{proof}

\subsection{Proof of the main theorem}
Now, we start to prove the main theorem of this paper. Fix $N$ such that Proposition \ref{pro3.1} holds. Select the sequence $\{t^j\}$, $0<t^j<t_0$, satisfying $t^j\rightarrow 0$ as $j\rightarrow\infty$时. Let $u_j(x,t)$ be a solution of the following problem:
\begin{equation}\label{E3.43}
\begin{cases}
&\partial_tu_j= \mathfrak{a}_1u_j\times\Delta u_j-\mathfrak{a}_2u_j\times(u_j\times\Delta u_j),\quad t\geq t^j,\\
&u_j|_{t=t^j}=u^{(N)}(t^j).
\end{cases}
\end{equation}
By Proposition \ref{pro3.1}, for any $j$, there exists $u_j-u^{(N)}\in C([t^j,t_0],H^3)$ satisfying
\begin{equation}\label{E3.44}
\left\|u_j(t)-u^{(N)}(t)\right\|_{H^3}+ \left\|\langle x\rangle\left(u_j(t)-u^{(N)}(t)\right)\right\|_{L^2} \leq2t^{\frac{N}{2}},\quad \forall t\in\left[t^j,t_0\right].
\end{equation}
Thus, the sequence $u_j(t_0)-u^{(N)}(t_0)$ is compactness in $H^2$. This ensures that we can select a subsequence and pass the limit such that $u_j(t_0)-u^{(N)}(t_0)$ converges to some 1-equivariant function $w\in H^3$ in $H^2$, where $\|w\|_{H^3}\leq\delta^{2\nu}$ and $\left|u^{(N)}(t_0)+w\right|=1$.

For the Cauchy problem:
\begin{equation}\label{E3.45}
\begin{cases}
&\partial_tu=\mathfrak{a}_1u\times\Delta u-\mathfrak{a}_2u\times(u\times\Delta u),\quad t\geq t_0,\\
&u|_{t=t_0}=u^{(N)}(t_0)+w,
\end{cases}
\end{equation}
by the classical local well-posedness theory, \eqref{E3.45} exists a unique solution $u\in C((t^*,t_0], \dot{H}^1\cap\dot{H}^3)$ for some $0\leq t^*<t_0$. According to the $H^1$ continuity of the Landau--Lifshitz flow, we have $u_j\rightarrow u$ in $C((t^*,t_0], \dot{H}^1)$. This combined with \eqref{E3.44} gives
\begin{equation}\label{E3.46}
\left\|u(t)-u^{(N)}(t)\right\|_{H^3} \leq2t^{\frac{N}{2}},\quad \forall t\in\left(t^*,t_0\right].
\end{equation}
Thus, $t^*=0$. This combined with Proposition \ref{pro2.1} gives Theorem \ref{MT}.

\section*{Acknowledgements}
This work was supported by National Natural Science Foundation of China (Grant Nos. 12231016 and 12071391) and Guangdong Basic and Applied Basic Research Foundation (Grant No. 2022A1515010860).

\bibliographystyle{amsplain}

\end{document}